\documentclass[11pt,reqno]{amsart}
\usepackage{mathrsfs}
\usepackage[american]{babel}
\usepackage{todonotes}
\usepackage{comment}
\usepackage{amsmath}
\usepackage{dsfont,mathtools,amssymb}
\usepackage{appendix}
\usepackage[usenames,dvipsnames]{xcolor}
\usepackage{nicematrix}
\usepackage[hidelinks]{hyperref}
\mathtoolsset{showonlyrefs,showmanualtags}
\usepackage[shortlabels]{enumitem}
\usepackage[margin=1.2in]{geometry}
\allowdisplaybreaks

\theoremstyle{theorem}
\newtheorem{theorem}{Theorem}[section]
\newtheorem{proposition}[theorem]{Proposition}
\newtheorem{lemma}[theorem]{Lemma}
\newtheorem{corollary}[theorem]{Corollary}
\theoremstyle{remark}
\newtheorem{remark}[theorem]{Remark}
\theoremstyle{definition}
\newtheorem{definition}[theorem]{Definition}

\numberwithin{equation}{section}
\numberwithin{figure}{section}

\renewcommand{\P}{\mathds{P}}

\newcommand{\R}{\mathbb{R}}

\newcommand{\Z}{\mathbb{Z}}
\newcommand{\N}{\mathbb{N}}
\newcommand{\dd}{{\rm d}}
\newcommand{\fstop}{\; \text{.}}
\newcommand{\comma}{\; \text{,}\;\;}

\newcommand{\tonde}[1]{\left(#1\right)}

\newcommand{\sqbra}[1]{\left[#1\right]}
\newcommand{\ttonde}[1]{\big(#1\big)}

\newcommand{\tttonde}[1]{(#1)}
\newcommand{\ttquadre}[1]{[#1]}
\newcommand{\abs}[1]{\left\lvert#1\right\rvert}

\newcommand{\ttabs}[1]{\lvert#1\rvert}

\newcommand{\emparg}{{\,\cdot\,}}

\newcommand{\eqdef}{\coloneqq}
\newcommand{\defeq}{\eqqcolon}
\newcommand{\car}{\mathbf{1}}

\newcommand{\norm}[1]{\left\lVert#1\right\rVert}

\newcommand{\set}[1]{\left\{#1\right\}}							

\newcommand{\ttset}[1]{\{#1\}}
 
\newcommand{\eps}{\varepsilon} 

\newcommand{\Var}{{\mathbb V}{\textsc{ar}}}
\newcommand{\pp}{\mathsf p}

\newcommand{\cB}{\ensuremath{\mathcal B}} 
\newcommand{\cC}{\ensuremath{\mathcal C}} 
\newcommand{\cD}{\ensuremath{\mathcal D}} 
 
\newcommand{\cF}{\ensuremath{\mathcal F}} 
 
\newcommand{\cH}{\ensuremath{\mathcal H}}

\newcommand{\cL}{\ensuremath{\mathcal L}} 
\newcommand{\cM}{\ensuremath{\mathcal M}}

\newcommand{\cR}{\ensuremath{\mathcal R}} 
\newcommand{\cS}{\ensuremath{\mathcal S}} 
 
\newcommand{\cU}{\ensuremath{\mathcal U}} 
\newcommand{\cV}{\ensuremath{\mathcal V}} 
\newcommand{\cW}{\ensuremath{\mathcal W}} 
\newcommand{\cX}{\ensuremath{\mathcal X}} 
\newcommand{\cY}{\ensuremath{\mathcal Y}} 
\newcommand{\cZ}{\ensuremath{\mathcal Z}}

\newcommand{\x}{\mathbf x}

\newcommand{\y}{\mathbf y}

\newcommand{\Q}{{\mathbb Q}}

\newcommand{\z}{{\mathbf z}}
\renewcommand{\u}{{\mathbf u}}

\renewcommand{\P}{\mathbb{P}}	

\newcommand{\E}{\mathbb{E}}

\newcommand{\mE}{\mathbf{E}}

\renewcommand{\hat}{\widehat}

\newcommand{\ind}{\mathbf{1}}

\title[The critical KPZ scale for the averaging process]{The critical KPZ scale for the averaging process}
\author{Hindy Drillick}
\address{New York University, Courant Institute of Mathematical Sciences, New York, NY, USA}
\email{hindy.drillick@nyu.edu}
\author{Shalin Parekh}
\address{University of Maine, Department of Mathematics, Orono, ME, USA}
\email{shalin.parekh@maine.edu}
\author{Federico Sau}
\address{University of Milan\\
	 Department of Mathematics \textquotedblleft Federigo Enriques\textquotedblright\\
	 Milan, Italy}
\email{federico.sau@unimi.it}

\begin{document}

	\maketitle
	
	\begin{abstract}
			KPZ-type extremal fluctuations have recently been proved for several models of random walks in space-time random environments (RWRE) in $1+1$ dimensions. A general moment criterion predicts the spatial scale at which this behavior should occur, but does not by itself guarantee non-trivial fluctuations at that scale.
			
			In this paper, we show that the averaging process provides an instance in which this criterion is not sharp: because of a degeneracy in the update mechanism, the actual KPZ limit appears only beyond the predicted scale. The relevant critical contribution arises from the interplay of two distinct fluctuation mechanisms, a phenomenon that appears to be rather special within the RWRE setting. 
            
            Our proof builds on Dobrushin-type local-time limit theorems for zero-sum additive functionals, refined estimates for tilted $k$-point motions, and the recent moment-based axiomatic characterization of Cole--Hopf solutions to the one-dimensional KPZ equation. We also identify the behavior on the two sides of the critical scale, thereby sharply separating the subcritical, critical, and supercritical regimes of the model.
		\end{abstract}
		\thispagestyle{empty}

\section{Introduction, model and main result}\label{sec:intro}

Let $(S_t)_{t\ge0}$ be a random walk on $\Z$ in a space-time random
environment (RWRE). We take the time variable to be continuous, and we assume only nearest-neighbor jumps. Given a realization $\omega$ of the environment, let
\begin{equation}
	P^\omega(t,x)
	\eqdef
	\mathsf P^\omega(S_t=x\mid S_0=0)\comma
	\qquad t\ge 0\comma x\in\Z\comma
\end{equation}
denote its quenched transition probability. Assume that the environment is
stationary under space-time shifts and independent over disjoint time intervals, so
that the annealed transition probabilities form a deterministic, translation-invariant
Markov semigroup. To fix ideas, we take this semigroup to be that of
 the continuous-time simple symmetric random walk (CTRW) $(R_t)_{t\ge0}$ on $\Z$ with
total jump rate one. Hence, letting $\P$ and $\E$ denote probability and expectation
with respect to the environment, we have
\begin{equation}\label{eq:quenched-annealed}
	\E[P^\omega(t,x)]=\pp_t(x)\comma
\end{equation}
where $\pp_t(x):=\mathsf P(R_t=x\mid R_0=0)$.
The identity in \eqref{eq:quenched-annealed} suggests the form
\begin{equation}\label{eq:P=pexp}
	P^\omega(t,x)
	=
	\pp_t(x)\,e^{H(t,x)}\comma
\end{equation}
where $H(t,x) \in [-\infty,\infty) $ is a model-dependent random correction, and normalized by
\begin{equation}\label{eq:normalization}
	\E[e^{H(t,x)}]=1\fstop
\end{equation}

Limit theorems for $P^\omega(t,x)$ depend both on the scaling relation between
$t$ and $x=x(t)$, and on the corresponding asymptotics of the annealed kernel
$\pp_t(x)$ and the quenched correction $e^{H(t,x)}$. The standard asymptotics of the simple CTRW
kernel $\pp_t(x)$ lead to the classical trichotomy between the \textit{bulk} or \textit{central limit theorem (CLT)}
regime, the \textit{large-deviation} regime, and the \textit{moderate-deviation} regime interpolating between them. These correspond, respectively, to
\begin{equation}\label{eq:thresholds-annealed}
	|x| \lesssim t^{1/2}\comma
	\qquad
	|x|\asymp t\comma
	\qquad
	|x|\asymp t^\lambda\comma \lambda \in (1/2,1)\fstop
\end{equation}
This transition is naturally captured by a Cramér's exponential tilting, which
removes the cost of reaching the point under consideration.

Fix $\lambda\le 1$ and set, for all $N\in \N$,
\begin{equation}\label{eq:beta-lambda-N}
	\beta_{\lambda,N}
	\eqdef
	\operatorname{arcsinh}(N^{\lambda-1})\comma
\end{equation}
so that $N\sinh(\beta_{\lambda,N})=N^\lambda$.
Letting $\lfloor a \rfloor\in \Z$ denote the floor of $a \in \R$, define 
\begin{equation}\label{eq:D-tilt}
	\hat\vartheta_{\lambda,N}(t,z)
	\eqdef
	\exp\set{
		\beta_{\lambda,N}\lfloor tN^\lambda+zN^{1/2}\rfloor	
		-
		tN(\cosh(\beta_{\lambda,N})-1)
	}\comma
	\qquad t>0\comma z\in \R\fstop
\end{equation}
This is the exponential tilt which makes the point
$\lfloor tN^\lambda+zN^{1/2}\rfloor \in \Z$ diffusive for the tilted annealed walk. Indeed, the tilted kernel
\begin{equation}
	p_{\lambda,N}(tN,y)
	\eqdef
	\exp\set{
		\beta_{\lambda,N}y
		-
		tN(\cosh(\beta_{\lambda,N})-1)
	}
	\pp_{tN}(y)\comma\qquad t\ge 0\comma y \in \Z\comma
\end{equation}
has mean $tN\sinh(\beta_{\lambda,N})=tN^\lambda$ and variance
$tN\cosh(\beta_{\lambda,N})$. Therefore,  the tilted local CLT gives, as $N\to \infty$,
\begin{equation}\label{eq:tilted-LCLT-general}
	N^{1/2}\,\hat\vartheta_{\lambda,N}(t,z)\,
	\pp_{tN}(\lfloor tN^\lambda+zN^{1/2}\rfloor)
	=
	\frac{1+o(1)}{\sqrt{2\pi t\cosh(\beta_{\lambda,N})}}
	\exp\set{
		-\frac{z^2}{2t\cosh(\beta_{\lambda,N})}
	}\fstop
\end{equation}
In particular, if $\lambda<1$, then $\beta_{\lambda,N}\sim N^{\lambda-1}$, and hence
\begin{equation}\label{eq:tilt-CLT}
	N^{1/2}\,\hat\vartheta_{\lambda,N}(t,z)\,
	\pp_{tN}(\lfloor tN^\lambda+zN^{1/2}\rfloor)
	\xlongrightarrow[N\to \infty]{}
	\frac{e^{-z^2/2t}}{\sqrt{2\pi t}}\defeq \mathsf g(t,z)\fstop
\end{equation}
Finally, observe that the cost $\log \hat\vartheta_{\lambda,N}(t,z)$ diverges as $N\to \infty$ if and only if $\lambda>1/2$.

\subsection{Fluctuation regimes}\label{sec:fluctuation-regimes}

The asymptotics of the random factor $e^{H(t,x)}$ in \eqref{eq:P=pexp} are subtler, and need not be organized by the same thresholds in \eqref{eq:thresholds-annealed} for the annealed kernel $\pp_t(x)$.

Following 	 \cite{thierry_ledoussal_exact_2016,ledoussal_thiery_diffusion_2017,barraquand_ledoussal_moderate,parekh2024hierarchy,drillick_parekh_random_2025}, let
$
	\lambda_c=\lambda_c(\P)\in(1/2,1)
$
be the conjectural unique critical exponent for the random-environment correction. More precisely, $\lambda_c$ should separate the perturbative regime
\begin{equation}
	e^{H(t,x)}\sim 1\comma
\end{equation}
from the non-perturbative one, where the full exponential correction survives. The heuristic picture of this phase transition is the following.

\smallskip \noindent
{\em Subcritical regime.} For $\lambda<\lambda_c$,  the random correction is thought of as perturbative. Hence, as $N\to \infty$, the linearization
\begin{equation}\label{eq:linearization-heuristics}
	e^{H(tN,\lfloor tN^\lambda +zN^{1/2}\rfloor)}\sim 1+H(tN,\lfloor tN^\lambda +zN^{1/2}\rfloor)\comma\qquad t>0\comma z \in \R\comma
\end{equation}
is somehow justified. In this regime, one expects additive Gaussian fluctuations around the tilted annealed profile: for a suitable centered Gaussian field $\cW_\lambda$ and some exponent $\chi_\lambda>0$, 
\begin{equation}\label{eq:conv-subcritical}
	N^{\chi_\lambda}
	\set{
		N^{1/2}\hat\vartheta_{\lambda,N}(t,z)\,
		P^\omega(tN, \lfloor tN^\lambda +zN^{1/2}\rfloor)
		-
		\mathsf g(t,z)
	}
	\xLongrightarrow[N\to \infty]{}
	\cW_\lambda(t,z)\comma
\end{equation}
where $\hat\vartheta_{\lambda,N}(t,z)$ is the annealed exponential tilt given in \eqref{eq:D-tilt}, which satisfies, for all $\lambda<1$, the tilted local CLT in \eqref{eq:tilt-CLT}.

As in \cite{drillick_parekh_random_2025}, the convergence in \eqref{eq:conv-subcritical} is meant in law as random distributions. Further, $\cW_\lambda$ is expected to solve an additive stochastic heat equation (SHE), whose nature may depend on $\lambda<\lambda_c$: for $\lambda\le1/2$,  the noise  is typically conservative; for $\lambda>1/2$, one expects a non-conservative additive-noise limit.

\smallskip \noindent
{\em Critical regime.}
At the critical scale $\lambda=\lambda_c$, the noise starts to become relevant:
\begin{equation}\label{eq:W-O(1)}
	H(tN,\lfloor tN^{\lambda_c}+zN^{1/2}\rfloor)\asymp 1\comma
	\qquad t>0\comma z\in\R\fstop
\end{equation}
Hence, the linearization in \eqref{eq:linearization-heuristics} is no longer justified, and the full exponential correction survives. As in \cite{das_drillick_parekh_multiplicative_2026,parekh2024hierarchy}, without centering, the expected limit remains stochastic, but it is 	multiplicative rather than additive:
\begin{equation}
	N^{1/2}\hat\vartheta_{\lambda_c,N}(t,z)\,
	P^\omega(tN,\lfloor tN^{\lambda_c}+zN^{1/2}\rfloor)
	\xLongrightarrow[N\to \infty]{}
	\cY(t,z)\comma
\end{equation}
where $\cY$ solves the {multiplicative} SHE
\begin{equation}\label{eq:mSHE}
	\partial_t\cY(t,z)
	=
	\tfrac12\partial_z^2\cY(t,z)
	+
	\gamma\,\cY(t,z)\,\xi(t,z)\comma
	\qquad
	\cY(0,\cdot)=\delta_0\comma
\end{equation}
for some noise coefficient $\gamma\ge0$ and space-time white noise $\xi$.
Equivalently, $\cH\eqdef \log \cY$
 is the Cole-Hopf solution to the Kardar--Parisi--Zhang (KPZ) equation \cite{kardar_parisi_zhang_dynamic_1986} 
\begin{equation}\label{eq:kpz}
	\partial_t \cH(t,z) = \tfrac12\partial_z^2 \cH(t,z) + \tfrac12\tttonde{\partial_z \cH(t,z)}^2+\gamma\,\xi(t,z)
\end{equation}
with narrow-wedge initial condition.

If the coefficient $\gamma$ in \eqref{eq:mSHE} is non-degenerate, i.e., strictly positive, then we can view $\lambda_c$ as the \textit{true} critical exponent at which the multiplicative SHE (or KPZ equation) arises, turning the heuristic relation in \eqref{eq:W-O(1)} into a robust definition of the critical regime.

\smallskip \noindent
{\em Supercritical regime.}
If $\lambda>\lambda_c$, then the noise term in \eqref{eq:P=pexp} is expected to be no longer perturbative, exhibiting large fluctuations as $N\to\infty$. However, by the normalization in \eqref{eq:normalization}, typical environments cannot carry a positive exponential growth, as the expectation is fixed at one. Rather, the expectation is sustained by large, though rare, random values of $H(tN,\lfloor tN^\lambda +zN^{1/2}\rfloor)$. This suggests the emergence of intermittent behavior.

Accordingly, in this regime it is more natural to write
\begin{equation}\label{eq:supercritical-W-decomposition}
	H(tN,\lfloor tN^\lambda +zN^{1/2}\rfloor)
	=
	-j_{\lambda,N}(t,z)
	+
	V_{\lambda,N}(t,z)\comma
	\qquad t>0\comma z\in\R\comma
\end{equation}
where $j_{\lambda,N}(t,z)$ is a positive and diverging deterministic centering term, while $V_{\lambda,N}(t,z)$ is random, expected to typically exhibit smaller fluctuations.

This regime remains mostly conjectural. Besides $V_{\lambda,N}$ being typically smaller than $j_{\lambda,N}$, we conjecture
\begin{equation}\label{eq:j}
	j_{\lambda,N}(t,z)\asymp t N^{\frac{\lambda-\lambda_c}{1-\lambda_c}}\comma\qquad \lambda \in [\lambda_c,1]\fstop
\end{equation}
Together with
 $
 	\log \pp_{tN}(tN^\lambda+zN^{1/2})\asymp -t N^{2\lambda-1}$ and the decomposition
 	\begin{align}
 		\log P^\omega(tN,\lfloor tN^\lambda +zN^{1/2}\rfloor)
 		&=
 		\log \pp_{tN}(\lfloor N^\lambda t+zN^{1/2}\rfloor)
 		-
 		j_{\lambda,N}(t,z)
 		+
 		V_{\lambda,N}(t,z)\comma
 	\end{align}
 the asymptotics in \eqref{eq:j} would imply that, a.s.,  
 \begin{equation}\label{eq:log-1st}
 \log P^\omega(tN,\lfloor tN^\lambda+zN^{1/2}\rfloor)\sim \log \pp_{tN}(\lfloor tN^\lambda+zN^{1/2}\rfloor)\comma\qquad \lambda <1\fstop	
 \end{equation}
 
 Remarkably,   \cite{yilmaz_large_2009,yilmaz_differing_2010} proved that the first-order approximation in \eqref{eq:log-1st} is \textit{not} valid for a large class of RWREs when $\lambda=1$. Rigorous fluctuation limit theorems are still scarce. At the large-deviation scale $\lambda=1$, Tracy--Widom type fluctuations have so far been proved only for certain exactly solvable RWRE models \cite{barraquand_corwin_random_2017,barraquand_large_2020}. Let us also mention the recent work \cite{groisman2026universality}, which, in the different setting of static two-dimensional RWRE, establishes GUE Tracy--Widom fluctuations for quenched large deviations in directions close to a coordinate axis, for a broad class of i.i.d.\ elliptic environments.

\subsection{Critical exponent}
For a class of one-dimensional RWREs with random quenched first moment,  \cite{thierry_ledoussal_exact_2016,ledoussal_thiery_diffusion_2017,barraquand_ledoussal_moderate,das_drillick_parekh_multiplicative_2026} identify the critical exponent
\begin{equation}
	\lambda_c=3/4\fstop
\end{equation}

However, as first considered in \cite{hass2025super} in a discrete-time setting (i.e., the environment law is stationary and independent across distinct integer time intervals), there are many other models whose quenched first moment is deterministic, and it is only the higher quenched moments that are truly random. Following the predictions made in \cite{hass2025super}, the recent works  \cite{parekh2024hierarchy,drillick_parekh_random_2025} establish a  sufficient condition for identifying candidate critical exponents in a large class of discrete-time RWRE and stochastic-flow models. Suppose that the first $p-1$  moments of the random kernel $P^\omega(t,x)$ are deterministic, while the $p$-th  moment is genuinely random, that is, for all integer times $t>0$,
\begin{equation}\label{eq:random-moments}
\Var\big(\textstyle{\sum_{x\in \Z}}\, P^\omega(t,x)\,x^q\big)=0\comma \text{for all}\  1\le q< p \comma
\qquad
\Var\big(\textstyle{\sum_{x\in \Z}}\,P^\omega(t,x)\,x^p\big)>0\comma
\end{equation}
where the variance is taken with respect to the environment law $\P$.  Then, the candidate critical exponent is
\begin{equation}
	\lambda_{c,p}:=1-(4p)^{-1}\fstop
\end{equation}
Under suitable mixing and irreducibility assumptions, these works show that
\begin{equation}
	\lambda_c\ge \lambda_{c,p}\fstop
\end{equation}
More precisely, for all $\lambda <\lambda_{c,p}$, 
one is in a subcritical Gaussian regime \cite{drillick_parekh_random_2025}, while at
$\lambda=\lambda_{c,p}$, 
the tilted density field converges---no recentering is required---to the multiplicative SHE, with an explicit noise coefficient $\gamma$ \cite{parekh2024hierarchy}. This framework applies to many models, including generalized RWREs with spatially correlated kernels, random landscape models, diffusion in random media, sticky Brownian motions, and the Kipnis--Marchioro--Presutti (KMP) model; see \cite{barraquand2025convergence} for the application of this framework to the KMP model.

However, the validity of the general theory outlined so far does not guarantee that the multiplicative SHE limit at $\lambda=\lambda_{c,p}$ is non-degenerate: one may have $\gamma=0$. This leaves open the  question:
\begin{equation}
	\text{Is it always true that}\  \lambda_c=\lambda_{c,p}\ 
	\text{for the suitable}\  p\in \N\ \text{?}
\end{equation}
We answer this question in the negative. More specifically, our work studies the first example of an RWRE for which the genuinely random moment is the first one, but the actual critical exponent is
\begin{equation}
	\lambda_c=\lambda_{c,2}=\frac78>\lambda_{c,1}=\frac34\comma
\end{equation}
and describes its critical picture.

\subsection{Averaging process}\label{sec:averaging-process}

The model we analyze is the \textit{averaging process}. This process has received growing attention in the recent literature, both as a gossip algorithm and as an interacting particle system,   for which convergence rates to equilibrium and scaling limits have been investigated; see, e.g., \cite{boyd_et_al_randomized_2006,aldous_lecture_2012,chatterjee2020phase,quattropani2021mixing,caputo_quattropani_sau_cutoff_2023,movassagh_repeated2022,sau_concentration_2024,sau_tiny_2024,gantert_vilkas_averaging_2025,elboim_peres_peretz_edge_2025,campos_hydrodynamic_2025}.

Its description is simple. Attach to every unoriented nearest-neighbor bond $\{x,x+1\}$ of $\Z$ an independent Poisson clock of rate one. The state of the process at time $t$ is a mass profile
\begin{equation}
	\eta_t=(\eta_t(x))_{x\in\mathbb Z}\in [0,\infty)^{\mathbb Z}\fstop
\end{equation}
When the clock on the bond $\{x,x+1\}$ rings, the two masses $\eta(x)$ and $\eta(x+1)$ are replaced by their average:
\begin{equation}
	\eta(x)\longmapsto \frac{\eta(x)+\eta(x+1)}{2}\comma
	\qquad
	\eta(x+1)\longmapsto \frac{\eta(x)+\eta(x+1)}{2}\fstop
\end{equation}
All other coordinates are left unchanged.  Thus, the redistribution rule itself is deterministic; the only randomness in
the dynamics is the space-time set of bonds selected by the Poisson clocks.

A key observation is that the averaging process can be viewed as a random walk in a space-time random environment and hence can be studied using the framework described above. We start the process from a unit mass at the origin,
\begin{equation}\label{eq:IC-eta}
	\eta_0(x)=\car_{\{x=0\}}\fstop
\end{equation}
Then $\eta_t(x)$ may be viewed as a quenched transition probability in a random dynamic environment: conditionally on the Poisson clocks, a particle can cross a bond only when the corresponding clock rings, and at each such mark it chooses with probability $1/2$ whether to cross. In this representation,
\begin{equation}\label{eq:eta-P}
	\eta_t(x)=P^\omega(t,x)\comma\qquad t\ge 0\comma x \in \Z\fstop
\end{equation}
The annealed law is the CTRW from \eqref{eq:quenched-annealed}; thus,
\begin{equation}\label{eq:eta-E}
	\mathbb E[\eta_t(x)]=\pp_t(x)\fstop
\end{equation}

At first glance, this model should fall under the $p=1$ theory, since the quenched first moment is random, see \eqref{eq:random-moments}. Indeed, for every $t>0$, $\Var(\sum_{x\in \Z} \eta_t(x)\,x)>0$ as, on the one hand, the annealed first moment---the mean displacement of the symmetric CTRW on $\Z$---equals
\begin{equation}
\E\big[\textstyle{\sum_{x\in \Z}}\,\eta_t(x)\,x\big]=\textstyle{\sum_{x\in \Z}}\,\pp_t(x)\,x = 0\comma
\end{equation}
whereas, on the other hand, letting $A_t$ denote the event that exactly one update involving the origin of $\Z$ occurs in the time interval $[0,t]$, 
\begin{equation}
\E\big[\big(\textstyle{\sum_{x\in \Z}}\,\eta_t(x)\,x\big)^2\big]\ge \E\big[\big(\textstyle{\sum_{x\in \Z}}\,\eta_t(x)\,x\big)^2\, \car_{A_t}\big] =   \frac14\,\P(A_t)>0\fstop
\end{equation}
This shows $\lambda_c\ge\lambda_{c,1}=3/4$.  Nevertheless, the multiplicative SHE coefficient at this candidate scale vanishes (see Section \ref{sec:remarks} for further details), thus suggesting $\lambda_c>3/4$. The first non-degenerate critical behavior occurs instead at
\begin{equation}
	\lambda_c=\frac78=\lambda_{c,2}\fstop
\end{equation}
The main goal of this work is to prove this prediction and to sharply identify the critical fluctuation theory for the averaging process.

\subsection{Main results}\label{sec:main-results}

As in Section \ref{sec:fluctuation-regimes}, we state our results at microscopic
times of order $N$, around spatial locations of order $N^\lambda$, and after
diffusive spatial averaging on a window of size $N^{1/2}$. Hence, for $\lambda\in[0,1]$ and $N\in\N$, define the normalized quenched density field associated to the averaging process, recalling \eqref{eq:eta-P}, by
\begin{equation}\label{eq:tilted-field}
	\cY_{\lambda,N}(t,\emparg)
	\eqdef
	\sum_{x\in\Z}
	\vartheta_{\lambda,N}(t,x)\,
	\eta_{tN}(x)\,
	\delta_{N^{-1/2}(x-tN^\lambda)}\comma
	\qquad t\ge 0\comma
\end{equation}
where $\vartheta_{\lambda,N}$ is the deterministic tilt (cf.\ \eqref{eq:D-tilt})
\begin{equation}\label{eq:tilt-micro}
	\vartheta_{\lambda,N}(t,x)\eqdef 
	 \exp\set{\beta_{\lambda,N}x-tN\tonde{\cosh(\beta_{\lambda,N})-1}}\comma\qquad t\ge 0\comma x \in \Z\fstop
\end{equation}
We view $\cY_{\lambda,N}(t,\emparg)$ as an $\mathscr M_f(\R)$-valued random
variable, where $\mathscr M_f(\R)$ is the space of finite Borel measures on $\R$
with the weak topology induced by $\cC_b(\R)$, the space of bounded continuous
functions. The initial condition \eqref{eq:IC-eta} gives,
deterministically,
\begin{equation}
	\cY_{\lambda,N}(0,\emparg)=\delta_0\fstop
\end{equation}

\subsubsection{Critical regime}

Our main result captures, through a full pathwise scaling limit, the averaging process' critical KPZ exponent at
$\lambda_c=\lambda_{c,2}=7/8$. In what follows, for a given Polish space $E$, $\cD([0,\infty);E)$ (resp.\ $\cC([0,\infty);E)$) denotes the space of càdlàg (resp.\ continuous) $E$-valued paths, equipped with the $J_1$-Skorokhod (resp.\ uniform) topology; $\cD([0,\infty);E)$ and $\cC([0,\infty);E)$ are Polish spaces; see, e.g., \cite{billingsley_convergence_1999}. The  space $\mathscr M_f(\R)$ is Polish. Throughout the paper, the symbol $\Longrightarrow$ stands for convergence in law.

\begin{theorem}\label{thm:main}
	Let $\lambda=7/8$. Then, we have the following convergence in law
	\begin{equation}
		\cY_{\lambda,N}
		\xLongrightarrow[N\to\infty]{}
		\cY\comma
		\qquad
		\text{in}\  \cD([0,\infty);\mathscr M_f(\R))\fstop
	\end{equation}
	Here, $\cY=\cY(t,z)$ is the unique continuous It\^o--Walsh solution to the
	multiplicative SHE
	\begin{equation}\label{eq:mSHE-theorem}
		\partial_t\cY(t,z)
		=
		\tfrac12\partial_z^2\cY(t,z)
		+
		{\tfrac{1}{\sqrt 2}}\,\cY(t,z)\,\xi(t,z)\comma
		\qquad
		\cY(0,\emparg)=\delta_0\comma
	\end{equation}
	where $\xi=\xi(t,z)$ is a standard space-time white noise on
	$\R_+\times\R$.
\end{theorem}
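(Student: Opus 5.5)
The plan is to follow the moment-based axiomatic route of \cite{parekh2024hierarchy}. Convergence to the Cole--Hopf solution of \eqref{eq:mSHE-theorem} is identified through (i) tightness of $\cY_{\lambda,N}$ in $\cD([0,\infty);\mathscr M_f(\R))$, and (ii) the requirement that every subsequential limit solves a martingale problem. In that martingale problem, $\cY(t,\phi)-\cY(0,\phi)-\int_0^t\cY(s,\tfrac12\phi'')\,\dd s$ is a martingale, and its quadratic variation is $\tfrac12\int_0^t\int\cY(s,z)^2\phi(z)^2\,\dd z\,\dd s$. The characterization theorem gives uniqueness of this martingale problem together with moment bounds.

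The first step is to write the discrete martingale decomposition. For a test function $\phi$, the tilt $\vartheta_{\lambda,N}$ is chosen so that the annealed generator applied to $x\mapsto\vartheta_{\lambda,N}(t,x)\phi(N^{-1/2}(x-tN^\lambda))$ reproduces $\tfrac12\phi''$ up to vanishing errors. This is just the computation behind \eqref{eq:tilt-CLT}, and it uses the fact that the quenched first moment of the averaging update equals the annealed one in law. The resulting drift is $\cY_{\lambda,N}(s,\tfrac12\phi'')$ plus negligible terms, and the martingale increments come from the Poisson updates. An update on $\{x,x+1\}$ changes the field by $\tfrac12(\eta(x+1)-\eta(x))(f(x)-f(x+1))$, where $f=\vartheta\phi$. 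The predictable bracket is therefore
\begin{equation}
\tfrac14\int_0^{tN}\sum_x(\eta_s(x+1)-\eta_s(x))^2(f(x+1)-f(x))^2\,\dd s.
\end{equation}
Since $f(x+1)-f(x)\approx\beta_{\lambda,N}f(x)\approx N^{\lambda-1}f(x)$, the bracket carries the factor $N^{2\lambda-2}$ times a local \emph{gradient-squared} functional of $\eta$. This is exactly the degeneracy. At $\lambda=3/4$ a naive replacement $(\eta(x+1)-\eta(x))^2\approx c\,\eta(x)^2$ fails because the gradient is small: averaging makes neighbouring masses nearly equal. The effective coefficient must instead come from a second-order mechanism, namely fluctuations of the gradient produced by the random update positions combined with the $N^{-1/2}$ spatial derivative of $f$ at the next order. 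This pushes the scale to $7/8$.

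The central step, and the one I expect to be the main obstacle, is a replacement lemma. It should show that, in $L^1$ and integrated against test functions over time,
\begin{equation}
N^{2\lambda-2}\cdot N\int_0^{t}\sum_x(\eta_{sN}(x+1)-\eta_{sN}(x))^2\vartheta^2\phi^2\,\dd s\approx 2\int_0^t\int\cY_{\lambda,N}(s,z)^2\phi(z)^2\,\dd z\,\dd s,
\end{equation}
with the correct constant, which yields $\gamma^2=1/2$. I would attack it through the two-point motion of the environment: the pair of quenched walks in the same environment is a Markov chain whose difference process is a random walk perturbed by a sticky interaction at $0$ and $\pm1$. The gradient-squared functional is then a zero-sum additive functional of the difference process evaluated near the origin. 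Dobrushin-type local-time limit theorems for such functionals should convert its time integral into $c\cdot$(local time at $0$), which in turn matches $\int\cY^2$. The tilt changes the two-point motion into a tilted chain, so I would need uniform tilted $k$-point estimates; the ones for $k\le 4$ suffice to control the $L^2$ error in the replacement. The delicate part is identifying the constant, because the leading order cancels at scale $3/4$ and one must expand to the next order in $\beta_{\lambda,N}$ and $N^{-1/2}$.

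Finally, tightness follows from Kolmogorov-type bounds on $\E[\cY_{\lambda,N}(t,\phi)^{k}]$, computed via the tilted $k$-point motions and compared with the Brownian $k$-point moments with delta interaction. Uniform bounds on the local times give moment bounds of every order, which makes the martingales uniformly integrable. Limit points are continuous because the jumps are $O(N^{\lambda-1-1/2})$. Passing to the limit in the martingale problem and invoking the axiomatic characterization of \cite{parekh2024hierarchy} then identifies the limit uniquely as $\cY$ and proves the theorem.
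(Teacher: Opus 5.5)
\emph{Verdict: genuine gap.} Your architecture is a legitimate alternative to the paper's route: tightness plus a martingale problem whose bracket is $\tfrac12\int_0^t\int\cY^2\phi^2$. The replacement identity you aim for, with the constant $2$, is in fact true. The gap is that the mechanism you propose for it does not produce it, and this step carries the whole content of the theorem.

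Set $\beta=\beta_{\lambda,N}$ and $u\eqdef\vartheta_{\lambda,N}\eta$, and use $\vartheta_{\lambda,N}(t,x+1)=e^{\beta}\vartheta_{\lambda,N}(t,x)$. Then $\sum_x\vartheta_{\lambda,N}^2(\nabla\eta)^2=\sum_{x,y}u(x)u(y)F_\beta(x-y)$, with $F_\beta=(1-e^{-\beta})^2\Phi_2+2e^{-\beta}\Phi_1$ in the notation of Lemma~\ref{lem:two-point-discrepancy}.
\begin{itemize}
\item The gap functional is therefore \emph{not} zero-sum. Its mass $\approx\beta^2$ (the locally flat slope $\nabla\eta\approx-\beta\eta$) gives, via the ordinary local-time theorem, only half of the target, i.e.\ $\gamma^2=1/4$.
\item The other half is the term $2e^{-\beta}\Phi_1$, i.e.\ the microscopic fluctuations $e^{-\beta}\sum_x(\nabla u)^2$. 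It cannot come from a Dobrushin theorem turning $2\Phi_1$ into $c\,L_t$. Dobrushin's theorem gives the conditionally centred limit $\sigma(\Phi_1)B_{L_t}$, which contributes nothing on its own.
\item The missing $\tfrac14$ comes from the correlation of this term with the $\Phi_1$-part of the tilt weight $\mathscr E_{\beta,2}$. In the paper this is the identity $\E[e^{aB_L}\mid L]=e^{a^2L/2}$ in Proposition~\ref{pr:second-moment}. In a bracket computation, the Poisson equation $\Phi_1=-\cL^{\rm gap}\car_{\{0\}}$ turns $\tfrac{\beta^2}{2}\int\mathscr E\,\Phi_1(X)$ into $\tfrac{\beta^4}{4}\int\mathscr E\,\car_{\{0\}}(X)$, up to a martingale and boundary terms.
\item The $N^{-1/2}$ derivatives of $\phi$ play no role; they are the negligible terms $\cS_{\lambda,N}$. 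So expanding ``to the next order in $\beta_{\lambda,N}$ and $N^{-1/2}$'' will not locate the constant.
\end{itemize}
A direct quenched proof of your identity is available. An update on $\{x,x+1\}$ changes $\sum_yw(y)\eta(y)^2$ by $\tfrac12(w(x+1)-w(x))(\eta(x)^2-\eta(x+1)^2)-\tfrac14(w(x)+w(x+1))(\nabla\eta(x))^2$. For $w=\vartheta_{\lambda,N}^2\phi_{\lambda,N}^2$ one has $\partial_tw+\tfrac N2\Delta w=2N\cosh\beta(\cosh\beta-1)\,w$ up to lower-order terms. It\^o's formula then gives $\tfrac{N^{2\lambda-1}}4\int_0^t\varGamma_{\lambda,N}(s,\phi)\,\dd s\approx\tfrac{N^{4\lambda-3}}2\int_0^t\sum_x\vartheta_{\lambda,N}^2\eta_{sN}^2\phi_{\lambda,N}^2\,\dd s$, with boundary and martingale errors of lower order. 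At $\lambda=7/8$ this is exactly your bracket with $\gamma^2=1/2$.

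Even granting this, your plan omits further identification steps.
\begin{itemize}
\item \emph{Key estimate.} The quantity $N^{1/2}\sum_x\vartheta_{\lambda,N}^2\eta^2\phi_{\lambda,N}^2$ (your $\int\cY_{\lambda,N}^2\phi^2$) is not a continuous functional of $\cY_{\lambda,N}(s,\emparg)\in\mathscr M_f(\R)$. You need a Bertini--Giacomin type estimate replacing it, uniformly in $N$, by the square of a mesoscopically mollified field, and a proof that limit points have densities. This is a four-point-motion estimate in which the $N^{-1/4}$ fluctuations of all pairs must be handled jointly, as in Theorem~\ref{th:Dobrushin-k>2}.
\item \emph{Uniqueness.} Proposition~\ref{pr:mSHE-characterization} is not a uniqueness theorem for a martingale problem. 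It requires independence over disjoint time intervals, the composition rule, and the exact moment formulas for the two-parameter propagator, none of which your martingale problem provides. You would need a separate uniqueness result for the $\delta_0$-started measure-valued problem.
\item \emph{Tightness.} Kolmogorov-type tightness for these c\`adl\`ag fields needs moment bounds on time increments, i.e.\ higher moments of the gradient bracket. Bounds on $\E[\cY_{\lambda,N}(t,\phi)^k]$ alone do not suffice (cf.\ Remark~\ref{rem:carre-du-champ}).
\end{itemize}
The paper sidesteps all three issues. It never replaces the bracket. Instead it computes annealed moments of the propagators $\cZ^N_{s,t}$ through tilted $k$-point motions, where the $\Phi_1$ term sits in the exponent and the Dobrushin limit feeds $\E[e^{aB_L}\mid L]$. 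It then verifies the axioms of Proposition~\ref{pr:mSHE-characterization}, and obtains tightness from finite-dimensional convergence via Aldous' criteria.
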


\begin{remark}
	The topology in Theorem \ref{thm:main} is natural for the non-interpolated
	fields. For fixed $N$ and $\lambda\le 1$, the paths of
	$t\longmapsto \cY_{\lambda,N}(t,\emparg)$ have jumps, due to the Poisson
	updates of the averaging process, and the spatial profiles are atomic
	measures on $N^{-1/2}\Z$. Thus, without additional time or space
	interpolation/regularization, one cannot expect convergence in a continuous-path or
	continuous-function topology.
\end{remark}

  In words, Theorem \ref{thm:main} states that, at $\lambda=7/8$, the tilted
fields retain non-trivial fluctuations, and converge as processes to a multiplicative SHE. In agreement with the discussion in  Section \ref{sec:fluctuation-regimes}, this  identifies the critical KPZ scale of the averaging process.
We refer to Section \ref{sec:remarks} for further comments on this convergence result, on the exponent $7/8$, and
on the origin of the simple-looking noise coefficient ${1/\sqrt 2}$ in
\eqref{eq:mSHE-theorem}.

\subsubsection{Non-critical regimes}
As a by-product of our analysis, we also locate the two sides of the critical
scale at the level of second moments. This gives a sharp separation between the
subcritical regime, where the tilted field concentrates around its annealed
profile, and the supercritical regime, where an intermittent behavior emerges.

Let us first record the corresponding first-moment behavior. By
\eqref{eq:eta-E}, for every $\lambda\le 1$, $t>0$ and
$\phi\in\cC_b(\R)$,
\begin{equation}
	\E[\cY_{\lambda,N}(t,\phi)]
	=
	\sum_{x\in \Z}
	\vartheta_{\lambda,N}(t,x)\,
	\pp_{tN}(x)\,
	\phi(N^{-1/2}(x-tN^\lambda))\comma
\end{equation}
and the tilted central limit theorem \eqref{eq:tilt-CLT} yields, for every
$\lambda<1$,
\begin{equation}
	\E[\cY_{\lambda,N}(t,\phi)]
	\xrightarrow[N\to\infty]{}
	\int_\R \mathsf g(t,z)\,\phi(z)\,\dd z\fstop
\end{equation}
At the endpoint $\lambda=1$, the tilted CLT in
\eqref{eq:tilted-LCLT-general} still gives a finite limit. Thus, the normalization
keeps the first moment of the field at order one throughout the whole range
$\lambda\le 1$.

The following result identifies, through second-moment asymptotics, the
subcritical and supercritical regimes. Here, $\cC_b^+(\R)\subset \cC_b(\R)$  stands for the subset of nonnegative functions. 

\begin{theorem}\label{th:non-critical}
	Let $\lambda< 1$ with $\lambda \neq 7/8$. Then for every $t>0$ and every 
	$0\neq\phi\in\cC_b^+(\R)$,
	\begin{equation}
		\Var(\cY_{\lambda,N}(t,\phi))
		\xrightarrow[N\to\infty]{}
		\begin{dcases}
			0 &\text{if}\ \lambda<7/8\comma\\
			\infty &\text{if}\ \lambda>7/8\comma
		\end{dcases}
	\end{equation}
	where the variance is taken with respect to the environment law $\P$.
\end{theorem}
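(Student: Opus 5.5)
Throughout, write $\E[\cY_{\lambda,N}(t,\phi)^2]$ via the two-point motion of the averaging process. Conditionally on the Poisson clocks, $\eta_{tN}(x)\eta_{tN}(y)$ is the transition probability of two walkers $(X,Y)$ driven by the same environment. Averaging over clocks gives a Markov chain on $\Z^2$ whose behaviour is as follows.
\begin{itemize}[leftmargin=*]
\item Away from the diagonal and from adjacency, the two coordinates move as independent CTRWs.
\item When $|X-Y|\le 1$, the two walkers share the same bond clock, which produces a correlated, "sticky" interaction.
\end{itemize}
Then
\begin{equation}
\E[\cY_{\lambda,N}(t,\phi)^2]=\mE\Big[\vartheta_{\lambda,N}(t,X_{tN})\vartheta_{\lambda,N}(t,Y_{tN})\,\phi\big(N^{-1/2}(X_{tN}-tN^\lambda)\big)\phi\big(N^{-1/2}(Y_{tN}-tN^\lambda)\big)\Big].
\end{equation}
Since $\vartheta_{\lambda,N}$ is exponential in $x$, we apply a Girsanov-type change of measure, tilting both walkers by $\beta=\beta_{\lambda,N}$. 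Independent tilted walkers contribute exactly the squared mean. The discrepancy is a multiplicative functional $\exp\{\sum_{s\le tN} V_\beta(X_s-Y_s)\}$, where $V_\beta$ is supported near the origin of the difference process. Expanding the generator in $\beta$ gives the following.

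\textbf{First-order term.} The term $V^{(1)}$ at order $\beta$ has zero sum over the invariant measure of the difference chain near $0$. This is the degeneracy that kills the $p=1$ coefficient.

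\textbf{Second-order term.} The surviving term is $\beta^2\, c\,\delta_0$ with $c>0$.

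\textbf{Key step: Dobrushin-type limit theorem.} Under the tilted measure, the difference $D=X-Y$ is a symmetric walk (up to lower-order corrections). The additive functional
\begin{equation}
\beta\sum_{s\le tN}V^{(1)}(D_s)
\end{equation}
is zero-sum. By the Dobrushin-type limit theorem for zero-sum additive functionals, it behaves like $\beta\,N^{1/4}\,W(L_t)$, where $L$ is the local time at $0$ of the rescaled difference and $W$ is an independent Brownian motion. Its exponential moment therefore contributes $\exp\{\tfrac12\beta^2\sigma^2 N^{1/2}L_t\}$. Likewise, the $\beta^2\delta_0$ term contributes $\exp\{c\beta^2N^{1/2}L_t\}$. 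With $\beta\sim N^{\lambda-1}$, both exponents scale as $N^{2\lambda-2}\cdot N^{1/2}$, i.e. $N^{2\lambda-3/2}$, which would give $\lambda=3/4$.

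The true answer $7/8$ must come from exact cancellation of the order-$N^{2\lambda-3/2}$ terms. The Gaussian $\tfrac12\sigma^2$ from the first-order fluctuations cancels the second-order drift $c$ exactly: the first-moment martingale structure forces $\mE$ of the tilted exponential at $\beta^2$ order to vanish, because the pair-walk second moment at scale $3/4$ is trivial. The next non-vanishing contribution is then at order $\beta^4 N^{1/4}\cdot N^{1/4}$, or more precisely at order $\beta^4$ times local time $N^{1/2}$, i.e. $N^{4\lambda-4+1/2}$. This is $O(1)$ exactly when $\lambda=7/8$, consistent with $\lambda_{c,2}=1-1/8$.

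\textbf{Conclusion.} Define $\kappa_N:=\beta^4N^{1/2}\asymp N^{4\lambda-7/2}$. The plan is to show
\begin{equation}
\E[\cY_{\lambda,N}(t,\phi)^2]=\mE\big[\Phi_N\,e^{(\gamma^2+o(1))\kappa_N L_t}\big]+o(1),
\end{equation}
where $\Phi_N$ converges to the product of the two Gaussian test-function weights of independent Brownian motions.
\begin{itemize}[leftmargin=*]
\item If $\lambda<7/8$, then $\kappa_N\to0$. Dominated convergence, using uniform exponential moments of $L_t$, gives that $\E[\cY^2]$ converges to $(\int\mathsf g\phi)^2$, the limit of $\E[\cY]^2$. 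Hence the variance tends to $0$.
\item If $\lambda>7/8$, then $\kappa_N\to\infty$. Since $\phi\ge0$ and $\phi\ne0$, we restrict to an event where both walkers end in a region where $\phi\ge\varepsilon$ and $L_t\ge\delta$. This event has probability bounded below, uniformly in $N$. On it the integrand is at least $\varepsilon^2 e^{c\kappa_N\delta}\to\infty$, while the mean stays bounded, so the variance diverges.
\end{itemize}

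\textbf{Main obstacle.} The hardest step is making the cancellation rigorous: showing that the order-$\beta^2 N^{1/2}$ exponent vanishes identically, not merely in the limit, and controlling error terms of order $\beta^3$ and of cross type uniformly. I would first try to prove an exact identity. The tilted two-point difference chain with potential should be conjugate (via an $h$-transform with $h=1+O(\beta)$ near $0$) to a chain whose potential begins at order $\beta^4$. Once that is in place, Dobrushin/Kasahara local-time theorems give the $\beta^4$ coefficient.
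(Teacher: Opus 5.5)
There is a genuine gap, and it sits exactly at the step you call the main obstacle. You assert the reduction $\E[\cY_{\lambda,N}(t,\phi)^2]=\mE[\Phi_N e^{(\gamma^2+o(1))\kappa_N L_t}]+o(1)$ but do not derive it, and the mechanism you give for it is wrong.

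\textbf{What the two-point discrepancy actually is.} The quantity $e^{-F_\beta}(\cL^{(2)}-\cL^{\otimes 2})e^{F_\beta}$ can be computed exactly. It depends only on the gap $X=R^1-R^2$ and equals $(\cosh\beta-1)\Phi_1(X)+(\cosh\beta-1)^2\Phi_2(X)$, where
\begin{itemize}
\item $\Phi_1=\ind_{\{0\}}-\tfrac12\ind_{\{1\}}-\tfrac12\ind_{\{-1\}}$ has zero sum against the counting measure, which is reversible for the tilted gap chain;
\item $\Phi_2=\ind_{\{0\}}$.
\end{itemize}
This is even in $\beta$. There is no order-$\beta$ term and no ``$\beta^2c\,\delta_0$'' term. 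The zero-sum part is already of order $\beta^2$, and the nonzero-mass part is of order $\beta^4$.

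\textbf{Consequence for the heuristic.} The Dobrushin fluctuation of the $\Phi_1$-part has size $\beta^2N^{1/4}$. Its exponential moment produces $\exp\{\tfrac12\sigma^2(\Phi_1)(\cosh\beta-1)^2N^{1/2}L_t\}$ with $\sigma^2(\Phi_1)=2$. This is the same order $\beta^4N^{1/2}$ as the $\Phi_2$-drift, and the two contributions add ($\tfrac14+\tfrac14=\tfrac12$ at $\lambda=7/8$). Nothing of order $N^{2\lambda-3/2}$ needs to cancel. Your cancellation could not occur as described anyway: $\tfrac12\sigma^2>0$ and $c>0$ multiply rather than cancel. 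The appeal to the ``first-moment martingale structure'' is also circular, since triviality of the second moment below $7/8$ is what you are trying to prove.

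\textbf{Supercritical lower bound.} Your bound $\eps^2e^{c\kappa_N\delta}$ is not valid pathwise for the untransformed exponent. That exponent contains the sign-indefinite term $(\cosh\beta-1)\int_0^{tN}\Phi_1(X_s)\,\dd s$, of size $\asymp\sqrt{\kappa_N}$. You would have to restrict further to $\{|N^{-1/4}\int_0^{tN}\Phi_1(X_s)\,\dd s|\le M\}$, or remove $\Phi_1$ first.

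\textbf{The $h$-transform repairs this exactly.} Write $c=\cosh\beta$. Since $\cL^{\rm gap}_\beta\ind_{\{0\}}=-c\,\Phi_1$, take $h=1+\frac{c-1}{c}\ind_{\{0\}}$, viewed as a function of the gap. Note $h=1+O(\beta^2)$, not $1+O(\beta)$. Then
\[
(c-1)\Phi_1+(c-1)^2\Phi_2+\frac{\cL^{\rm gap}_\beta h}{h}=\frac{2c(c-1)^2}{2c-1}\,\ind_{\{0\}}\ge 0.
\]
The Girsanov density $\frac{h(X_T)}{h(X_0)}\exp\{-\int_0^T\frac{\cL^{\rm gap}_\beta h}{h}(X_s)\,\dd s\}$ then turns the second moment into
\[
\tilde{\mathbf E}_{\mathbf 0}\Big[\frac{h(0)}{h(X_{tN})}\exp\Big\{\frac{2c(c-1)^2}{2c-1}\int_0^{tN}\ind_{\{X_s=0\}}\,\dd s\Big\}\prod_{i=1,2}\phi\big(N^{-1/2}(R^i_{tN}-tN^\lambda)\big)\Big].
\]
Here $\tilde{\mathbf P}$ is the $h$-transformed tilted two-point motion; its rates change only at gaps $0,\pm1$, by factors $1+O(\beta^2)$.

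Since $\frac{2c(c-1)^2}{2c-1}N^{1/2}\sim\frac12N^{4\lambda-7/2}$, this is your $\kappa_N$ with $\gamma^2=\frac12$, and the potential is nonnegative. Both of your bullets then go through, provided you prove two things under $\tilde{\mathbf P}$:
\begin{itemize}
\item an invariance principle;
\item uniform exponential moments of $N^{-1/2}\int_0^{tN}\ind_{\{X_s=0\}}\,\dd s$, as in Lemma~\ref{vbound}.
\end{itemize}

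\textbf{Comparison with the paper.} This repaired route is genuinely different from the paper's proof.
\begin{itemize}
\item For $\lambda<7/8$, the paper does not use the moment representation at all. It bounds $\E\langle\cM_{\lambda,N}(\emparg,1)\rangle$ via the renewal equation for $\E[\varGamma_{\lambda,N}(t,1)]$ and Plancherel; the threshold appears as $N^{4(\lambda-7/8)}$ in Lemma~\ref{lem:q}.
\item For $\lambda>7/8$, the paper uses $\Phi_2\ge 0$ to bound $\mathscr E_{\beta_{\lambda,N},2}$ from below pathwise by a diverging power of $\mathscr E_{\beta_{7/8,N},2}$. It then applies Jensen and the critical limit $\mE[e^{\frac12 L_t}]>1$, treating $\phi\equiv 1$ explicitly.
\end{itemize}
Your route gives one exact identity covering both regimes and general $\phi\in\cC_b^+(\R)$ directly. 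The price is redoing the collision-local-time estimates for the transformed chain.
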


\begin{remark}[Concentration vs.\ intermittency]
As the first moment of the tilted fields remains bounded, the two sides of the threshold correspond, respectively, to the subcritical
and supercritical regimes.
	  For $\lambda<7/8$, the collapse of the variance shows that the tilted fields
	\textit{concentrate} around their annealed profile.
	For $\lambda>7/8$, the divergence of
	the second moment instead signals an \textit{intermittent behavior}: for every $t>0$ and every 
	$0\neq\phi\in\cC_b^+(\R)$, 
	\begin{equation}
		\E[\cY_{\lambda,N}(t,\phi)^{2(1+\eps)}]
		\gg
		\E[\cY_{\lambda,N}(t,\phi)^{1+\eps}]^2\comma\qquad \eps>0\fstop
	\end{equation}
	For the proof of these asymptotics, see, e.g.,  \cite[Remark 1.14]{caravenna2025singularity}.
\end{remark}

The main goal of this paper is the full description of the critical scale. We therefore leave further refinements on these non-critical regimes to future work; see also Sections \ref{sec:fluctuation-regimes} and \ref{sec:questions} for more details and conjectures. 
Here, we only record that, while the supercritical
picture for the averaging process, and more generally for non-exactly solvable
RWRE models, remains largely open, the subcritical fluctuation regime is well understood in view of the recent work \cite{drillick_parekh_random_2025}. Indeed, by an adaptation of the analysis carried out there, one may prove additive-noise scaling limits
for the centered fields. These results are not needed in the proof of
Theorem \ref{thm:main}, and we state them here only as a complement to the
critical picture.

 In the range $\lambda\in(1/2,7/8)$, a
 non-conservative additive SHE arises in the limit: the centered field
 \begin{equation}
 	N^{7/4-2\lambda}
\set{
 	\cY_{\lambda,N}
 	-
 	\E[\cY_{\lambda,N}]
 }
 \end{equation}
 converges, in a suitable distribution-valued path space, to the additive SHE of the form \begin{equation}\label{eq:aSHE-non-cons}
 	\partial_t\cV(t,z)
 	=
 	\tfrac12\partial_z^2\cV(t,z)
 	+
 	\sigma_\lambda\,\mathsf g(t,z)\,\xi(t,z)\comma
 	\qquad
 	\cV(0,\emparg)=0\fstop
 \end{equation}
 Here, $g$ is the Gaussian kernel appearing in \eqref{eq:tilt-CLT},
 $\xi$ is a standard space-time white noise, and $\sigma_\lambda>0$ is a noise coefficient.
 
 At smaller scales, $\lambda\le 1/2$, the subcritical fluctuations are of
 bulk type. This regime should be understood as the full-line analogue of the
 main result in \cite{sau_tiny_2024}. Thus, for $\lambda<1/2$,
 \begin{equation}
 	N^{3/4}
 \set{
 	\cY_{\lambda,N}
 	-
 	\E[\cY_{\lambda,N}]
 } \xLongrightarrow[N\to \infty]{} \cU\comma
 \end{equation}
 where $\cU$ solves a 
 conservative additive SHE of the form (cf.\ the notation in \eqref{eq:aSHE-non-cons})
 \begin{equation}
 	\partial_t \cU(t,z) = \tfrac12\partial_z^2\cU(t,z) + \sigma_\lambda\,\partial_z(\partial_z \mathsf g(t,z)\,\xi(t,z))\comma\qquad \cU(0,\emparg)=0\fstop
 \end{equation}
 At $\lambda=1/2$, the same bulk limit is obtained only after a non-trivial
 shearing; see, for instance, the first item in \cite[Theorem 1.13]{drillick_parekh_random_2025}.

\subsection{Further remarks and technical challenges}\label{sec:remarks}

\subsubsection{Comparison with the ${\rm KMP}$ model}\label{sec:KMP} Let us further comment on the value of the critical exponent. A first hint that
the identity $\lambda_c=\lambda_{c,1}$ should fail for the averaging process
comes from the generalized KMP model studied in \cite{barraquand2025convergence}. Fix $\alpha>0$. In the KMP
model $\eta^{(\alpha)}$ with parameter $\alpha$, each nearest-neighbor bond rings at rate one;
when the bond $\{x,x+1\}$ rings, the total mass
$\eta^{(\alpha)}(x)+\eta^{(\alpha)}(x+1)$ is redistributed according to an independent
${\rm Beta}(\alpha,\alpha)$ random variable $Y\in [0,1]$, namely
\begin{equation}
	(\eta^{(\alpha)}(x),\eta^{(\alpha)}(x+1))
	\longmapsto
	(
	Y(\eta^{(\alpha)}(x)+\eta^{(\alpha)}(x+1)),
	(1-Y)(\eta^{(\alpha)}(x)+\eta^{(\alpha)}(x+1))
	)
	\fstop
\end{equation}
Thus, the averaging process is the degenerate redistribution limit
$\alpha\to\infty$, since
\begin{equation}
	{\rm Beta}(\alpha,\alpha)
	\Longrightarrow
	\delta_{1/2}
\comma
\end{equation}
and hence
$\eta^{(\alpha)}$ yields $\eta$ as $\alpha\to \infty$.
For all  $\alpha>0$, including the limit case $\alpha\to \infty$, the first quenched spatial moment is genuinely
random:
\begin{equation}
	\Var\big(
	\textstyle{\sum_{x\in\mathbb Z}}\, \eta_t^{(\alpha)}(x)\,x
	\big)
	>
	0
	\comma
\end{equation}
so the general moment criterion in \cite{parekh2024hierarchy} predicts the candidate exponent
$\lambda_{c,1}=3/4$. In \cite{barraquand2025convergence}, the corresponding
multiplicative SHE coefficient is computed explicitly and equals
\begin{equation}
	\gamma^{(\alpha)}
	=
	\frac{1}{2\sqrt{\alpha}}
	\fstop
\end{equation}
This coefficient vanishes as $\alpha\to\infty$. Thus, the averaging process
sits exactly at the boundary of the $p=1$ theory: the first quenched moment is
random, but the corresponding KPZ coefficient degenerates in the averaging
limit.

\subsubsection{Local smoothness of the averaging process} 	A more precise guess for the critical exponent is obtained from the martingale
decomposition of the tilted field. Fix $1/2<\lambda<1$, and recall \eqref{eq:tilted-field}. Then, for all smooth test functions $\phi\in \cC_c(\R)$ and $N\in \N$  (see Section \ref{sec:tightness} below for the full derivation), 
\begin{equation}
	\cY_{\lambda,N}(t,\phi)= \cY_{\lambda,N}(0,\phi) + (1+o(1)) \int_0^t \cY_{\lambda,N}(s,\tfrac12\partial_z^2 \phi)\,\dd s + \cM_{\lambda,N}(t,\phi) \comma
\end{equation}
where $(\cM_{\lambda,N}(t,\phi))_{t\ge 0}$ is a martingale with predictable quadratic variation proportional to
\begin{equation}\label{eq:pred-QV-1}
	\langle \cM_{\lambda,N}(\emparg,\phi)\rangle(t) \asymp N^{2\lambda-1} \int_0^t \sum_{z\in N^{-1/2}\Z}\hat\vartheta_{\lambda,N}^2(s,z)\,\phi^2(z)\,\ttquadre{\nabla \eta_{sN}(\lfloor sN^\lambda+zN^{1/2}\rfloor)}^2\, \dd s	\comma
\end{equation}
where $\nabla \eta(x)\eqdef \eta(x+1)-\eta(x)$ denotes the discrete gradient.
 If the field behaved as in a generic $p=1$
model (see, e.g., \cite{das_drillick_parekh_multiplicative_2026}), in which neighboring masses become roughly independent (and, thus, do not cancel out), one would expect 
\begin{equation}\label{eq:wrong-approx}
	|\hat\vartheta_{\lambda,N}(t,z)\nabla \eta_{tN}(\lfloor tN^\lambda+zN^{1/2}\rfloor)|
	\lesssim
	|\hat\vartheta_{\lambda,N}(t,z)\,\eta_{tN}(\lfloor tN^\lambda+zN^{1/2}\rfloor)|
	\asymp
	N^{-1/2}
	\comma
\end{equation}
where the second approximation comes from the fact that, up to the critical scale,  sizes of quenched  and annealed kernels are still comparable, so that the local CLT in \eqref{eq:tilt-CLT} applies. Therefore, \eqref{eq:wrong-approx} would predict 
\begin{equation}
	\langle \cM_{\lambda,N}(\emparg,\phi)\rangle(t)
	\lesssim
	N^{2\lambda-1}
	N^{-1}\sum_{z\in N^{-1/2}\Z}\phi^2(z)	
	\asymp
	N^{2\lambda-3/2}
	\comma
\end{equation}
and the right-hand side becomes order one precisely at $\lambda=3/4$.

The point is that, in the averaging process,  neighboring masses do approximately cancel out,  and quenched gradients fluctuate around the corresponding annealed gradients. This local smoothness of the averaging process, already observed in the bulk
in \cite{sau_concentration_2024,sau_tiny_2024}, turns out to persist up to the critical scale
and yields the sharper approximation (cf.\  \eqref{eq:wrong-approx})
\begin{align}
		|\hat\vartheta_{\lambda,N}(t,z)\nabla \eta_{tN}(\lfloor sN^\lambda+zN^{1/2}\rfloor)|&\asymp 	|\hat\vartheta_{\lambda,N}(t,z)\nabla \pp_{tN}(\lfloor tN^\lambda+zN^{1/2}\rfloor)|\\
		&\asymp N^{\lambda-1}\,\hat\vartheta_{\lambda,N}(t,z)\, \pp_{tN}(\lfloor tN^\lambda+zN^{1/2}\rfloor)\\
		&\asymp N^{\lambda-3/2}\fstop
\end{align}
Plugging this approximation into the right-hand side of \eqref{eq:pred-QV-1} yields
\begin{equation}
	\langle \cM_{\lambda,N}(\emparg,\phi)\rangle(t) \asymp tN^{4\lambda-4}\sum_{z\in N^{-1/2}\Z}\phi^2(z) \asymp N^{4\lambda-7/2}\comma
\end{equation}
predicting $\lambda_c=7/8=\lambda_{c,2}$, as claimed in Theorem \ref{thm:main}.

The previous argument uses a local smoothness property which is special to the
averaging process: each update on $\{x,x+1\}$ replaces the two endpoint values
by their arithmetic mean.  In particular, flat
configurations are fixed by every realization of the dynamics, and gradients are
damped, on average, rather than amplified.

This should be contrasted with generic $p=2$ RWRE models. 
For instance, consider a RWRE which, at each update,
samples a random variable $U\in(0,1)$ and then jumps to the left and to the right
with probability $U/2$, while staying put with probability $1-U$. Its annealed kernel coincides, up to a deterministic time rescaling depending only on $\E[U]$, with $\pp_t(x)$ from \eqref{eq:P=pexp}. Moreover, this model has
zero quenched drift, random second moment, and thus  belongs to the $p=2$ hierarchy.   
 Nevertheless, the quenched kernel is not locally smooth like the averaging process.
Even if the profile is flat, any update creates new microscopic
inhomogeneities proportional to the masses.  Therefore, quenched gradients fluctuate at the same order as the
quenched masses.

\subsubsection{A Dobrushin-type theorem}\label{sec:dobrushin}
The true critical exponent of the averaging process, together with the value of the noise coefficient $\gamma$ in \eqref{eq:mSHE-theorem}, can also be guessed from a second-moment computation of the tilted field $\cY_{\lambda,N}$ in \eqref{eq:tilted-field}. Indeed, for all $\lambda \in (1/2,1)$, this second moment reduces to the expectation of an exponential functional of an auxiliary walk\footnote{Actually, one encounters a tilted walk (see Section \ref{sec:moment-repr}), but this detail is unimportant at this level.} $(X_t)_{t\ge 0}$ on $\Z$, defined as the gap process $R^1 - R^2$ for the two-point motion $(R^1, R^2)$ of the RWRE associated to the averaging process. This process is essentially the symmetric simple random walk $(R_{2t})_{t\ge 0}$, except for a small local defect around the origin; see Section \ref{sec:moment-repr} below for the details.

In a general RWRE model whose environment law is also invariant under space reflection, the exponent of this exponential has the formal expansion
\begin{equation}\label{eq:Phi-series}
	\sum_{q=1}^\infty 2^{-q} 
	N^{-2q(1-\lambda)} 
	\int_0^{tN}\Phi_q(X_s)\,\dd s
	\fstop
\end{equation}
The framework developed in \cite{parekh2024hierarchy,drillick_parekh_random_2025} says that, if the $p$-th quenched moment is the first genuinely random one, then
\begin{equation}\label{eq:p-condition-Phi}
	\Phi_1=\ldots=\Phi_{p-1}=0\comma
	\qquad
	\Phi_p\neq 0
	\fstop
\end{equation}
In this case, the first nonzero term in \eqref{eq:Phi-series} is
\begin{equation}
	N^{-2p(1-\lambda)}
	\int_0^{tN}\Phi_p(X_s)\,\dd s
	\fstop
\end{equation}
As $(X_t)_{t\ge 0}$ diffusively scales to a Brownian motion $(W_t)_{t\ge 0}$, classical local-time theorems are expected to apply, yielding
\begin{equation}\label{eq:classical-local-time}
	N^{-1/2}
	\int_0^{tN}\Phi_p(X_s)\,\dd s
	\xLongrightarrow[N\to \infty]{}
	\gamma_p^2\,L_t^W\comma\quad \text{with}\ \gamma_p^2= \sum_{x\in \Z}\Phi_p(x)\,\pi^{\rm inv}(x)\fstop
\end{equation}
Here, $L_t^W$ denotes the local time at the origin of $(W_t)_{t\ge 0}$, while $\pi^{\rm inv}$ denotes the invariant measure of the walk $(X_t)_{t\ge 0}$. Thus, in the setting \eqref{eq:p-condition-Phi}, the natural matching condition is
\begin{equation}
	N^{-2p(1-\lambda)}
	=
	N^{-1/2}
	\comma
\end{equation}
which gives $\lambda=\lambda_{c,p}=1-(4p)^{-1}$.

Although the averaging process belongs to the $p=1$ hierarchy, the corresponding additive functional vanishes at this first scale. More precisely, $\Phi_1\neq 0$, but $\Phi_1$ has mean zero (Lemma~\ref{lem:two-point-discrepancy}):
\begin{equation}\label{eq:Phi1-zero-mass}
	\gamma_1^2=\sum_{x\in \Z}\Phi_1(x)\,\pi^{\rm inv}(x)=0\fstop
\end{equation}
Consequently, the local-time convergence in \eqref{eq:classical-local-time} gives no contribution when $\lambda=\lambda_{c,1}=3/4$.

The fact that $\Phi_1$ has mean zero can be understood intuitively as follows. By
\cite{parekh2024hierarchy,drillick_parekh_random_2025},  
\begin{equation}
   \Phi_1(x) = \mathbb C\text{ov}_{(0,x)}(R^1_1, R^2_1)\comma\qquad x \in \Z\comma
\end{equation}
where the right-hand side denotes the covariance of the two-point motion $(R^1_t, R^2_t)$ at time $t=1$ started from initial conditions $(R^1_0, R^2_0) = (0,x)$. Furthermore, for the averaging process, the invariant measure $\pi^{\rm inv}$ for the gap process is just the counting measure on $\mathbb Z$ (see Proposition \ref{pr:gapprocess-tilt}). Suppose the two random walkers $R^1$ and $R^2$ to be at the same location $x$. Their increments will be positively correlated if they both jump across the same edge, and this happens with rate $2 \times \frac14 = \frac12$, where $2$ is the rate at which one of the two Poisson clocks on the edges $\{x, x+1\}$ or $\{x-1, x\}$ rings and $\frac14$ is the probability that both particles make the jump. When the two particles are at neighboring locations $x, x+1$, their increments are negatively correlated if the two particles swap locations; this happens at rate $1 \times \frac14$, where $1$ is the rate at which the Poisson clock rings along the edge $\{x, x+1\}$ and $\frac14$ is the probability that both particles make the jump.  
Observe that the rate of the negative contribution is half the rate of the positive contribution. However, since we are summing $\Phi_1(x)$ against the counting measure, every possible gap between particles is equally weighted. In particular, a gap of magnitude $1$ (i.e., $x=\pm1$) is weighted twice as much as a gap of magnitude $0$ (i.e., $x=0$). In this way, the positive and negative contributions exactly cancel out. This argument is made rigorous in Lemma \ref{lem:two-point-discrepancy}. 

This is where one needs a Dobrushin-type local-time theorem \cite{dobrusin_two_1955}; see also \cite{csaki_random_2009} and the references therein. In the form used here (Theorem \ref{th:Dobrushin-k=2}), it says that, for a nonzero function $f:\Z\to \R$ supported on a finite set and satisfying $\sum_{x\in \Z}f(x)\,\pi^{\rm inv}(x)=0$,
\begin{equation}\label{eq:Dobrushin-type}
	N^{-1/4}
	\int_0^{tN} f(X_s)\,\dd s
	\xLongrightarrow[N\to \infty]{}
	\sigma(f)\,B_{L_t^W}\fstop
\end{equation}
Here, $(B_t)_{t\ge 0}$ is another Brownian motion, independent of $(W_t)_{t\ge 0}$, while $\sigma^2(f)\ge 0$ is the variance associated with this additive functional.

Applying \eqref{eq:Dobrushin-type} with $f=\Phi_1$ to the first-order term of the averaging process forces
\begin{equation}
	N^{-2(1-\lambda)}
	=
	N^{-1/4}
	\comma
\end{equation}
namely $\lambda=7/8$. Since $\sigma_1:=\sigma(\Phi_1)$ turns out to be strictly positive, $\Phi_1$ gives a non-trivial contribution at $\lambda=7/8$. As $7/8=\lambda_{c,2}$, the second term in \eqref{eq:Phi-series} also contributes at the same scale, as $\Phi_2$ has non-zero mean. Therefore, in the averaging process, the limiting coefficient {$1/\sqrt 2$} in \eqref{eq:mSHE-theorem} is not produced by a single local-time term. It is the effect of two contributions: the  $\sigma_1$ coming from the mean-zero term $\Phi_1$, and the ordinary  $\gamma_2$ coming from $\Phi_2$.

Let us remark that this simultaneous-contribution phenomenon seems to be special to the first level of the hierarchy. Indeed, a mean-zero contribution $\Phi_p$ and a nonzero-mass contribution $\Phi_q$, with $q>p$, are simultaneous if their critical scales coincide. The Dobrushin fluctuation of $\Phi_p$ is critical when
$N^{-2p(1-\lambda)}N^{1/4}\asymp 1$, 
whereas the ordinary local-time contribution of $\Phi_q$ is critical when
$N^{-2q(1-\lambda)}N^{1/2}\asymp 1$.
Equating the two scales gives
\begin{equation}
	q=2p\fstop
\end{equation}
Thus, this mechanism involves adjacent orders only when $p=1$. This is precisely what happens for the averaging process. We leave to the reader the easy verification that the same mechanism is also present in natural block variants of the model, in which perfect averaging updates are performed on intervals of size $k\ge 2$, rather than only on nearest neighbors.

When $p\ge 2$, 
the $p$-th and the $q=2p$-th terms being dominant would  require, aside from
\begin{equation}\label{eq:general-p-partI}
\Phi_p\neq 0\comma\qquad \Phi_{2p}\neq 0\comma\qquad \textstyle{\sum_{x\in \Z}}\,\Phi_p(x)\,\pi^{\rm inv}(x)=0\comma
\end{equation}
also the additional conditions:
\begin{equation}\label{eq:general-p-partII}
\Phi_r=0\  \text{for all}\  r<p\comma\qquad \textstyle{\sum_{x\in \Z}}\,\Phi_r(x)\,\pi^{\rm inv}(x)=0\  \text{for all}\  p<r<2p=q\fstop
\end{equation}
These should only be regarded as formal necessary conditions. A calculation at the level of quenched moments suggests that, under a space-time i.i.d.\ random environment, they may be incompatible as soon as $p\ge 2$; see Section \ref{sec:questions} below.

\subsubsection{Proof strategy and technical challenges}
\label{sec:proof-strategy}
The proof of Theorem \ref{thm:main} follows the usual tightness-plus-characterization scheme. A key point, however, is that we use convergence of finite-dimensional distributions twice: as usual, to characterize the limiting field, but also as one of the inputs in the proof of tightness. 

We start by proving finite-dimensional convergence of the tilted fields.
The main tool is the recent moment-based characterization of stochastic flows, or propagators, associated with the one-dimensional multiplicative SHE in \eqref{eq:mSHE-theorem}; see \cite{parekh_moment_2025}. This builds on the analogous characterization of  \cite{tsai2024stochastic} for the two-dimensional critical heat flow \cite{caravenna_sun_zygouras_critical_2023}. In our setting, this characterization reduces the problem to proving convergence of all joint moments of the tilted fields, started from general Dirac initial conditions $z_N\in N^{-1/2}\Z$, with $z_N\to z\in\R$.

These joint moments are expressed through tilted multi-particle motions. For the $k$-th moment, one has to analyze a tilted $k$-point motion
$
(R^{i})_{i=1,\ldots,k}
$
and its pairwise differences
\begin{equation}\label{eq:diff-processes}
	(R^{i}-R^{j})_{1\le i<j\le k}\fstop
\end{equation}
The required moment limits follow from joint functional CLTs for the $k$-point motion, for the gap processes in \eqref{eq:diff-processes}, and for additive functionals which charge the times at which some particles are at uniformly bounded lattice distance from each other. We refer to these times as collisions.

For $k=2$, this reduces to proving a convergence of the form \eqref{eq:Dobrushin-type}, with $X=R^{(\lambda,1)}-R^{(\lambda,2)}$. For $k\ge 3$, analogous additive functionals appear for the tilted $k$-particle motion. As in many other works on RWRE limits, the main point is to prove that, in the limit, only pairwise collisions contribute, while simultaneous collisions involving three or more particles give negligible error terms.

In \cite{drillick_parekh_random_2025}, this issue is handled through on- and off-diagonal heat-kernel upper bounds for a general class of non-reversible Markov chains on lattices with short-range interactions. Those estimates are well suited to the usual $N^{-1/2}$ local-time scale. In the present model, however, the relevant fluctuations occur at the finer scale $N^{-1/4}$, and the existing bounds are not sharp enough to discard the higher-order collision terms. A substantial part of the proof is therefore devoted to refining these heat-kernel and collision estimates to the scale required by \eqref{eq:Dobrushin-type}. This is one of the main technical novelties of the paper, and is carried out in Section \ref{sec:higher-moments}.

We then prove tightness of the tilted fields in
$\cD([0,\infty);\mathscr M_f(\R))$
with respect to the uniform topology. The argument is rather soft: it combines the classical stopping-time tightness criterion of Aldous \cite{aldous_stopping_1978} with a second criterion of Aldous based on finite-dimensional convergence of martingales \cite{aldous_stopping_1989}. This is where the finite-dimensional convergence proved earlier enters the tightness proof. In particular, this approach avoids estimating $(1+\varepsilon)$-th moments of the carré-du-champ of the field martingales (see Remark \ref{rem:carre-du-champ}), often the most cumbersome step in pathwise $\cD$-tightness proofs based on stopping times.

We also note that both the tightness result and its proof differ from those in
\cite{das_drillick_parekh_multiplicative_2026,parekh2024hierarchy,drillick_parekh_random_2025},
where pathwise $\mathcal C$-tightness was obtained for linearly interpolated fields through the Kolmogorov--Chentsov tightness criterion.

\subsubsection{Further questions}\label{sec:questions}  
We close with a few open problems suggested by the preceding discussions.

First, it would be interesting to understand how exceptional the averaging process is. In Section \ref{sec:dobrushin}, we explained that its limiting coefficient comes from two simultaneous contributions: a mean-zero Dobrushin fluctuation generated by $\Phi_1$, and an ordinary local-time contribution generated by $\Phi_2$. As observed above, this simultaneous-contribution mechanism involves adjacent-level contributions only for $p=1$. For $p\ge 2$, the analogous mechanism would instead require a resonance between the non-adjacent orders $p$ and $2p$, together with the cancellation conditions; see \eqref{eq:general-p-partI}--\eqref{eq:general-p-partII}. A preliminary calculation suggests that these conditions may be incompatible within the standard class of RWRE models driven by a space-time i.i.d.\ environment. This points to the problem of identifying a broader framework, possibly involving correlated environments or more general stochastic flows, in which higher-level resonances can occur, and to determine whether they lead to further KPZ-type limits or to genuinely new universality classes.

Second, as already discussed in Section \ref{sec:fluctuation-regimes}, the supercritical regime remains poorly understood. For the averaging process, Theorem \ref{th:non-critical} identifies the onset of intermittency at the level of second moments, but it does not describe the typical behavior of the tilted field when $\lambda\in(7/8,1]$. More generally, even for one-dimensional RWRE models, first-order limits for $\lambda\in(\lambda_c,1)$ and fluctuation limits beyond criticality are largely open outside a few exactly solvable cases. One would like to identify the deterministic centering in the supercritical decomposition, the order and limiting law of the residual fluctuations. At the endpoint $\lambda=1$, this connects with the large-deviation regime, where Tracy--Widom fluctuations are known only in special integrable models.

Third, one expects an analogous phase diagram in higher spatial dimensions, although the relevant scales are different from the one-dimensional ones. For directed polymers in random environments (see, e.g., \cite{zygouras_directed_2024,junk_lacoin_coincidence_2026}, and references therein), the subcritical, critical, and supercritical regimes depend strongly on dimension, and a similar picture is expected for RWRE and stochastic-flow models. In particular, the subcritical regime for a broad class of RWREs in all spatial dimensions is now covered by \cite{drillick_parekh_random_2025}, at least under the higher-dimensional analogue of the non-degeneracy condition $\lambda_c=\lambda_{c,p}$. The critical and supercritical regimes in dimensions $d\ge 2$, both for the averaging process and for more general RWRE models, remain open.

 \subsection*{Organization of the paper}
The rest of the paper is organized as follows. In Section \ref{sec:moment-repr}, we derive a moment representation for the fields $\cY_{\lambda,N}$ in terms of $k$-point motions. In Section \ref{sec:2nd-moment}, we analyze the second moment, prove the cancellation of the ordinary local-time coefficient, and identify the Dobrushin-type contribution at the scale $\lambda=7/8$. In Section \ref{sec:higher-moments}, we extend the argument to higher moments by proving the required heat-kernel, collision, and additive-functional estimates for the tilted multi-point motions. In Section \ref{sec:conv-fdd}, we use these moment asymptotics to prove convergence of finite-dimensional distributions and identify the limiting multiplicative SHE propagator. In Section \ref{sec:tightness}, we prove tightness. Finally, in Section \ref{sec:proof-final}, we combine these inputs to prove Theorems \ref{thm:main} and \ref{th:non-critical}.

\section{Moment representation: $k$-point motion and gap processes}\label{sec:moment-repr}

A key ingredient to compute moments of the fields in \eqref{eq:tilted-field} will be the so-called \emph{$k$-point motion} of the averaging process, which describes the annealed motion of $k$ particles sampled independently using the same Poisson clocks. In this section, we define the $k$-point motion, and explain how to represent the moments of $\cY_{\lambda,N}$ in terms of a tilted version of it.

\subsection{$k$-point motion and its tilted variant}
We first define a four-parameter extension of the averaging process $\eta=(\eta_t(x))_{t\ge 0,\,x\in \Z}$ (Section \ref{sec:averaging-process}) as follows. Let $\Z$ be the integer lattice, and assign each bond between neighboring integers an independent Poisson point process of unit intensity. In what follows, $\P$ and $\E$ denote the law and corresponding expectation associated to this random environment.
Given a realization of this Poisson environment, we run a random walk on $\mathbb Z$,  in which the particle traverses a bond only at the occurrence of a Poisson mark, and with probability $1/2$, independently at each step. 
    
\begin{definition}[Stochastic kernel]\label{kst}  For all $0\le s \le t$ and $x,y \in \Z$, let $K_{s,t}(x,y)$ be the probability that the particle starting from time $s$ and location $x$ ends up at position $y$ at time $t$.  We refer to $K=(K_{s,t}(x,y))_{s\le t,\,x,y\in \Z}$ as the averaging process' \textit{stochastic kernel}.
\end{definition}
When $s=0$, we simply write $K_t=K_{0,t}$.
In this way, the original Markov process introduced in Section \ref{sec:averaging-process} is embedded inside the family $K_{s,t}(x,y)$ by setting $s=x=0$, i.e., $\eta_t(y) = K_t(0,y)$.
    
\begin{definition}[$k$-point motion]
   \label{kpoint}
    For $k\in \Bbb N$, define the \textit{$k$-point motion} to be the continuous-time Markov process $\mathbf R=(R^1,\ldots,R^k)$ on $\Bbb Z^k$ with transition probabilities  given, for every $t\ge0$ and $\mathbf x = (x_1,\ldots,x_k), \mathbf y=(y_1,\ldots,y_k)\in \Z^k$, by
    \begin{equation}\pp^{(k)}_t(\mathbf x, \mathbf y)\eqdef \Bbb E \bigg[ \prod_{j=1}^k K_t (x_j,y_j)\bigg]\fstop
    \end{equation} 
     Let $\cL^{(k)} (\x,\y):= \frac{\dd}{\dd t}\big|_{t=0}\, \pp^{(k)}_t(\mathbf x,\mathbf y) $, $\x, \y \in \Z^k$, denote the corresponding generator, while
    $\mathbf P_\x^{(k)}$ and $\mathbf E_\x^{(k)}$ stand for the path law and corresponding expectation when starting from $\x\in \Z^k$.
    \end{definition}
    Note that for $k=1$, the $k$-point motion reduces to $(R_t)_{t\ge 0}$ from \eqref{eq:quenched-annealed}, namely, the CTRW on $\Z$ whose rate to jump from $x$ to $x\pm 1$ equals $1/2$. In accordance with \eqref{eq:P=pexp}, \begin{equation}\pp_t(y-x)=\pp_t(x-y)=\pp_t^{(1)}(x,y)\comma\qquad t \ge 0\comma \x, \y \in \Z^k\fstop
    \end{equation}
    Indeed, the one-point motion may be described as a particle on $\Z$ which, whenever a Poisson clock attached to an adjacent bond rings, it relocates itself with probability $1/2$ to the other endpoint of the bond. As Poisson clocks ring at unit rate, this coincides with the law of $(R_t)_{t\ge 0}$.
    
   For $k > 1$, the $k$ coordinates nontrivally interact by possibly jumping simultaneously together: when a bond rings, all particles sitting on the two endpoints of the bond independently move to the other end with probability $1/2$. This process may be viewed as a labeled version of the Binomial splitting process, see, e.g., \cite{quattropani2021mixing,bristiel_caputo_entropy_2021}. Note that, for each $k\ge 2$ and $1\le \ell < k$, the marginal law of any $\ell$-tuple of the $k$ coordinates evolves as an $\ell$-point motion. We shall further elaborate on this and other properties in the next section.

    Let us now introduce the tilted variant of these processes. Remark that such objects are parametrized by a tilt parameter $\beta\in \R$. When a scaling involving $\lambda\le 1$ and $N\in \N$ takes place, we
shall specialize $\beta$ to $\beta_{\lambda,N}=\operatorname{arcsinh} (N^{\lambda-1})$ as in \eqref{eq:beta-lambda-N}.
    \begin{definition}[Tilted $k$-point motion]\label{tilt} For $\beta\in \Bbb R$ and $k\in \N$, we define the \textit{$\beta$-tilted $k$-point motion} to be the Markov process whose generator is given by \begin{equation}\cL^{(\beta,k)}(\x,\y)=e^{\beta \sum_{j=1}^k (y_j-x_j)} \cL^{(k)}(\x,\y) - \ind_{\{\y=\x\}} \sum_{\mathbf z\in\Bbb Z^k} e^{\beta \sum_{j=1}^k (z_j-x_j)} \cL^{(k)}(\x,\mathbf z)\fstop
    \end{equation}
    In analogy with Definition \ref{kpoint}, we write $\mathbf P_\x^{(\beta,k)}$ and $\mathbf E_\x^{(\beta,k)}$ for the corresponding law and expectation, and $\mathbf p_t^{(\beta,k)}$ for its transition probabilities.
\end{definition}
\begin{remark}[Girsanov interpretation]\label{rem:girsanov-tilted-k-point}
	Let us motivate the above definition as a \textit{tilted} $k$-point motion. Let
	$\mathbf R=(R^1,\ldots,R^k)$ be the $k$-point motion on $\Z^k$ with
	generator $\cL^{(k)}$, and
    \begin{equation}\label{eq:F-beta-k-def}
		F_\beta(\mathbf x)=F_{\beta,k}(\x)
		\eqdef
		\beta\sum_{i=1}^k x_i\comma
		\qquad
		\mathbf x=(x_1,\ldots,x_k)\in\Z^k\fstop
	\end{equation} Then (see, e.g., \cite[Appendix 1, Section 7]{kipnis_scaling_1999}),
	\begin{equation}\label{eq:k-point-girsanov-martingale}
		\mathsf M_t
		\eqdef
		\exp\set{
			F_\beta(\mathbf R_t)-F_\beta(\mathbf R_0)
			-
			\int_0^t e^{-F_\beta}(\mathbf R_s)\cL^{(k)}e^{F_\beta}(\mathbf R_s)\,\dd s
		}\comma\qquad t\ge 0\comma
	\end{equation}
	is a positive martingale with mean one under $\mathbf P_{\mathbf x}^{(k)}$,
	for every $\mathbf x\in\Z^k$. Hence, we may define a
	new probability, whose Radon--Nikodym derivative with respect to $\mathbf P^{(k)}$, evaluated on paths up to time $t$, is given by $\mathsf M_t$.
	This corresponds to the $F_\beta$-tilt of the $k$-point motion. Under
	this measure, the process is again a continuous-time
	Markov chain, with generator
	\begin{equation}\label{eq:k-point-F-tilted-generator}
		e^{-F_\beta}\cL^{(k)}(e^{F_\beta} g)
		-
		(e^{-F_\beta}\cL^{(k)}e^{F_\beta})g\fstop
	\end{equation}
    By taking $g=\car_{\{y\}}$ and $\mathbf x\neq\mathbf y$, the above expression equals 
	\begin{equation}\label{eq:k-point-F-tilted-rates}
		e^{F_\beta(\mathbf y)-F_\beta(\mathbf x)}
		\cL^{(k)}(\mathbf x,\mathbf y)\fstop
	\end{equation}
    Consequently, one indeed recovers $\cL^{(\beta,k)}(\x,\y)$ from Definition \ref{tilt} and, thus, this tilted measure is precisely $\mathbf P_\x^{(\beta,k)}$.
\end{remark}
\begin{remark}[Discrete-time approximation]
Intuitively, the tilted $k$-point motion is obtained by taking the discrete-time Markov chain with transition probabilities \begin{equation} \label{eq:discrete}\frac{e^{\beta \sum (y_j-x_j)} \pp^{(k)}_\epsilon(\x,\y)}{\sum_{\mathbf z\in \Z^k} e^{\beta \sum(z_j-x_j)} \pp^{(k)}_\epsilon(\x,\mathbf z) }\comma\qquad t \ge 0\comma \x, \y \in \Z^k\comma
\end{equation}
with $\epsilon^{-1}$ equally-spaced steps in a time unit, and then letting $\epsilon\to 0$ to obtain a continuous-time process. 
\end{remark}

If $\beta=0$, one recovers the $k$-point motion (i.e., $\cL^{(\beta,k)}=\cL^{(k)}$), whereas for $\beta >0$ (resp.\ $\beta<0$), the $\beta$-tilted process induces, on each coordinate, a positive (resp.\ negative) drift. For instance, for $k=1$, $\cL^{(\beta,k)}=\cL^{(\beta,1)}$ describes the motion of the CTRW taking one step to the right (resp.\ left) with rate $e^{\beta}/2$ (resp.\ $e^{-\beta}/2$).

The role of this $\beta$-tilting is to probe the behavior of the averaging process in extremal regimes, whereas the untilted case $\beta=0$ would correspond to the bulk regime.
As already mentioned in Section \ref{sec:intro}, in this work we will  primarily be concerned with the case where $\beta\sim N^{-1/8}$ (i.e., $\beta=\beta_{\lambda,N}=\operatorname{arcsinh}(N^{\lambda-1})$ with $\lambda = 7/8$), while diffusively rescaling the process---so that time is of order $N$ and space of order $N^{1/2}$.

\subsection{Gap processes}\label{sec:gapprocesses}
Recall Definitions \ref{kpoint} and \ref{tilt}.
\begin{definition}[Gap process]
For $\beta\in \R$ and $k\ge 2$, a \textit{gap process} is defined to be the difference of two coordinates of the $\beta$-tilted $k$-point motion, i.e., $R^i-R^j$, $i,j=1,\ldots, k$, $i\neq j$. When $\beta = 0$ (resp.\ $\beta \neq 0$), we speak of the \textit{untilted} (resp.\ \textit{tilted}) gap process. 
\end{definition}

When $\beta=0$, the gap process is particularly simple to describe.

\begin{proposition}\label{gapprocess}
For $\beta=0$, the gap process of the averaging process is a Markov process on $\Bbb Z$ whose transitions go as follows. Suppose that it sits at $x\in \Z$:
    \begin{enumerate}[(i)]
        \item  if  $x\neq 0, \pm 1$, it jumps to $x\pm1$ with rate $1$;
        \item  if $x=0$,  it jumps to $1$ with rate $1/2$, and to $-1$ with rate $1/2$;
        \item  if $x=\pm 1$, it jumps to $\mp 1$ with rate $1/4$, to $0$ with rate $1/2$, and to $\pm 2$ with rate $1$.
    \end{enumerate}
\end{proposition}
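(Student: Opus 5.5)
We write down the generator $\cL^{(2)}$ of the (untilted) two-point motion $\mathbf R=(R^1,R^2)$ explicitly, and then check that the image process $X=R^1-R^2$ has transition rates depending only on the current value of $X$. This is Dynkin's lumpability criterion: if for every $\x=(x_1,x_2)$ and every $d\neq x_1-x_2$ the total rate $\sum_{\y:\,y_1-y_2=d}\cL^{(2)}(\x,\y)$ depends on $\x$ only through $x_1-x_2$, then $X$ is Markov with these rates. Translation invariance of the Poisson environment already makes the rates depend only on the configuration up to shifts. So it suffices to compute, for each relative position of the two particles, which bonds can move them and with what probabilities.

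\emph{Step 1: rates of $\cL^{(2)}$.} Each bond rings at rate $1$. At a ring of $\{z,z+1\}$, every particle on $z$ or $z+1$ crosses independently with probability $1/2$. Particles not on these endpoints are unaffected. By Definition \ref{kpoint}, $\cL^{(2)}(\x,\y)$ is the sum over bonds of the probability that this ring maps $\x$ to $\y$.

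\emph{Step 2: case analysis on $x=x_1-x_2$.}
\begin{enumerate}[(i)]
\item $|x|\ge2$. No bond touches both particles, so they move as independent walkers. Each of the four single moves has rate $1/2$. The moves $R^1\to R^1+1$ and $R^2\to R^2-1$ both send $X\to X+1$, giving total rate $1$; symmetrically $X\to X-1$ at rate $1$.
\item $x=0$, both particles at $z$. Bond $\{z,z+1\}$ rings at rate $1$, and with probability $1/4$ only $R^1$ crosses ($X\to1$), while with probability $1/4$ only $R^2$ crosses ($X\to-1$). Crossing together or staying leaves $X=0$. Bond $\{z-1,z\}$ contributes the same, so $0\to\pm1$ each has rate $2\cdot\tfrac14=\tfrac12$.
\item $x=1$, with $R^2=z$, $R^1=z+1$. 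For the shared bond $\{z,z+1\}$ (rate $1$), the outcomes are: both cross with probability $1/4$ ($X\to-1$); exactly one crosses with probability $1/2$ ($X\to0$); neither crosses ($X=1$). The outer bonds $\{z+1,z+2\}$ and $\{z-1,z\}$ each move a single particle outward at rate $1/2$, giving $X\to2$ at total rate $1$. The case $x=-1$ follows by symmetry.
\end{enumerate}
Simultaneous changes of $X$ by more than these amounts do not occur, since each ring affects only one bond.

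\emph{Step 3: conclusion.} In each case the aggregated rates depend only on $x$, so the lumpability criterion applies. Since the rates are bounded, there is no explosion, and $X$ is a continuous-time Markov chain with exactly the stated rates.

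The only delicate point is the bookkeeping in case (iii): one must correctly attribute the swap (both particles cross, probability $1/4$) to $X\to\mp1$, and distinguish it from the single crossings that yield $X\to0$. Everything else is routine.
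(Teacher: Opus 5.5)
Your proof is correct and takes essentially the paper's approach: the gap rates are read off from the bond-ringing mechanism of the two-point motion. You make explicit the case bookkeeping and the lumpability justification for Markovianity, both of which the paper leaves implicit. One small addition is needed because the paper's gap process is defined for pairs in the $k$-point motion. You should include the paper's preliminary observation that, for $\beta=0$, any pair $(R^i,R^j)$ of the $k$-point motion is itself a two-point motion, since the other particles' independent coin flips sum out in the same lumpability check; with that, your $k=2$ computation covers all gap processes.
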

\begin{proof}
As already mentioned, for $\beta=0$, any two coordinates of the $k$-point motion evolve as a two-point motion. The desired claim follows as $R^i$ and $R^j$, $i\neq j$, evolve as two independent copies of the CTRW $R$ as long as they sit at distance at least two, while when at distance one or zero, they may resample their positions simultaneously if the Poisson clock attached to an adjacent bond rings. In that case, the two particles choose independently and with probability $1/2$ to relocate in either one of the two endpoints of the bond.
\end{proof}
In other words, 
the untilted gap process is a symmetric random walk on $\Z$, with slow bonds attached to the origin. Indeed, as the one-point motion of the averaging process is a CTRW crossing bonds with rate $1/2$, the difference of two independent copies of such   walks is a CTRW crossing bonds with a doubled rate. The untilted gap process is like the latter process, but with slow bonds attached to the origin, ringing at half rate (plus a teleportation $\pm 1\to \mp 1$ with rate $1/4$).

As a direct consequence of Proposition \ref{gapprocess} and the fact the untilted gap process is a CTRW on $\Z$ with symmetric conductances, the counting measure on $\Z$ is invariant and, actually, reversible for the process. We record this result in the following corollary. 
\begin{corollary}\label{prop:inv} For $\beta=0$, the generator $\cL^{\rm gap}$ of the untilted gap process is self-adjoint on $L^2(\Z,\pi)$, where $\pi$ is the counting measure on $\Z$. 
\end{corollary}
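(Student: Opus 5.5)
The plan is to check symmetry of the generator matrix directly. For a continuous-time Markov chain on a countable set with bounded jump rates, self-adjointness of the generator on $L^2(\Z,\pi)$ with $\pi$ the counting measure is the detailed balance condition
\begin{equation}
	\pi(x)\,\cL^{\rm gap}(x,y)=\pi(y)\,\cL^{\rm gap}(y,x)\comma\qquad x\neq y\comma
\end{equation}
which here reduces to $\cL^{\rm gap}(x,y)=\cL^{\rm gap}(y,x)$ for all $x\neq y$. Once symmetry holds, we have $\langle f,\cL^{\rm gap} g\rangle_{\ell^2}=\sum_{x,y}f(x)\cL^{\rm gap}(x,y)(g(y)-g(x))=\langle \cL^{\rm gap}f,g\rangle_{\ell^2}$. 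Total jump rates are bounded by $9/4$, so $\cL^{\rm gap}$ is a bounded operator on $\ell^2(\Z)$. Hence bounded and symmetric already means self-adjoint, and no domain issues arise.

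The work is a case check of the rates listed in Proposition \ref{gapprocess}. The generator only connects $x$ to $x\pm1$, together with the extra pair $1\leftrightarrow -1$.
\begin{enumerate}[(a)]
	\item For $|x|,|x+1|\ge 1$ and $\{x,x+1\}\neq\{-1,0\},\{0,1\}$, both directions have rate $1$ by item (i). The bonds $\{1,2\}$ and $\{-2,-1\}$ also have rate $1$ in both directions: from $\pm1$ to $\pm2$ by item (iii), and from $\pm2$ to $\pm1$ by item (i).
	\item For the bonds $\{0,\pm1\}$, the rate from $0$ to $\pm1$ is $1/2$ by item (ii), and the rate from $\pm1$ to $0$ is $1/2$ by item (iii).
	\item For the teleportation pair, the rate from $1$ to $-1$ and the rate from $-1$ to $1$ are both $1/4$.
\end{enumerate}
All off-diagonal entries are therefore symmetric, and the counting measure is reversible, hence invariant.

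The only step needing care is that Proposition \ref{gapprocess} gives the full list of transitions, so that no other off-diagonal rates exist; this is part of that proposition, so there is no real obstacle. One can also note as a consistency check that the gap process is a nearest-neighbour walk with symmetric conductances $c(x,x+1)$ equal to $1$ away from the origin and $1/2$ on the two bonds at the origin, plus a conductance $1/4$ on the pair $\{-1,1\}$. Any such walk is reversible with respect to the counting measure.
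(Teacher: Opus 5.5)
Your proposal is correct and follows the same route as the paper. The paper deduces the claim directly from Proposition~\ref{gapprocess}: the untilted gap process is a walk with symmetric conductances ($1$ away from the origin, $1/2$ on the bonds $\{0,\pm1\}$, and $1/4$ on the pair $\{-1,1\}$), hence reversible for the counting measure. Your explicit case check of the off-diagonal rates, together with the observation that bounded rates make $\cL^{\rm gap}$ a bounded symmetric (hence self-adjoint) operator on $\ell^2(\Z)$, just spells out details the paper leaves implicit.
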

Tilted gap processes of the $k$-point motion are more delicate than their
untilted counterparts. This is already visible at the level of coordinate
projections. Indeed, if $\beta\neq0$, $k\ge2$, and $1\le \ell<k$, then the
$\ell$-coordinate marginal of the tilted $k$-point motion is not, in general,
Markovian. In particular, its law need not coincide with that of the tilted
$\ell$-point motion.

For example, take $k=2$ and project onto the first coordinate. The jump rate of
$R^1$ from $0$ to $1$ depends on the position of the second coordinate: it is
\begin{equation}
	\tfrac12 e^\beta
	\quad\text{if}\  R^2\notin\set{0,1}\comma
	\qquad
	\tfrac14 e^\beta\tttonde{1+e^\beta}
	\quad\text{if}\  R^2=0\comma
	\qquad
	\tfrac14\tttonde{1+e^\beta}
	\quad\text{if}\ R^2=1\fstop
\end{equation}
Thus, the evolution of $R^1$ alone retains information about the discarded
coordinate.

Nevertheless, in the special case $k=2$, the tilted gap process is still
Markovian. More precisely, it evolves as a speed change of the untilted gap
process. The next result is the tilted analogue of Proposition~\ref{gapprocess};
it follows directly from Definition~\ref{tilt} with $k=2$ and arbitrary
$\beta\in\R$. We leave the elementary verification to the reader.

\begin{proposition}\label{pr:gapprocess-tilt} Fix $k=2$. Then, for every $\beta\in \R $, the gap process $X\eqdef R^1-R^2$ is a ${\rm CTRW}$ on $\Z$. Supposing that the gap process sits at $x \in \Z$, its evolution goes as follows:
\begin{enumerate}[(i)]
    \item if $x\neq 0,\pm 1$,  it  jumps to $x\pm 1$ with rate $\cosh(\beta)$;
    \item if $x=0$, it jumps to $\pm 1$ with rate $\frac12\cosh(\beta)$;
    \item if $x=\pm 1$, it jumps to $\pm 2$ with rate $\cosh(\beta)$, to $0$ with rate $\frac12\cosh(\beta)$, and to $\mp 1$ with rate $1/4$.
\end{enumerate}
As a consequence, the counting measure $\pi$ on $\Z$ is reversible for $X$.
\end{proposition}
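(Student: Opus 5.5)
The plan is a direct generator computation. We write down the rates of the tilted two-point motion from Definition~\ref{tilt} with $k=2$ and check three things: the rate at which $X=R^1-R^2$ moves depends on $(R^1,R^2)$ only through $X$; these rates are the ones listed in (i)--(iii); and the resulting rates are symmetric, which gives reversibility of $\pi$.

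\emph{Step 1: untilted rates.} Fix $(x_1,x_2)$ and list the untilted jumps $\cL^{(2)}(\x,\y)$, $\y\neq\x$, using the description of the $2$-point motion (each particle on an endpoint of a ringing bond independently crosses with probability $1/2$). With $x=x_1-x_2$ there are three cases.
\begin{itemize}
\item If $|x|\ge2$, each coordinate jumps $\pm1$ at rate $1/2$, independently.
\item If $x=0$, on each of the two adjacent bonds exactly one particle moves with rate $1\cdot\tfrac12$ (split between the two particles), and both move together with rate $\tfrac14$ per bond.
\item If $x=1$, say $x_1=x_2+1$, the shared bond $\{x_2,x_2+1\}$ produces, each at rate $\tfrac14$: $R^1$ moves left alone, $R^2$ moves right alone, or both swap. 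The outer bonds move $R^1$ right, or $R^2$ left, at rate $\tfrac12$ each.
\end{itemize}

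\emph{Step 2: apply the tilt.} Multiply each rate by $e^{\beta(\Delta_1+\Delta_2)}$, where $\Delta_i$ is the displacement of particle $i$. Moves that change the gap by $\pm1$ come in pairs: one particle goes right (factor $e^\beta$), or the other goes left (factor $e^{-\beta}$). Their total is $\tfrac12(e^\beta+e^{-\beta})=\cosh\beta$ in the free case. This gives (i), and also the jump $\pm1\to\pm2$ in (iii).

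For $x=0$, the move to $+1$ comes from $R^1$ stepping right or $R^2$ stepping left, each at rate $\tfrac14$. The total is $\tfrac14(e^\beta+e^{-\beta})=\tfrac12\cosh\beta$, which is (ii). Joint moves of both particles leave the gap at $0$ and are invisible to $X$.

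For $x=1$, the move to $0$ has total rate $\tfrac14 e^{-\beta}+\tfrac14 e^{\beta}=\tfrac12\cosh\beta$. The swap has total displacement zero, so its rate stays $\tfrac14$, which gives the jump to $\mp1$ at rate $1/4$. The case $x=-1$ follows by symmetry. In every case the gap rates depend only on $x$, so Dynkin's criterion (lumpability) shows that $X$ is Markov with these rates.

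\emph{Step 3: reversibility.} Read off the conductances: $c(x,x+1)=\cosh\beta$ for $|x|,|x+1|\ge1$ not both in $\{0,\pm1\}$ appropriately, $c(0,\pm1)=\tfrac12\cosh\beta$, and $c(-1,1)=\tfrac14$. Each is symmetric under $x\leftrightarrow y$, so the counting measure satisfies detailed balance.

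There is no real obstacle. The only care point is the bookkeeping of the jump-in-place diagonal term of Definition~\ref{tilt}, which only affects the holding rate. We also note that simultaneous jumps matter only through the swap; that is why it is the one untilted ($\beta$-independent) rate.
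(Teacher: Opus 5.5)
Your proposal is correct and is the same direct verification from Definition~\ref{tilt} with $k=2$ that the paper leaves to the reader. That is, the case split $|x|\ge 2$, $x=0$, $x=\pm1$ (the same one used in Lemma~\ref{lem:two-point-discrepancy}), then lumpability for the Markov property and symmetric conductances for reversibility. The only blemish is the garbled wording in Step~3: the conductance $\cosh\beta$ sits exactly on the nearest-neighbor edges $\{x,x+1\}$ with $x\notin\{-1,0\}$, which your condition $|x|,|x+1|\ge 1$ already states without the extra qualifier.
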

Crucially, despite each coordinate of the two-point motion moves like a $\beta$-biased random walk under $\mathbf P_\x^{(\beta,k)}$, $X$ is unbiased and, actually, symmetric, with the counting measure $\pi$ on $\Z$ as its reversible measure. This feature---lacking as soon as $k\ge 3$ and $\beta\neq 0$---turns the analysis of second moments of the fields considerably less involved than those of higher ones.

\subsection{Moment representation}\label{sec:moment-representation}
We now express the $k$-th moment of the tilted fields in \eqref{eq:tilted-field} in terms of the tilted $k$-point motion. For later purposes, we generalize the definition of these fields to account for arbitrary initial conditions.

Fix $N \in \N$, and a tilt parameter $\beta \in \R$.
For $t\ge0$, $x\in\Z$, and $\phi\in\cC_b(\R)$, define
\begin{equation}\label{eq:H-beta-N}
	\mathfrak H_{\beta,N}(t,x,\phi)
	\eqdef 
	\sum_{y\in\Z}
	\exp\{\beta (y-x)-tN\tonde{\cosh(\beta)-1}\}\,
	K_{tN}(x,y)\,
	\phi(N^{-1/2}\tonde{y-tN\sinh(\beta)})\fstop
\end{equation}
Since  $N\sinh(\beta_{\lambda,N})=N^\lambda$ (cf.\ \eqref{eq:beta-lambda-N}), one recovers, for every $\lambda \le 1$, 
\begin{equation}\label{eq:H-specializes-Y}
	\mathfrak H_{\beta_{\lambda,N},N}(t,0,\phi)
	=
	\cY_{\lambda,N}(t,\phi)\fstop
\end{equation}

For illustrative purposes, we start with the simplest case of the first moment of $\mathfrak H_{\beta,N}$.
 Recall that the one-point
motion has generator
\begin{equation}\label{eq:one-point-generator}
	\cL^{(1)}g(x)
	=
	\frac12\tonde{g(x+1)+g(x-1)-2g(x)}\comma
	\qquad x\in\Z\comma g: \Z\to \R\fstop
\end{equation}
For $F_\beta(x)=F_{\beta,1}(x)=\beta x$ (cf.\ \eqref{eq:F-beta-k-def}), one has
\begin{equation}\label{eq:one-point-eigenvalue}
	e^{-F_\beta}(x)\,\cL^{(1)}e^{F_\beta}(x)
	=
	\cosh(\beta)-1\fstop
\end{equation}
Consequently, the tilted one-point generator from Definition \ref{tilt} is
\begin{equation}\label{eq:one-point-tilted-generator}
	\cL^{(\beta,1)}g(x)
	=
	\frac{e^\beta}{2}\tonde{g(x+1)-g(x)}
	+
	\frac{e^{-\beta}}{2}\tonde{g(x-1)-g(x)}\fstop
\end{equation}
Under $\mathbf P_x^{(\beta,1)}$, the canonical coordinate $R=(R_t)_{t\ge 0}$ has drift
$\sinh(\beta)$, in the sense that
\begin{equation}\label{eq:one-point-drift-beta}
	\frac{\dd}{\dd t}\mathbf E_x^{(\beta,1)}[R_t]
	=
	\sinh(\beta)\fstop
\end{equation}
The first moment of \eqref{eq:H-beta-N} is now a direct consequence of \eqref{eq:one-point-eigenvalue}, see
Remark \ref{rem:girsanov-tilted-k-point}: for all $x\in\Z$,
\begin{align}\label{eq:first-moment-H-beta}
	\E[\mathfrak H_{\beta,N}(t,x,\phi)]
	&= \mathbf E_x^{(1)}
	\big[
		\exp\set{
			\beta (R_{tN}-x)
			-
			tN\tonde{\cosh(\beta)-1}
		}
		\phi\tttonde{
			N^{-1/2}\tonde{R_{tN}-tN\sinh(\beta)}
		}
	\big]\\
    &=
	\mathbf E_x^{(\beta,1)}
	\big[
		\phi\tttonde{N^{-1/2}\tonde{R_{tN}-tN\sinh(\beta)}}
	\big]\fstop
\end{align}

We now pass from first moments to joint moments. For $k\in\N$, let $\cL^{\otimes k}$ denote the generator of $k$ independent one-point motions.
Since $F_\beta=F_{\beta,k}$ from \eqref{eq:F-beta-k-def} is additive in the coordinates, \eqref{eq:one-point-eigenvalue}
implies
\begin{equation}\label{eq:independent-k-eigenvalue}
	e^{-F_\beta}(\x)
	\cL^{\otimes k}e^{F_\beta}(\x)
	=
	k\tonde{\cosh(\beta)-1}\comma
	\qquad \x\in\Z^k\fstop
\end{equation}
For the genuine $k$-point motion, this identity no longer holds in general,
because the coordinates interact when they are close. The defect is encoded by
the exponential functional
\begin{equation}\label{eq:G-beta-k}
	\mathscr E_{\beta,k}(T) = \mathscr E_{\beta,k}(T,\mathbf R)
	\eqdef
	\exp\set{
		\int_0^T
		e^{-F_\beta}(\mathbf R_s)
		\ttonde{\cL^{(k)}-\cL^{\otimes k}}
		e^{F_\beta}(\mathbf R_s)
		\,\dd s
	}\comma
	\qquad T\ge0\fstop
\end{equation}
Here and below, $\mathbf R=(R^1,\ldots,R^k)$ denotes the canonical process
under the law displayed in the expectation. In particular, $\mathscr E_{\beta,k}\equiv 1$ for $k=1$.

The next proposition is the desired moment representation. It is stated with a
general initial condition $\mathbf x\in\Z^k$, since this is the form needed when
the moment computation is combined with later Markov-property arguments.

\begin{proposition}[Moment representation]\label{cor:moments}
	Fix $\beta\in\R$, $N\in\N$, $k\in\N$, and
	$\mathbf x=(x_1,\ldots,x_k)\in\Z^k$. Recall $\mathscr E_{\beta,k}$ from \eqref{eq:G-beta-k}. Then, for every $t\ge0$ and every
	$\phi_1,\ldots,\phi_k\in\cC_b(\R)$,
	\begin{equation}\label{eq:moment-representation-beta}
		\begin{aligned}
		&\E\bigg[
			\prod_{i=1}^k
			\mathfrak H_{\beta,N}(t,x_i,\phi_i)
		\bigg]
		=
		\mathbf E_{\mathbf x}^{(\beta,k)}
		\bigg[
			\mathscr E_{\beta,k}(tN)
			\prod_{i=1}^k
			\phi_i(N^{-1/2}\tttonde{R_{tN}^i-tN\sinh(\beta)})
		\bigg]\fstop
		\end{aligned}
	\end{equation}
\end{proposition}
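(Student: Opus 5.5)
The plan is to expand the product, pass to the untilted $k$-point motion, and then change measure via the Girsanov martingale of Remark~\ref{rem:girsanov-tilted-k-point}. First, by the definition \eqref{eq:H-beta-N} of $\mathfrak H_{\beta,N}$, the product $\prod_i \mathfrak H_{\beta,N}(t,x_i,\phi_i)$ is a sum over $\y\in\Z^k$ of
\[
\prod_i K_{tN}(x_i,y_i)\, e^{F_\beta(\y)-F_\beta(\x)-ktN(\cosh\beta-1)}\prod_i\phi_i\bigl(N^{-1/2}(y_i-tN\sinh\beta)\bigr).
\]
All terms are nonnegative up to the bounded factors $\phi_i$, and the exponential moments of the one-point motion are finite, so I may take $\E$ inside the sum by Fubini. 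Using Definition~\ref{kpoint}, namely $\E[\prod_i K_{tN}(x_i,y_i)]=\pp^{(k)}_{tN}(\x,\y)$, the left-hand side becomes
\[
\mathbf E^{(k)}_\x\Bigl[e^{F_\beta(\mathbf R_{tN})-F_\beta(\x)-ktN(\cosh\beta-1)}\prod_i\phi_i\bigl(N^{-1/2}(R^i_{tN}-tN\sinh\beta)\bigr)\Bigr].
\]

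Next, by \eqref{eq:independent-k-eigenvalue},
\[
e^{-F_\beta}\cL^{(k)}e^{F_\beta}=k(\cosh\beta-1)+e^{-F_\beta}\bigl(\cL^{(k)}-\cL^{\otimes k}\bigr)e^{F_\beta}.
\]
Integrating along the path $\mathbf R_s$ for $s\in[0,tN]$, the deterministic term $ktN(\cosh\beta-1)$ in the exponent is exactly the integral of the first summand. Hence the exponential weight factorizes as $\mathsf M_{tN}\cdot\mathscr E_{\beta,k}(tN)$, with $\mathsf M$ as in \eqref{eq:k-point-girsanov-martingale} and $\mathscr E_{\beta,k}$ as in \eqref{eq:G-beta-k}.

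Finally, Remark~\ref{rem:girsanov-tilted-k-point} states that $\mathsf M_{tN}$ is the Radon--Nikodym density of $\mathbf P^{(\beta,k)}_\x$ with respect to $\mathbf P^{(k)}_\x$ on paths up to time $tN$. The remaining functional, $\mathscr E_{\beta,k}(tN)\prod_i\phi_i(\cdots)$, is path-measurable up to that time. It is also bounded: the defect $(\cL^{(k)}-\cL^{\otimes k})$ has only finitely many nonzero rates, each at most of order $e^{k|\beta|}$, so the integrand is uniformly bounded. The change of measure therefore gives \eqref{eq:moment-representation-beta}.

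The only delicate step is the justification of the Girsanov change of measure, that is, that $\mathsf M$ is a true martingale and not merely a local one. This holds because the jump rates of $\cL^{(k)}$ are bounded and $e^{-F_\beta}\cL^{(k)}e^{F_\beta}$ is a bounded function, so the standard argument in \cite[Appendix 1, Section 7]{kipnis_scaling_1999} applies. The same boundedness also yields the integrability needed for the Fubini step.
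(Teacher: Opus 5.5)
Your proposal is correct and follows essentially the same route as the paper: expand the product into an expectation under the untilted $k$-point motion, split $e^{-F_\beta}\cL^{(k)}e^{F_\beta}$ using \eqref{eq:independent-k-eigenvalue}, and apply the change of measure of Remark~\ref{rem:girsanov-tilted-k-point}. Your extra justifications (Fubini via finite exponential moments, boundedness of the discrepancy so that $\mathscr E_{\beta,k}(tN)\le e^{CtN}$, and the true-martingale property of $\mathsf M$) fill in details that the paper leaves implicit.
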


\begin{proof} Write $\phi_{\beta,N,t}(\mathbf R_{tN})$ for the product $\prod_{i=1}^k \phi_i(\ldots)$ on the right-hand side of \eqref{eq:moment-representation-beta}.
	By Definitions \ref{kst} and \ref{kpoint}, expanding the product
	of the fields gives
	\begin{equation}\label{eq:moment-start-beta}
		\begin{aligned}
		\E\bigg[\prod_{i=1}^k
			\mathfrak H_{\beta,N}(t,x_i,\phi_i)
		\bigg]
		=
		\mathbf E_{\mathbf x}^{(k)}
		\big[
			\exp\set{
				(F_\beta(\mathbf R_{tN})-F_\beta(\x))
				-
				ktN\tonde{\cosh(\beta)-1}
			}
			\phi_{\beta,N,t}(\mathbf R_{tN})
		\big]\fstop
		\end{aligned}
	\end{equation}
	Indeed, the average of the product of stochastic kernels is precisely the
	transition probability of the $k$-point motion, while the deterministic
	exponential factors combine into
	$F_\beta(\mathbf R_{tN})-ktN(\cosh(\beta)-1)$.

	Set $T=tN$. By the change of measure in
	Remark~\ref{rem:girsanov-tilted-k-point}, we have,
	 for every bounded $\cF_T$-measurable random variable $\Psi$,
	\begin{equation}\label{eq:girsanov-identity-used}
		\mathbf E_{\x}^{(k)}
		\bigg[
			e^{F_\beta(\mathbf R_T)-F_\beta(\x)}\Psi
		\bigg]
		=
		\mathbf E_{\mathbf x}^{(\beta,k)}
		\bigg[
			\exp\set{
				\int_0^T
				e^{-F_\beta}(\mathbf R_s)
				\cL^{(k)}e^{F_\beta}(\mathbf R_s)\,\dd s
			}
			\Psi
		\bigg]\fstop
	\end{equation}
	Applying \eqref{eq:girsanov-identity-used} with
	\begin{equation}
		\Psi
		\eqdef
		\exp\set{-kT\tonde{\cosh(\beta)-1}}\,
		\phi_{\beta,N,t}(\mathbf R_T)
	\end{equation}
	gives
	\begin{equation}\label{eq:girsanov-step-moment}
		\begin{aligned}
		&\mathbf E_{\mathbf x}^{(k)}
		\big[
			\exp\set{
				(F_\beta(\mathbf R_T)-F_\beta(\x))
				-
				kT\tonde{\cosh(\beta)-1}
			}
			\phi_{\beta,N,t}(\mathbf R_T)
		]
		\\
		&\quad =
		\mathbf E_{\mathbf x}^{(\beta,k)}
		\bigg[
			\exp\set{
				\int_0^T
				\tttonde{
					e^{-F_\beta}\cL^{(k)}e^{F_\beta}(\mathbf R_s)
					-
					k\tonde{\cosh(\beta)-1}
				}\,
				\dd s
			}
			\phi_{\beta,N,t}(\mathbf R_T)
		\bigg]\fstop
		\end{aligned}
	\end{equation}
	By \eqref{eq:independent-k-eigenvalue},
	\begin{equation}
		e^{-F_\beta}\cL^{(k)}e^{F_\beta}
		-
		k\tonde{\cosh(\beta)-1}
		=
		e^{-F_\beta}
		\ttonde{\cL^{(k)}-\cL^{\otimes k}}
		e^{F_\beta}\fstop
	\end{equation}
	Therefore, the exponential functional in
	\eqref{eq:girsanov-step-moment} is exactly
	$\mathscr E_{\beta,k}(T)$ from \eqref{eq:G-beta-k}. Taking $T=tN$
	and substituting into \eqref{eq:moment-start-beta} proves
	\eqref{eq:moment-representation-beta}.
\end{proof}

Specializing Proposition~\ref{cor:moments} to
$\beta=\beta_{\lambda,N}$ and $\mathbf x=(0,\ldots,0)$ recovers the moment
representation for the original fields $\cY_{\lambda,N}$ in
\eqref{eq:tilted-field}: as
$F_{\beta_{\lambda,N}}(\mathbf 0)=0$ and
$tN\sinh(\beta_{\lambda,N})=tN^\lambda$,
\begin{equation}\label{eq:moment-representation-lambda-specialized}
	\E\sqbra{
		\prod_{i=1}^k
		\cY_{\lambda,N}(t,\phi_i)
	}
	=
	\mathbf E_{\mathbf 0}^{(\beta_{\lambda,N},k)}
	\sqbra{
		\mathscr E_{\beta_{\lambda,N},k}(tN)
		\prod_{i=1}^k
		\phi_i\tttonde{
			N^{-1/2}\tttonde{R_{tN}^i-tN^\lambda}
		}
	}\fstop
\end{equation}
\begin{remark}[Discrete-time viewpoint]\label{rem:discrete-time-moment}
	Proposition~\ref{cor:moments} can also be obtained as the continuous-time
	limit of the discrete-time tilting procedure used in
	\cite{parekh2024hierarchy,drillick_parekh_random_2025}. We record this only
	to connect the present notation with that approach.

	Fix $\epsilon>0$ and set
	\begin{equation}\label{eq:Z-eps-beta-k}
		W_\epsilon^{(\beta,k)}(\mathbf x)
		\eqdef
		\sum_{\mathbf z\in\Z^k}
		e^{F_\beta(\mathbf z)-F_\beta(\mathbf x)}
		\pp_\epsilon^{(k)}(\mathbf x,\mathbf z)
		=
		\mathbf E_{\mathbf x}^{(k)}
		\big[
			e^{F_\beta(\mathbf R_\epsilon)-F_\beta(\mathbf x)}
		\big]\fstop
	\end{equation}
	The discrete-time tilted transition probability in \eqref{eq:discrete} is
	then
	\begin{equation}\label{eq:q-eps-beta-k}
		q_\epsilon^{(\beta,k)}(\mathbf x,\mathbf y)
		\eqdef
		\frac{
			e^{F_\beta(\mathbf y)-F_\beta(\mathbf x)}
			\pp_\epsilon^{(k)}(\mathbf x,\mathbf y)
		}{
			W_\epsilon^{(\beta,k)}(\mathbf x)
		}\fstop
	\end{equation}
	If $T=n\epsilon$, the corresponding discrete-time change of measure gives,
	for every bounded function $\Psi:\Z^k\to\R$,
	\begin{equation}\label{eq:discrete-time-girsanov-moment}
		\begin{aligned}
		&\mathbf E_{\mathbf x}^{(k)}
		\big[
			e^{F_\beta(\mathbf R_T)-F_\beta(\mathbf x)}
			\Psi(\mathbf R_T)
		\big]
		=
		\mathbf E_{\mathbf x}^{(\epsilon,\beta,k)}
		\bigg[
			\exp\bigg\{
				\sum_{m=0}^{n-1}
				\log W_\epsilon^{(\beta,k)}(\mathbf R_{m\epsilon})
			\bigg\}
			\Psi(\mathbf R_T)
		\bigg]\fstop
		\end{aligned}
	\end{equation}
	Here, $\mathbf E_{\mathbf x}^{(\epsilon,\beta,k)}$ denotes expectation for
	the discrete-time chain with transition kernel
	$q_\epsilon^{(\beta,k)}$ and time mesh $\epsilon$.

	Now use the small-time expansion of the $k$-point semigroup. Since the
	$k$-point motion has bounded jump rates and $F_\beta$ has bounded
	increments along allowed jumps,
	\begin{equation}\label{eq:Z-eps-beta-expansion}
		W_\epsilon^{(\beta,k)}(\mathbf x)
		=
		1+
		\epsilon\,
		e^{-F_\beta}(\mathbf x)
		\cL^{(k)}e^{F_\beta}(\mathbf x)
		+
		O_\beta(\epsilon^2)\fstop
	\end{equation}
	uniformly in $\mathbf x\in\Z^k$. Hence
	\begin{equation}\label{eq:log-Z-eps-beta-expansion}
		\log W_\epsilon^{(\beta,k)}(\mathbf x)
		=
		\epsilon\,
		e^{-F_\beta}(\mathbf x)
		\cL^{(k)}e^{F_\beta}(\mathbf x)
		+
		O_\beta(\epsilon^2)\fstop
	\end{equation}
	Moreover, \eqref{eq:q-eps-beta-k} gives
	\begin{equation}
		q_\epsilon^{(\beta,k)}(\mathbf x,\mathbf y)
		=
		\ind_{\{\mathbf x=\mathbf y\}}
		+
		\epsilon \cL^{(\beta,k)}(\mathbf x,\mathbf y)
		+
		O_\beta(\epsilon^2)\comma
		\qquad \mathbf x,\mathbf y\in\Z^k\fstop
	\end{equation}
	Thus, as $\epsilon\downarrow0$, the discrete-time chain with kernel
	$q_\epsilon^{(\beta,k)}$ and mesh $\epsilon$ converges to the continuous-time
	$\beta$-tilted $k$-point motion.

	Taking in \eqref{eq:discrete-time-girsanov-moment}
	\begin{equation}
		\Psi(\mathbf z)
		=
		\exp\set{-kT\tonde{\cosh(\beta)-1}}\,
		\prod_{i=1}^k
		\phi_i\tttonde{
			N^{-1/2}\tttonde{z_i-tN\sinh(\beta)}
		}\comma
		\qquad T=tN\fstop
	\end{equation}
	and using
	$
		e^{-F_\beta}\cL^{\otimes k}e^{F_\beta}
		=
		k\tonde{\cosh(\beta)-1}
	$
	show that the discrete exponential converges to
	\begin{equation}
		\exp\set{
			\int_0^T
			e^{-F_\beta}(\mathbf R_s)
			\ttonde{\cL^{(k)}-\cL^{\otimes k}}
			e^{F_\beta}(\mathbf R_s)
			\,\dd s
		}
		=
		\mathscr E_{\beta,k}(T)\fstop
	\end{equation}
	This recovers exactly the continuous-time moment representation in
	Proposition~\ref{cor:moments}.
\end{remark}
With the moment representation proved in Proposition \ref{cor:moments}, we see that computing 
\begin{equation}\label{eq:k-discrepancy}
e^{-F_\beta}
(\cL^{(k)}-\cL^{\otimes k})e^{F_\beta}\comma\qquad k\ge 2\comma
\end{equation}
and then Taylor-expanding near $\beta=0$ is the next crucial step. This will be the main focus of the subsequent two sections.

\section{Second moment asymptotics}\label{sec:2nd-moment}
In this section, we exploit the moment formulas \eqref{eq:moment-representation-beta} and \eqref{eq:moment-representation-lambda-specialized} to analyze asymptotics of the second moment of the rescaled field $\cY_{\lambda,N}$, and prove that it converges to the second moment of the multiplicative SHE. The main result of the section is the functional CLT in Theorem \ref{th:Dobrushin-k=2}, from which we derive Proposition \ref{pr:second-moment}. The main idea will be to compute the discrepancy in \eqref{eq:k-discrepancy} for $k=2$. As this turns out to be a functional of the gap process from Section \ref{sec:gapprocesses}, our task reduces to prove local-time limit theorems for additive functionals of these processes. However, as anticipated in Section \ref{sec:dobrushin}, a subtle cancellation complicates the analysis.

 \subsection{Two-point discrepancy and Green function}
 We start by computing the discrepancy in \eqref{eq:k-discrepancy} for $k=2$. This is the first occurrence of a functional of the gap process. We remind the reader that $\pi(x)=1$ is a reversible invariant measure for the gap process on $\Bbb Z$.
\begin{lemma}[Two-point discrepancy]\label{lem:two-point-discrepancy}
	Fix $\beta\in\R$ and set
	\begin{equation}\label{eq:c-beta}
		\mathfrak c_\beta\eqdef \cosh(\beta)\fstop
	\end{equation}
	For $\mathbf x=(x_1,x_2)\in\Z^2$, the two-point discrepancy  $e^{-F_\beta}(\mathbf x)
		\ttonde{\cL^{(2)}-\cL^{\otimes2}}e^{F_\beta}(\mathbf x)$ in \eqref{eq:k-discrepancy} with $k=2$
	depends only on the gap
	$x\eqdef x_1-x_2\in \Z$ and is given by
	\begin{equation}\label{eq:D-beta-gap-formula}
    \begin{aligned}
		\tonde{\mathfrak c_\beta-1}
		\set{
			\mathfrak c_\beta \ind_{\{x=0\}}
			-
			\frac12\ind_{\{x=1\}}
			-
			\frac12\ind_{\{x=-1\}}
		}= \tonde{\mathfrak c_\beta-1}\Phi_1(x)+\tonde{\mathfrak c_\beta-1}^2\Phi_2(x)\comma
        \end{aligned}
	\end{equation}
	where
	\begin{equation}\label{eq:f-beta-1-2}
		\Phi_1(x)
		\eqdef
			\ind_{\{x=0\}}
			-
			\frac12\ind_{\{x=1\}}
			-
			\frac12\ind_{\{x=-1\}}\comma\qquad \Phi_2(x)=\car_{\{x=0\}}\fstop
	\end{equation}
    Moreover, the terms $\Phi_1$ and $\Phi_2$ satisfy \begin{equation}\label{eq:mean-zero-mean-one}
    \sum_{x\in \Z}\Phi_1(x)\,\pi(x)=0\qquad \text{and}\qquad \sum_{x\in \Z}\Phi_2(x)\,\pi(x)=1\fstop
\end{equation}
\end{lemma}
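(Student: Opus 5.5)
The proof is a direct generator computation, organized bond by bond. Both $\cL^{(2)}$ and $\cL^{\otimes 2}$ are sums over bonds $\{a,a+1\}$ of $\Z$. In $\cL^{(2)}$ each bond carries one clock of rate one, and at a ring every particle on an endpoint independently moves to the other endpoint with probability $1/2$. In $\cL^{\otimes 2}$ each particle has its own independent clocks of rate one per bond, with the same probability-$1/2$ rule.

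\emph{Reduction to shared bonds.} If no particle sits on an endpoint of a bond, that bond contributes nothing to either generator. If exactly one particle sits on an endpoint, its contribution to $e^{-F_\beta}\cL e^{F_\beta}$ is the same in both generators, namely $\frac12(e^{\pm\beta}-1)$. So the discrepancy comes only from bonds carrying both particles. Such a bond exists only when $|x_1-x_2|\le1$, which already shows that the discrepancy depends only on the gap $x$ and vanishes for $|x|\ge2$.

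\emph{Case $x=0$.} Both particles sit at $a$, and the bonds $\{a,a+1\}$ and $\{a-1,a\}$ each contain both of them. For the right bond, write $\xi_1,\xi_2\in\{0,1\}$ for the independent Bernoulli$(1/2)$ indicators that each particle jumps. The coupled contribution is $\E[e^{\beta(\xi_1+\xi_2)}]-1=\big(\tfrac{1+e^\beta}{2}\big)^2-1$, while the independent contribution is $2\big(\tfrac{1+e^\beta}{2}-1\big)$. Their difference is $\big(\tfrac{e^\beta-1}{2}\big)^2$. The left bond gives $\big(\tfrac{e^{-\beta}-1}{2}\big)^2$ in the same way. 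Adding the two and using $\cosh(2\beta)=2\mathfrak c_\beta^2-1$ gives
\[
\tfrac14\big(2\cosh(2\beta)-4\mathfrak c_\beta+2\big)=\mathfrak c_\beta(\mathfrak c_\beta-1).
\]

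\emph{Case $x=\pm1$.} By symmetry take $x=1$, so the particles sit at $a+1$ and $a$ and share the single bond $\{a,a+1\}$. The first particle moves by $-1$ and the second by $+1$. The coupled contribution is $\frac{1+e^{-\beta}}{2}\cdot\frac{1+e^{\beta}}{2}-1$, and the independent one is $\frac{e^{-\beta}-1}{2}+\frac{e^{\beta}-1}{2}$. Their difference is
\[
\tfrac14(e^{-\beta}-1)(e^{\beta}-1)=\tfrac14(2-2\mathfrak c_\beta)=-\tfrac12(\mathfrak c_\beta-1).
\]

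Collecting the three cases gives the left-hand side of \eqref{eq:D-beta-gap-formula}. The decomposition on the right-hand side follows from the identity $\mathfrak c_\beta(\mathfrak c_\beta-1)=(\mathfrak c_\beta-1)+(\mathfrak c_\beta-1)^2$ applied to the $\ind_{\{x=0\}}$ term. The identities \eqref{eq:mean-zero-mean-one} are immediate since $\pi$ is the counting measure: $\sum_x\Phi_1(x)=1-\tfrac12-\tfrac12=0$ and $\sum_x\Phi_2(x)=1$.

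There is no real obstacle here. The only care needed is in the bookkeeping: counting both shared bonds in the case $x=0$ and only the single shared bond in the case $x=\pm1$.
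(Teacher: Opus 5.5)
Your proof is correct and is essentially the paper's argument. Both compare $e^{-F_\beta}\cL^{(2)}e^{F_\beta}$ with $e^{-F_\beta}\cL^{\otimes 2}e^{F_\beta}$ directly in the cases $x=0$, $|x|=1$, $|x|\ge 2$; your bond-by-bond factorization of each shared-bond discrepancy as $(u-1)(v-1)$ is a cleaner bookkeeping of the same computation, the $k=2$ instance of the paper's later $\Psi_\beta$. You also explicitly check the splitting $\mathfrak c_\beta(\mathfrak c_\beta-1)=(\mathfrak c_\beta-1)+(\mathfrak c_\beta-1)^2$ and the two mean identities, which the paper leaves implicit.
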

\begin{proof}
	Write $\mathfrak c=\mathfrak c_\beta$. Since $\cL^{(2)}$ and $\cL^{\otimes2}$ agree
	when the two coordinates are at distance at least two,
	the discrepancy vanishes if $|x_1-x_2|\ge2$.

	If $x_1=x_2$, then the two particles are affected simultaneously by the two
	adjacent clocks. The tilted exponential increments are $\pm\beta$ for the
	single-particle moves and $\pm2\beta$ for the simultaneous moves. Hence,
	\begin{equation}
		e^{-F_\beta}\cL^{(2)}e^{F_\beta}(\mathbf x)
		=
		\frac12(e^\beta-1)
		+
		\frac12(e^{-\beta}-1)
		+
		\frac14(e^{2\beta}-1)
		+
		\frac14(e^{-2\beta}-1)\fstop
	\end{equation}
	Therefore,
	\begin{equation}
		e^{-F_\beta}\cL^{(2)}e^{F_\beta}(\mathbf x)
		=
		(\mathfrak c-1)+\frac12\tonde{\cosh(2\beta)-1}
		=
		\mathfrak c^2+\mathfrak c-2\fstop
	\end{equation}
	In view of \eqref{eq:independent-k-eigenvalue}, we get
	\begin{equation}
		e^{-F_\beta}(\mathbf x)\ttonde{\cL^{(2)}-\cL^{\otimes 2}}e^{F_\beta}(\mathbf x)
		=
		\mathfrak c\tonde{\mathfrak c-1}\comma
		\qquad\text{if}\ x_1=x_2\fstop
	\end{equation}

	It remains to consider $|x_1-x_2|=1$. By symmetry it is enough to take
	$x_1=x_2+1$. The outer clocks give the same contribution as independent
	motions. The common clock can either move exactly one particle, producing
	exponential increments $\pm\beta$, or swap the two particles, producing
	increment $0$. Hence,
	\begin{equation}
		e^{-F_\beta}\cL^{(2)}e^{F_\beta}(\mathbf x)
		=
		(\mathfrak c-1)+\frac12(\mathfrak c-1)
		=
		\frac32(\mathfrak c-1)\fstop
	\end{equation}
	Subtracting the independent value $2(\mathfrak c-1)$ gives
	\begin{equation}
		e^{-F_\beta}(\x)\ttonde{\cL^{(2)}-\cL^{\otimes 2}}e^{F_\beta}(\x)
		=
		-\frac12(\mathfrak c-1)\comma
		\qquad\text{if}\  |x_1-x_2|=1\fstop
	\end{equation}
	This provides the formula in \eqref{eq:D-beta-gap-formula}. 
\end{proof}

For later purposes, we construct the Green function for the two-point gap process. 

\begin{lemma}[Green function of the tilted gap process]\label{lem:green-gap-beta}
	Fix $\beta\in\R$ and recall $\mathfrak c_\beta\eqdef \cosh(\beta)$ from \eqref{eq:c-beta}.
	Let $\cL_\beta^{\rm gap}$ be the generator of the tilted two-point gap
	process from Proposition~\ref{pr:gapprocess-tilt}. Define, for
	$x,y\in\Z$, with the convention $\operatorname{sgn}(0)\eqdef0$,
	\begin{equation}\label{eq:green-gap-beta-explicit}
		G_\beta(x,y)
		\eqdef
		\frac{|x-y|}{2\mathfrak c_\beta}
		+
		\frac{\mathfrak c_\beta-1}
		{2\mathfrak c_\beta(\mathfrak c_\beta+1)}
		\abs{\operatorname{sgn}(x)-\operatorname{sgn}(y)}
		+
		\frac{1-\ind_{\{x=y=0\}}}
		{\mathfrak c_\beta(\mathfrak c_\beta+1)}\fstop
	\end{equation}
	Then, $G_\beta$ is symmetric, nonnegative, and, for every $x, y\in\Z$,
	\begin{equation}\label{eq:green-gap-beta-equation}
		\cL_\beta^{\rm gap}G_\beta(x,\emparg)(y) =\ind_{\{x=y\}} = \cL_\beta^{\rm gap}G_\beta(\emparg,y)(x)
		\fstop
	\end{equation}
	In particular, if $f:\Z\to\R$ has finite support and satisfies $\sum_{x\in\Z} f(x)\,\pi(x)=0$,
	then
	\begin{equation}
		G_\beta f(x)
		\eqdef
		\sum_{y\in\Z}G_\beta(x,y)f(y)\comma
		\qquad x\in\Z\comma
	\end{equation}
	is constant on $\Z_{>0}\setminus {\rm supp}(f)$ and $\Z_{<0}\setminus {\rm supp}(f)$, bounded, and satisfies $\cL_\beta^{\rm gap}G_\beta f=f$.
\end{lemma}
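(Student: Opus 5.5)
The plan is to verify \eqref{eq:green-gap-beta-equation} by direct case analysis on the explicit formula \eqref{eq:green-gap-beta-explicit}, using the rates of Proposition~\ref{pr:gapprocess-tilt}. We use the sign convention in which $\cL^{\rm gap}_\beta G_\beta$ equals $+\ind_{\{x=y\}}$; there is no minus sign, because $G_\beta$ grows linearly. Write $\mathfrak c=\mathfrak c_\beta$.

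Symmetry of $G_\beta$ is immediate, since each of the three summands is symmetric in $(x,y)$. Nonnegativity is also immediate, since $\mathfrak c\ge1$ makes every summand nonnegative.

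\textbf{Step 1: the first identity.} Fix $x$ and apply $\cL^{\rm gap}_\beta$ in the variable $y$, splitting into cases on $y$.
\begin{itemize}
\item \emph{Bulk, $|y|\ge2$.} The sites $y\pm1$ have the same sign as $y$ and are nonzero, so the $\operatorname{sgn}$ term and the indicator term are locally constant. Only $|x-y|/(2\mathfrak c)$ contributes. The walk jumps to each neighbor at rate $\mathfrak c$, so we get $\mathfrak c\,\Delta_y|x-y|/(2\mathfrak c)=\ind_{\{x=y\}}$, because $\Delta|\cdot|$ equals $2$ at $0$ and $0$ elsewhere.
\item \emph{Near the origin, $y\in\{0,\pm1\}$.} Here the slow rates $\tfrac12\mathfrak c$ and the teleportation $\pm1\to\mp1$ at rate $1/4$ enter. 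The point of the correction terms is that they compensate exactly for these defects.
\begin{itemize}
\item For $y=0$, the generator gives $\tfrac12\mathfrak c\,[G(x,1)+G(x,-1)-2G(x,0)]$. The indicator term produces the jump from $0$ when $x=0$. The $\operatorname{sgn}$ term produces a contribution that cancels the missing half of the Laplacian.
\item For $y=1$, the generator gives $\mathfrak c[G(x,2)-G(x,1)]+\tfrac12\mathfrak c[G(x,0)-G(x,1)]+\tfrac14[G(x,-1)-G(x,1)]$. We evaluate this separately for $x\ge2$, $x=1$, $x=0$, $x=-1$, and $x\le-2$; the finitely many sub-cases are each an identity in $\mathfrak c$.
\item The case $y=-1$ follows from the reflection symmetry $G(-x,-y)=G(x,y)$ together with the reflection invariance of the rates.
\end{itemize}
\end{itemize}
I expect this sub-case bookkeeping at $y=\pm1$ to be the only delicate step. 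There the three rates $\mathfrak c$, $\tfrac12\mathfrak c$, $\tfrac14$ must combine with the coefficients $\frac{\mathfrak c-1}{2\mathfrak c(\mathfrak c+1)}$ and $\frac1{\mathfrak c(\mathfrak c+1)}$ to give exactly $\ind_{\{x=y\}}$. The cleanest route is to clear the denominator $2\mathfrak c(\mathfrak c+1)$ and check polynomial identities.

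\textbf{Step 2: the second identity.} The rates of Proposition~\ref{pr:gapprocess-tilt} are symmetric, $\cL^{\rm gap}_\beta(x,z)=\cL^{\rm gap}_\beta(z,x)$. Combined with $G(z,y)=G(y,z)$, this gives $\cL^{\rm gap}_\beta G_\beta(\emparg,y)(x)=\sum_z\cL^{\rm gap}_\beta(x,z)G_\beta(y,z)$. This is the first identity with the roles of $x$ and $y$ exchanged, so it equals $\ind_{\{x=y\}}$.

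\textbf{Step 3: consequences for $G_\beta f$.} Since $f$ has finite support, $\cL^{\rm gap}_\beta G_\beta f=\sum_y f(y)\,\cL^{\rm gap}_\beta G_\beta(\emparg,y)=f$ by linearity. For $x>0$ we have $\operatorname{sgn}(x)=1$ and $1-\ind_{\{x=y=0\}}=1$, so those two terms do not depend on $x$. Moreover $\sum_y f(y)=0$ kills the $x$-linear part of $\sum_y|x-y|f(y)$ once $x$ lies to the right of all of ${\rm supp}(f)$. The same argument applies on the negative side.

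This shows constancy on the unbounded components of $\Z_{>0}\setminus{\rm supp}(f)$ and of $\Z_{<0}\setminus{\rm supp}(f)$. Constancy on every point of $\Z_{>0}\setminus{\rm supp}(f)$, as literally stated, does not hold for a general finite support: between support points $\sum_y|x-y|f(y)$ is linear in $x$ with nonzero slope in general. It does hold when ${\rm supp}(f)$ has no gaps on each side of the origin, since then there are no bounded components. This is the case for $f=\Phi_1$, whose support is $\{-1,0,1\}$, and I would prove the lemma in this form. Boundedness follows because $G_\beta f$ takes finitely many values inside the hull of the support and is constant outside it.
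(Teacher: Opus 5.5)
Your proposal is correct and is essentially the paper's proof: direct substitution of the rates of Proposition~\ref{pr:gapprocess-tilt} into \eqref{eq:green-gap-beta-explicit}, symmetry of $G_\beta$ for the second identity, and cancellation of the linear growth via $\sum_x f(x)=0$; the only cosmetic difference is that the paper first drops the additive constant $(\mathfrak c_\beta(\mathfrak c_\beta+1))^{-1}$, which $\cL_\beta^{\rm gap}$ annihilates. Your caveat about the constancy claim is also right: the paper's own argument only shows that $G_\beta f$ is \emph{eventually} constant on each side (for instance $f=\ind_{\{1\}}-2\,\ind_{\{3\}}+\ind_{\{5\}}$ gives $G_\beta f(2)=1/\mathfrak c_\beta\neq 0=G_\beta f(6)$), which is exactly what is used later in Theorems~\ref{th:Dobrushin-k=2} and~\ref{th:Dobrushin-k>2} and which holds literally for $f=\Phi_1$.
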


\begin{proof} 
	Write $\mathfrak c=\mathfrak c_\beta$, and set
	\begin{equation}
		\mathfrak a=\mathfrak a_\beta\eqdef
		\tonde{\mathfrak c-1}/\tonde{2\mathfrak c(\mathfrak c+1)}\comma
		\qquad
		\mathfrak b=\mathfrak b_\beta\eqdef
		\tonde{\mathfrak c(\mathfrak c+1)}^{-1}\fstop
	\end{equation}
	Since $\cL_\beta^{\rm gap}$ annihilates constants, it is enough to prove the
	claim for
	\begin{equation}
		G'_\beta(x,y)
		\eqdef
		\frac{|x-y|}{2\mathfrak c}
		+
		\mathfrak a\abs{\operatorname{sgn}(x)-\operatorname{sgn}(y)}
		-
		\mathfrak b\ind_{\{x=y=0\}}\fstop
	\end{equation}
	Indeed, $G_\beta(x,y)=G'_\beta(x,y)+\mathfrak b$.

	Fix $x\in\Z$ and view $G'_\beta(x,\emparg)$ as a function of $y$. If
	$y\notin\set{-1,0,1}$, then the sign and indicator terms are locally
	constant in $y$. Hence, away from $y=x$, the function is affine and
	\begin{equation}
		\cL_\beta^{\rm gap}G'_\beta(x,\emparg)(y)=0\fstop
	\end{equation}
	If $y=x$ and $|x|\ge2$, then the only contribution comes from
	$|x-y|$, and
	\begin{equation}
		\cL_\beta^{\rm gap}G'_\beta(x,\emparg)(x)
		=
		\mathfrak c\tonde{\frac{1}{2\mathfrak c}+\frac{1}{2\mathfrak c}}
		=
		1\fstop
	\end{equation}

	It remains to check $y\in\set{-1,0,1}$. This is a direct substitution into
	the rates of Proposition~\ref{pr:gapprocess-tilt}. Indeed, one obtains 
	$\cL_\beta^{\rm gap}G'_\beta(x,\emparg)(y)=\car_{\{x=y\}}$. We spell out the details of a few cases, and leave the remaining ones to the reader. For instance, if $x\le-2$, then at $y=-1$ the contribution of
	$|x-y|/(2\mathfrak c)$ is $(1-\mathfrak c)/(4\mathfrak c)$, while the sign term contributes
	\begin{equation}
		\mathfrak a\tonde{\frac{\mathfrak c}{2}+\frac12}
		=
		\frac{\mathfrak c-1}{4\mathfrak c}\comma
	\end{equation}
	so the two terms cancel. The cases $y\in \{0,1\}$ are simpler. If $x=0$, then at $y=0$ the three contributions are
	\begin{equation}
		\frac12\comma
		\frac{\mathfrak c-1}{2(\mathfrak c+1)}\comma
		\frac{1}{\mathfrak c+1}\comma
	\end{equation}
	whose sum is $1$. 
    
    This proves the first identity in \eqref{eq:green-gap-beta-equation}.
	As the formula \eqref{eq:green-gap-beta-explicit} is symmetric in $(x,y)$ and
	 $\cL_\beta^{\rm gap}$ is self-adjoint with respect to counting measure, the second identity $\cL_\beta^{\rm gap}G_\beta'(\emparg,y)(x)=\car_{\{x=y\}}$ in 
	\eqref{eq:green-gap-beta-equation} follows as well.

	Finally, if $f$ has finite support and zero mean with respect to the counting
	measure, then the linear growth of $G_\beta(x,y)$ in $x$ cancels in
	$\sum_yG_\beta(x,y)f(y)$. Hence, $G_\beta f$ is eventually constant and bounded. Using
	\eqref{eq:green-gap-beta-equation},
	\begin{equation}
		\cL_\beta^{\rm gap}G_\beta f(x)
		=
		\sum_{y\in\Z}
		\cL_\beta^{\rm gap}G_\beta(\emparg,y)(x)f(y)
		=
		\sum_{y\in\Z}\ind_{\{x=y\}}f(y)
		=
		f(x)\fstop
	\end{equation}
	This concludes the proof.
\end{proof}

\begin{remark}\label{rem:green-function-untilted} 
As $\beta\to 0$, $G_\beta$ in \eqref{eq:green-gap-beta-explicit} pointwise converges to
\begin{equation}
    G(x,y)\eqdef \frac{|x-y|}{2}+\frac{1-\car_{\{x=y=0\}}}2\comma\qquad x, y\in \Z\comma
\end{equation}
the Green function of the untilted gap process.
Hence,  for any finite-support mean-zero function $f:\Z\to \R$ and $|\beta|\le 1$,  one gets
\begin{equation}
\sup_{x\in \Z}\abs{G f(x)-G_\beta f(x)}\le C_f\beta^2\comma\quad \text{with}\   Gf\eqdef\sum_{y\in \Z}G(\emparg,y)f(y)\fstop
\end{equation}

Further, let $\Gamma_\beta$ (with $\Gamma=\Gamma_0$) denote the corresponding $\beta$-tilted carré-du-champ, that is, 
\begin{equation}\label{eq:carre-du-champ-def}
\Gamma_\beta(h_1,h_2)\eqdef \cL_\beta^{\rm gap}(h_1 h_2)-h_1\cL_\beta^{\rm gap} h_2 - h_2\cL_\beta^{\rm gap}h_1\comma\qquad h_1,h_2:\Z\to \R\ \text{bounded}\fstop
\end{equation}
Then, there exists a $\varrho_f >0$ such that ${\rm supp}(f)\subset [-\varrho_f,\varrho_f]$ and
\begin{equation}\label{eq:carre-du-champ-conv}
|\Gamma(Gf,Gf)(x)-\Gamma_\beta(G_\beta f,G_\beta f)(x)|\le C_f\beta^2 \car_{|x|\le \varrho_f}\comma\qquad x \in \Z\fstop
\end{equation}
\end{remark}

\subsection{Technical lemmas from \cite{parekh2024hierarchy,drillick_parekh_random_2025}}
The next two results are continuous-time analogues of the results proved in the discrete-time setting in
\cite{parekh2024hierarchy,drillick_parekh_random_2025}. They concern additive functionals of the
tilted $k$-point motion, with $k\ge2$. Remark that here we do
not distinguish between mean-zero and nonzero-mean functionals.

The only structural input needed from the model is the short-range interaction (SRI)
condition in \cite{drillick_parekh_random_2025}. In the present setting of the averaging process, this condition is immediate: away from a fixed
neighborhood of the collision hyperplanes $\{x_i=x_j\}$, the generator of the
$\beta$-tilted $k$-point motion agrees with that of $k$ independent tilted
nearest-neighbor walks, while near the collision hyperplanes the rates are
modified only locally and remain uniformly bounded for $\beta$ in a neighborhood
of zero. Thus, the discrete-time arguments carry over to continuous time by
replacing transition kernels with semigroup estimates and sums with time
integrals. For convenience, we record
the needed statements directly in the present continuous-time setup.

We first state the occupation-time bound used below for tightness and uniform
integrability. The main estimate, namely \eqref{eq:additive-functional-moment}, is the continuous-time analogue of \cite[Eq.\ (B.4)]{drillick_parekh_random_2025}.

\begin{lemma}[\cite{drillick_parekh_random_2025}]\label{vbound}
    Fix $k \geq 2$ and a finite-support function $h:\Z\to\R$. Then, there exist $C = C(k,h),\varepsilon = \varepsilon(k,h) > 0$ such that, for all $\beta  \in (-\varepsilon, \varepsilon)$, $0\le s\le t$, $1 \leq i, j \leq k$, and $p\ge 1$, 
    \begin{equation}\label{eq:additive-functional-moment}\sup_{\x\in\Z^k} \mathbf E_\x^{(\beta,k)} \bigg[ \bigg( \int_s^t |h(R_r^i -R_r^j )|\,{\rm d}r \bigg)^p\bigg]^{1/p} \leq C\sqrt p \abs{t-s}^{1/2}\fstop
    \end{equation}
    As a consequence, Taylor expanding the exponential yields, for every $a\ge 0$,
    \begin{equation}\label{eq:additive-functional-exp}\sup_{t>0} \sup_{\x\in\Bbb Z^k} \mathbf E_\x^{(\beta,k)}\bigg[ \exp\bigg\{a t^{-1/2} \int_0^t |h(R_r^i -R_r^j )|\,{\rm d}r \bigg\}\bigg]<\infty\fstop
    \end{equation}
\end{lemma}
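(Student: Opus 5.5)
Both parts of Lemma~\ref{vbound} are deduced from a one-step occupation estimate, which is then iterated with the Markov property. Fix $i\neq j$ (for $i=j$ the integrand is the constant $|h(0)|$ and the claim is trivial on bounded intervals; in the applications only $i\ne j$ matters). Let $\varrho$ be such that ${\rm supp}(h)\subset[-\varrho,\varrho]$. Then $|h(R^i_r-R^j_r)|\le \|h\|_\infty \car_{\{|R^i_r-R^j_r|\le\varrho\}}$, so it suffices to control the occupation time of the slab $\{|x_i-x_j|\le\varrho\}$.

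\textbf{Step 1: a uniform first-moment bound.} The key estimate is
\begin{equation}
\sup_{\x\in\Z^k}\mathbf E^{(\beta,k)}_\x\Big[\int_0^u \car_{\{|R^i_r-R^j_r|\le\varrho\}}\,\dd r\Big]\le C\,(1+u)^{1/2},\qquad u\ge0,
\end{equation}
uniformly in $|\beta|<\varepsilon$. It reduces to a heat-kernel bound $\sup_\x\mathbf P^{(\beta,k)}_\x(|R^i_r-R^j_r|\le\varrho)\le C(1+r)^{-1/2}$.

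For $k=2$ this is direct. By Proposition~\ref{pr:gapprocess-tilt}, the gap is a reversible walk with conductances bounded above and below uniformly in $|\beta|\le1$, and the counting measure is its reversible measure. Nash/Carne–Varopoulos-type on-diagonal bounds then give $p_r(x,y)\le C(1+r)^{-1/2}$.

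For general $k$, the pair $(R^i,R^j)$ is not Markov. Here I would invoke the SRI structure. Outside a fixed neighborhood of the collision hyperplanes, the tilted $k$-point motion moves like independent tilted walks. After subtracting the common drift $\sinh(\beta)$, the gap $R^i-R^j$ is a martingale away from collisions with the other particles, has bounded jumps, and has quadratic variation rate bounded below, uniformly in $\beta$ small. A local CLT/anti-concentration bound for such martingales with bounded, non-degenerate jump rates then gives the $r^{-1/2}$ decay. This is exactly the continuous-time version of the heat-kernel input in \cite{drillick_parekh_random_2025}, obtained by replacing sums with integrals. I expect this uniform anti-concentration for $k\ge3$ (with $\beta\neq0$) to be the main obstacle, and I would import it from the SRI theory rather than reprove it.

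\textbf{Step 2: moments via Kac's iteration.} Set $V_r=|h(R^i_r-R^j_r)|$ and $A=\int_s^tV_r\,\dd r$. Expanding the $p$-th power over ordered times and applying the Markov property repeatedly (a Khas'minskii/Kac argument) gives, for integer $p$,
\begin{equation}
\mathbf E_\x[A^p]=p!\,\mathbf E_\x\Big[\int_{s<r_1<\dots<r_p<t}\prod_\ell V_{r_\ell}\,\dd r\Big]\le p!\,\Big(\sup_\y \mathbf E_\y\big[\textstyle\int_0^{t-s}V\big]\Big)^p\le p!\,(C|t-s|^{1/2})^p .
\end{equation}
This yields only $C p|t-s|^{1/2}$ after taking $p$-th roots, and also fails to give $|t-s|^{1/2}$ for $t-s<1$. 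The short-time issue is handled separately: for $t-s\le1$ one has $A\le\|h\|_\infty|t-s|\le\|h\|_\infty|t-s|^{1/2}$ trivially.

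To reach $\sqrt p$ in the long-time regime, I would refine the iteration using the sharper integrated kernel bound. Iterating with the heat-kernel bound yields nested integrals $\int_{0<r_1<\dots<r_p<T}\prod_\ell (r_\ell-r_{\ell-1})^{-1/2}\,\dd r=\Gamma(1/2)^pT^{p/2}/\Gamma(p/2+1)$. Hence
\begin{equation}
\mathbf E[A^p]\le p!\,C^pT^{p/2}/\Gamma(p/2+1)\approx (C'\sqrt p\,\sqrt T)^p ,
\end{equation}
which gives exactly the $C\sqrt p$ bound. Non-integer $p$ then follow by Hölder interpolation.

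\textbf{Step 3: the exponential bound \eqref{eq:additive-functional-exp}.} This follows by expanding the exponential:
\begin{equation}
\mathbf E\Big[e^{a t^{-1/2}A}\Big]\le\sum_p \frac{a^p (C\sqrt p)^p}{p!}<\infty,
\end{equation}
where the series converges since $p^{p/2}/p!$ decays superexponentially. The bound is uniform in $t$ and $\x$.
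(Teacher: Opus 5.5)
Your Steps 2 and 3 are correct as conditional arguments, and so is your $k=2$ case via Nash. Given $\sup_{\y}\mathbf P^{(\beta,k)}_{\y}(|R^i_r-R^j_r|\le\varrho)\le C(1+r)^{-1/2}$, the Dirichlet integral gives $\mathbf E[A^p]\le p!\,(C\sqrt{\pi})^p|t-s|^{p/2}/\Gamma(p/2+1)$, hence $C\sqrt p\,|t-s|^{1/2}$. The exponential bound then follows by Taylor expansion, exactly as in the paper.

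The gap is Step 1 for $k\ge 3$ and $\beta\neq0$, which is the entire difficulty of the lemma. For $\beta=0$ the pair $(R^i,R^j)$ is itself a two-point motion, so you are back to $k=2$. For $\beta\neq0$ you give no argument, and the heuristic you sketch does not work:
\begin{enumerate}[(i)]
\item Near a third particle $\ell$, the tilted rates of the moves of $R^i,R^j$ are multiplied by factors $\prod_\ell\frac12(1+e^{\pm\beta})=1+O(\beta)$. So $R^i-R^j$ carries an $O(\beta)$ drift precisely on the set you need to control.
\item Even for genuine martingales, bounded jumps together with a quadratic-variation rate bounded above and below do not imply $\mathbf P(|M_r|\le\varrho)\le Cr^{-1/2}$. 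An adapted choice of small variance near the target and large variance away from it gives a strictly slower decay.
\end{enumerate}
Nor can the bound be imported. The paper points out that the heat-kernel estimates of \cite{drillick_parekh_random_2025} cover only the untilted, reversible chain. Its own tilted estimate, Lemma~\ref{lem:HKUB}, loses a factor $t^{\delta}$, which would produce $|t-s|^{1/2+\delta}$ and destroy the uniformity in $t$ in \eqref{eq:additive-functional-exp}. It is also restricted to $|\beta|\le AN^{-1/8}$, and it is derived from Lemma~\ref{vbound} itself via \eqref{eq:collision-exp-bound}, so invoking it would be circular.

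The missing idea is that no pointwise bound is needed. Set $A_u\eqdef\int_0^u\sum_{i<j}\car_{\{|R^i_r-R^j_r|\le\varrho\}}\,\dd r$ with $\varrho\ge1$. The uniform integrated bound $\sup_{\x}\mathbf E^{(\beta,k)}_{\x}[A_u]\le C\sqrt u$ follows from a Tanaka-type Lyapunov function plus absorption, and this is exactly where the smallness of $\varepsilon(k,h)$ enters.
\begin{enumerate}[(a)]
\item Let $g_\varrho(x)\eqdef\sum_{|y|\le\varrho}G_\beta(x,y)$, with $G_\beta$ from Lemma~\ref{lem:green-gap-beta}, so that $\cL_\beta^{\rm gap}g_\varrho=\car_{\{|x|\le\varrho\}}$ and $g_\varrho$ is Lipschitz. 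Put $\Theta(\x)\eqdef\sum_{i<j}g_\varrho(x_i-x_j)$.
\item By the rate computation in (i), $\cL^{(\beta,k)}\Theta\ge\sum_{i<j}\car_{\{|x_i-x_j|\le\varrho\}}-C|\beta|\sum_{i<j}\car_{\{|x_i-x_j|\le1\}}$.
\item Since $\Theta$ is Lipschitz, $\mathbf E_{\x}[\Theta(\mathbf R_u)]-\Theta(\x)\le C\sum_i\mathbf E_{\x}|R^i_u-x_i-u\sinh\beta|\le C\sqrt u+C|\beta|\,\mathbf E_{\x}[A_u]$. The second inequality holds because the drift of each coordinate differs from $\sinh\beta$, by $O(\beta)$, only when another particle is within distance one.
\item Dynkin's formula and $|\beta|<\varepsilon(k,\varrho)$ then absorb both error terms and give $\mathbf E_{\x}[A_u]\le C\sqrt u$ uniformly in $\x$.
\end{enumerate}
Finally, run your Kac iteration with this integrated bound in place of the pointwise one. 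Use the Markov property and the identity $\int_0^u f(u-r)\,\dd A_r=\int_0^u f'(u-v)A_v\,\dd v$ for $f(w)=w^{(m-1)/2}$, $m\ge2$. Inductively, the $m$-fold iterated integral is bounded by $(C\Gamma(3/2))^m u^{m/2}/\Gamma(m/2+1)$. This has the same form as your Dirichlet bound, so it yields $C\sqrt p\,|t-s|^{1/2}$, and then \eqref{eq:additive-functional-exp} via your Step 3.
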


The second result is the corresponding invariance principle for additive
functionals of pair gaps. It is the continuous-time counterpart of
\cite[Theorem~4.6]{drillick_parekh_random_2025}. In particular, tightness of the additive functional in $\cC([0,\infty),\R)$ follows from the bounds in Lemma \ref{vbound}. The identification of subsequential limits is the Krylov--Bogoliubov
argument used in \cite[Proposition~3.10]{parekh2024hierarchy}. We remark that in the present setting, the invariant measure $\pi^{\rm inv}$ is the counting measure $\pi$ on $\Z$.
\begin{lemma}[\cite{drillick_parekh_random_2025}]\label{vprocess}
     Fix  $k \geq 2$, a finite-support function $h:\Z\to\R$, and a sequence $\beta=\beta_N\to 0$ as $N\to \infty$. Consider $\x^N\in \Z^k$ such that $N^{-1/2}\x^N\to \mathbf z \in \R^k$ as $N\to \infty$. Then, for all $1\le i,j\le k$, $i\neq j$, the pair
     \begin{equation}
      \tonde{N^{-1/2}\tttonde{R^i_{tN} - R^j_{tN}},N^{-1/2} \textstyle{\int_0^{tN}} h(R^i_s - R^j_s)\,\dd s }_{t\ge 0}\comma\quad\text{under}\ \mathbf P_{\x^N}^{(\beta_N,k)}\comma
    \end{equation} weakly converges in  $\cD([0,\infty);\R^2)$ as $N\to \infty$ to 
    \begin{equation}\label{eq:W+local-time}
    (W_t, \gamma^2(h)\, L_t^W )_{t\ge 0}\comma\quad \text{with}\ \gamma^2(h)\eqdef \textstyle{\sum_{x\in \Z}}\, h(x)\comma
    \end{equation}where $W$ is a Brownian motion with $W_0=z_i-z_j\in \R$, and $L^W$ is its local time at zero. 
\end{lemma}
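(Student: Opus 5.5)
The plan is to prove Lemma~\ref{vprocess} in three steps: tightness of the pair, convergence of the gap coordinate, and identification of the additive functional through a Krylov--Bogoliubov/occupation-density argument.

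\emph{Step 1: tightness.} Write $X^N_t\eqdef N^{-1/2}(R^i_{tN}-R^j_{tN})$ and $V^N_t\eqdef N^{-1/2}\int_0^{tN}h(R^i_s-R^j_s)\,\dd s$. For the second coordinate, apply \eqref{eq:additive-functional-moment} with $p$ large after the time change $s\mapsto sN$. This gives $\mathbf E[|V^N_t-V^N_s|^p]\le C^p p^{p/2}|t-s|^{p/2}$ uniformly in $N$ and in the starting point. Kolmogorov--Chentsov then yields $\cC$-tightness of $V^N$.

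For $X^N$, I would use the Dynkin martingale decomposition under the tilted law. Away from the collision set $\{|R^i-R^j|\le1\}$ the pair moves as two independent $\beta_N$-tilted walks, so the drifts cancel in the difference. Near collisions the generator is only locally perturbed by bounded amounts. Hence the compensator of $X^N$ is $N^{-1/2}\int_0^{tN}b_N(\mathbf R_s)\,\dd s$ with $b_N$ bounded and supported on $\{|R^i-R^j|\le 1\}$. This term is again controlled by \eqref{eq:additive-functional-moment} and is $O(1)$ and $\cC$-tight. The martingale part has jumps of size $O(N^{-1/2})$ and predictable quadratic variation $2\mathfrak c_{\beta_N}t+O(V$-type terms$)$. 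Aldous' criterion therefore gives tightness in $\cD$, with continuous limit points.

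\emph{Step 2: limit of the gap coordinate.} By the martingale FCLT, the martingale part converges to a Brownian motion of diffusivity $2$. This variance is consistent with the gap process in Proposition~\ref{gapprocess}, whose jump rate is $1$ to each side away from the origin. It does not match the normalization $\gamma^2(h)L^W$ in \eqref{eq:W+local-time} unless $L^W$ denotes the local time of $W$ normalized accordingly, so I would fix that convention to match.

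The drift term requires care. By the symmetry $x\mapsto -x$ of the relevant rates, I expect the averaged drift at the collision set to vanish. In the $k=2$ case this is exactly Proposition~\ref{pr:gapprocess-tilt}, where the gap is symmetric. For $k\ge3$, the extra particles perturb the rates only when three particles are close, and these triple-collision times are negligible after dividing by $N^{1/2}$. Applying Step 3 to the drift function then shows that the drift contributes $\sum_x b(x)\cdot L^W=0$.

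\emph{Step 3: identification of the additive functional (main step).} Take a subsequential limit $(W,V)$. Off a neighborhood of $\{W=0\}$ the process $V$ is constant, since $h$ has finite support and the gap is macroscopic there. So $V$ increases only on $\{s:W_s=0\}$. It remains to show $V=\gamma^2(h)L^W$, and I would do this via a Green/corrector function.

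For mean-zero $h$, the bounded corrector $G_\beta h$ from Lemma~\ref{lem:green-gap-beta} (in the $k=2$ case) gives
\begin{equation}
\int_0^{tN}h\,\dd s=G_\beta h(X_{tN})-G_\beta h(X_0)-M_{tN},
\end{equation}
with $M$ a martingale whose quadratic variation is itself a compactly supported additive functional. Hence the integral is $O_{\mathbf P}(N^{1/4})$, and dividing by $N^{1/2}$ gives the limit $0=\gamma^2(h)L^W$.

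For general $h$, write $h=\gamma^2(h)\car_{\{0\}}+(h-\gamma^2(h)\car_{\{0\}})$. This reduces the claim to a single function, say $\car_{\{0\}}$. For that function I would use Tanaka-type identification: apply the Dynkin formula to $|X|$. In the discrete setting, $\cL|x|$ is supported near $0$ and has total mass matching the local-time normalization. Passing to the limit in the decomposition and comparing with Tanaka's formula $|W_t|=|W_0|+\int\operatorname{sgn}(W)\,\dd W+L^W_t$ identifies the limit.

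For $k\ge 3$, the process $X$ is not Markov. Here the Krylov--Bogoliubov argument replaces the exact Green function: the time-averaged empirical measure of the local configuration seen by the pair, during pair collisions, converges to the counting measure, because other particles are far away except on negligible triple-collision times. I expect this to be the main obstacle, namely bounding the triple-collision contribution, which again follows from \eqref{eq:additive-functional-moment} applied to two pairs at once. Uniqueness of the limit then gives convergence of the full sequence.
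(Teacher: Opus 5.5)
\emph{Overall assessment.} Your tightness step matches the paper's. For this lemma the paper only points to Lemma~\ref{vbound} for $\cC$-tightness, and to a Krylov--Bogoliubov argument (with the counting measure as invariant measure) for identifying the limit. For $k=2$ your identification is a valid and more explicit alternative. The bounded corrector of Lemma~\ref{lem:green-gap-beta} makes mean-zero functionals $O_{\mathbf P}(N^{1/4})$. For the Tanaka step, $\cL^{\rm gap}_\beta|\cdot|=\cosh\beta\,(\car_{\{0\}}+\tfrac12\car_{\{1\}}+\tfrac12\car_{\{-1\}})$ has total mass $2\cosh\beta$, while the Tanaka local time of the variance-$2t$ limit equals $2\int_0^t\delta_0(W_s)\,\dd s$. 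So Tanaka gives exactly $\gamma^2(h)L^W$ with $L^W$ the occupation density. That is the normalization the paper uses (it is what matches $\gamma^2=\tfrac12$ in Proposition~\ref{pr:second-moment}), so your normalization worry resolves in favour of the statement as written.

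\emph{The gap for $k\ge3$.} There is a genuine gap exactly at the step you call the main obstacle, because two claims you rely on are false.
\begin{enumerate}
\item[(i)] Away from $\{|R^i-R^j|\le1\}$ the pair does not move as two independent tilted walks, and other particles do not matter only at triple collisions. If $R^i=R^\ell$ and $R^j$ is far away, $R^i$ jumps right at rate $\tfrac14(e^{\beta}+e^{2\beta})\neq\tfrac12e^\beta$. So $R^i-R^j$ acquires a drift of about $\beta/2$ during pairwise $(i,\ell)$ collisions, and your symmetry argument for the drift does not cover this.
\item[(ii)] The estimate \eqref{eq:additive-functional-moment} cannot make triple collisions negligible. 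Applied to one pair, or to two pairs via H\"older, it bounds $\int_0^{tN}\car_{\{|R^i-R^j|\le r,\,|R^i-R^\ell|\le r\}}\,\dd s$ only by $O(N^{1/2})$. This is the same order as the pair local time, hence $O(1)$ rather than $o(1)$ after the $N^{-1/2}$ normalization.
\end{enumerate}
So your Krylov--Bogoliubov step for $k\ge3$ is not closed by the reason you give.

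\emph{The missing observation.} At $\beta=0$ each particle decides independently whether to cross a ringing bond. Hence, given the full configuration, $(R^i,R^j)$ jumps with exactly the two-point rates. With the tilt, the rate for $(R^i,R^j)$ to jump by $(a,b)\neq(0,0)$ is $e^{\beta(a+b)}\sum_{\y}e^{\beta c(\y)}\cL^{(k)}(\x,\y)$. Here the sum runs over $\y$ with $(y_i-x_i,y_j-x_j)=(a,b)$, and $c(\y)$ is the net displacement of the other particles. This differs from the tilted two-point rate by $O(\beta)$, and only when $x_i$ or $x_j$ is within distance one of a third coordinate. Consequently the following three quantities are each $O(\beta_N)$ times indicators of pair-collision sets:
\begin{itemize}
\item $\cL^{(\beta_N,k)}[G_{\beta_N}h(x_i-x_j)]-h(x_i-x_j)$;
\item $\cL^{(\beta_N,k)}|x_i-x_j|-\cL^{\rm gap}_{\beta_N}|\cdot|(x_i-x_j)$;
\item the drift of $R^i-R^j$.
\end{itemize}
Lemma~\ref{vbound} then makes their integrals over $[0,tN]$, multiplied by $N^{-1/2}$, of order $O_{\mathbf P}(\beta_N)\to0$. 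Your $k=2$ corrector-plus-Tanaka argument therefore transfers verbatim to every pair, with no triple-collision estimate at this scale. Such an estimate is only needed at the Dobrushin scale $N^{-1/4}$, where the paper uses Corollary~\ref{cor:HKUB-collisions}.

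\emph{Alternative fix.} If you prefer to keep the Krylov--Bogoliubov route, triple collisions can be removed softly. Any subsequential limit of $N^{-1/2}\int_0^{tN}\car_{\{|R^i-R^j|\le r,\,|R^i-R^\ell|\le r\}}\,\dd s$ is continuous and nondecreasing, with increments dominated by those of the $(i,j)$ and $(i,\ell)$ limits. So it can only grow when $W^i=W^j=W^\ell$. This a.s.\ never happens at positive times, because $(W^i-W^j,W^i-W^\ell)$ is a nondegenerate planar Brownian motion.
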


\subsection{Dobrushin-type theorem for $k=2$}
We now have all the ingredients for stating and proving the main result of this section.

\begin{theorem}[Dobrushin-type invariance principle: $k=2$]\label{th:Dobrushin-k=2}
Let $f, h:\Z\to \R$ be two finite-support functions, with $f$ additionally satisfying $\sum_{x\in \Z}f(x)\,\pi(x) = 0$. 
   Then, for any sequences $\beta=\beta_N\to 0$ and $N^{-1/2}\mathbf x^N \in N^{-1/2}\Z^2\to \mathbf z\in \R^2$ as $N\to \infty$,  the following triple (all coordinates being expressed in terms of the two-point gap process $X=R^1-R^2$)
   \begin{equation}
\tonde{N^{-1/2}X_{tN},N^{-1/2}\textstyle{\int_0^{tN}}h(X_s)\,\dd s, N^{-1/4}\int_0^{tN}f(X_s)\,\dd s}_{t\ge 0}\comma\quad\text{under}\ \mathbf   P_{\x^N}^{(\beta_N,2)}\comma
   \end{equation} weakly converges in $\cD([0,\infty);\R^3)$ as $N\to \infty$ to 
   \begin{equation}
    \tonde{W_t,\gamma^2(h) L_t^W, \sigma(f) B_{L_t^W}}_{t\ge 0}\comma
   \end{equation}
   where the first two coordinates are given as in \eqref{eq:W+local-time}, 
   $B$ is another Brownian motion independent from $W$ and such that $B_0=0$, and \begin{equation}\label{eq:sigma-squared}
    \sigma^2(f)\eqdef \textstyle{\sum_{x\in \Z}}\,\Gamma(Gf,Gf)(x)\fstop
   \end{equation}
   Here, $G$ and $\Gamma$ are, respectively, the Green function and the carré-du-champ (Remark \ref{rem:green-function-untilted}) of the untilted gap process.
\end{theorem}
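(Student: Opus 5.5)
The plan is a martingale (Poisson-equation) decomposition of the mean-zero functional. By Lemma~\ref{lem:green-gap-beta}, $u_N\eqdef G_{\beta_N}f$ is bounded, solves $\cL_{\beta_N}^{\rm gap}u_N=f$, and is constant on $\Z_{>0}\setminus{\rm supp}(f)$ and on $\Z_{<0}\setminus{\rm supp}(f)$. Dynkin's formula for the (Markov, by Proposition~\ref{pr:gapprocess-tilt}) tilted gap process gives
\begin{equation}
	\int_0^{tN} f(X_s)\,\dd s = u_N(X_{tN})-u_N(X_0) - M^N_{tN}\comma
\end{equation}
where $M^N$ is a martingale with predictable quadratic variation $\langle M^N\rangle_{T}=\int_0^{T}\Gamma_{\beta_N}(u_N,u_N)(X_s)\,\dd s$. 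Since $u_N$ is uniformly bounded (Remark~\ref{rem:green-function-untilted}), the boundary term is $O(1)$ and disappears after multiplying by $N^{-1/4}$. It therefore suffices to prove joint convergence of $N^{-1/2}X_{tN}$, $N^{-1/2}\int_0^{tN}h(X_s)\,\dd s$, and $N^{-1/4}M^N_{tN}$.

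The key observation is that $u_N$ is constant on each side away from a finite window. Hence $\Gamma_{\beta_N}(u_N,u_N)$ is supported in $[-\varrho_f-1,\varrho_f+1]$, uniformly in $N$, and by \eqref{eq:carre-du-champ-conv} it differs from $\Gamma(Gf,Gf)$ by $O(\beta_N^2)$ uniformly on that window. Apply Lemma~\ref{vprocess} (with $k=2$, where the pair gap is exactly $X$) jointly to the finite-support functions $h$ and $g\eqdef\Gamma(Gf,Gf)$. This is possible because Lemma~\ref{vprocess} is linear in $h$, so joint convergence follows by Cramér--Wold applied to $h+c\,g$. It yields
\begin{equation}
	\Big(N^{-1/2}X_{tN},\,N^{-1/2}\textstyle\int_0^{tN}h(X_s)\dd s,\,N^{-1/2}\langle M^N\rangle_{tN}\Big)\Longrightarrow \big(W_t,\gamma^2(h)L^W_t,\sigma^2(f)L^W_t\big)\comma
\end{equation}
with $\sigma^2(f)=\sum_x\Gamma(Gf,Gf)(x)$ as in \eqref{eq:sigma-squared}. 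The error coming from $\beta_N^2$ is controlled by Lemma~\ref{vbound}.

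Next come the jumps and the martingale CLT. The jumps of $N^{-1/4}M^N$ are bounded by $2N^{-1/4}\sup|u_N|\to0$. The rescaled martingale $\widetilde M^N_t\eqdef N^{-1/4}M^N_{tN}$ has bracket converging to $\sigma^2(f)L^W_t$, and the main point is to identify the joint limit as $B_{L^W_t}$ with $B$ independent of $W$. Here the plan is to compute cross-variations with the martingale part of $X$. Write $X_t=X_0+\mathcal N_t$, where $\mathcal N$ is a martingale because the gap process is symmetric, so it has no drift. Then
\begin{equation}
	\langle \mathcal N, M^N\rangle_T=\int_0^T \Gamma_{\beta_N}(\mathrm{id},u_N)(X_s)\,\dd s\comma
\end{equation}
again a finite-support functional. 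After rescaling by $N^{-1/2}\cdot N^{-1/4}$, Lemma~\ref{vbound} shows this cross-variation is $O(N^{-1/4})\to0$.

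Tightness of the triple in $\cD$ follows from tightness of each coordinate. For the martingale coordinate, use Aldous' criterion together with the bracket bound of Lemma~\ref{vbound}. For any subsequential limit $(W,\ell,\mathcal M)$ with $\ell=\gamma^2(h)L^W$, the limit $(W,\mathcal M)$ is a continuous martingale pair with $\langle W\rangle_t=t$, $\langle\mathcal M\rangle_t=\sigma^2(f)L^W_t$ and $\langle W,\mathcal M\rangle=0$. The passage to the limit of the martingale property and of the brackets is justified by uniform integrability from \eqref{eq:additive-functional-moment}, and uses continuity of the limiting brackets. Knight's theorem, or the Dambis--Dubins--Schwarz time change for orthogonal martingales, then gives $\mathcal M_t=\sigma(f)B_{L^W_t}$ with $B$ independent of $W$. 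If $\sigma(f)=0$ the claim is trivial. This identifies the law uniquely, and the convergence follows.

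I expect the main obstacle to be this last identification. Orthogonal brackets do not immediately give independence when one bracket is a functional ($L^W$) of the other martingale. One has to check that the DDS Brownian motion of $\mathcal M$ is independent of $W$. The cleanest route is to verify that, for $\theta\in\R$,
\begin{equation}
	\exp\big(i\theta\mathcal M_t+\tfrac{\theta^2}{2}\sigma^2(f)L^W_t\big)
\end{equation}
is a martingale conditionally on $W$. Equivalently, show that the joint generator-type martingale problem for $(W,L^W,\mathcal M)$ has a unique solution. I would test this against prelimit exponential martingales, using $\langle\mathcal N,M^N\rangle\to0$ and the fact that the $f$-functional is the only source of the fluctuation.
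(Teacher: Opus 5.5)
Your proposal is correct and follows the paper's skeleton: the Poisson equation with $u_N=G_{\beta_N}f$, the bracket $N^{-1/2}\int_0^{tN}\Gamma_{\beta_N}(u_N,u_N)(X_s)\,\dd s\to\sigma^2(f)L^W$ via Lemma~\ref{vprocess} and \eqref{eq:carre-du-champ-conv}, the $O(N^{-1/4})$ cross-variation via Lemma~\ref{vbound}, and Aldous' criterion with vanishing jumps. The genuine difference is the identification of the limit.

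\emph{The paper's identification.} It conditions on $X^\infty$. By martingale representation it writes $F(X^\infty)=\mathbf E[F(X^\infty)]+\int_0^T h_s\,\dd X^\infty_s$, and pairs this with the stochastic exponential of $aM^\infty$. Using $\langle M^\infty,X^\infty\rangle=0$, it obtains $\mathbf E[e^{aM^\infty_T}\mid X^\infty]=\exp\{\frac{a^2}{2}\sigma^2(f)L^{X^\infty}_T\}$. The analogous statement for increments is left to the reader.

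\emph{Your route.} Going through Knight's theorem is valid and actually shorter, and the obstacle you anticipate does not exist. Knight's theorem says that the Dambis--Dubins--Schwarz Brownian motions of mutually orthogonal continuous local martingales with infinite total brackets are independent. It places no restriction on how one bracket depends on the other martingales; the classical instance is the pair $\int_0^\cdot\ind_{\{W_s>0\}}\,\dd W_s$ and $\int_0^\cdot\ind_{\{W_s<0\}}\,\dd W_s$. Apply it to $W-W_0$ and $\mathcal M$, whose bracket $\sigma^2(f)L^W_t\to\infty$ a.s.\ when $\sigma(f)>0$. This gives $\mathcal M_t=b_{\sigma^2(f)L^W_t}$ with $b$ a Brownian motion independent of $W$. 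That pins down the law of $(W,\mathcal M)$ at the process level in one step. The conditional exponential-martingale fallback you describe, which is essentially the paper's argument, is therefore not needed.

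Two small corrections.

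First, the tilted gap process is reversible but not driftless. From Proposition~\ref{pr:gapprocess-tilt}, $\cL^{\rm gap}_\beta\,{\rm id}=\frac12(\cosh\beta-1)(\ind_{\{1\}}-\ind_{\{-1\}})$. So $\mathcal N$ must be taken as the Dynkin martingale of $X$. Its compensator is $O(\beta_N^2)$ times a finite-support occupation time, hence negligible after the $N^{-1/2}$ scaling by Lemma~\ref{vbound}. The paper glosses over the same point when it calls $(N^{-1/2}X,N^{-1/4}M^N)$ a martingale.

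Second, tightness of each coordinate in $\cD([0,\infty);\R)$ does not by itself give tightness of the triple in $\cD([0,\infty);\R^3)$. You need $\cC$-tightness of the coordinates, which you do have, since all jumps vanish.
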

\begin{proof}
    For each $N\in \N$, let $g_N:=G_{\beta_N}f:\Z\to \R$. By Remark \ref{rem:green-function-untilted}, the functions $g_N$ are uniformly bounded and uniformly converge, as $N\to \infty$, to $g\eqdef Gf$. 
        
In this proof, we simply write $\mathbf P^N= \mathbf P_{\x^N}^{(\beta_N,2)}$, and $\mathbf E^N$ for the corresponding expectation.
    Since $f$ has mean zero, Lemma \ref{lem:green-gap-beta} and Dynkin's formula ensure that
    \begin{equation}M_t^N:= g_N(X_t)- \textstyle{\int_0^t}\, f(X_s)\,\dd s\comma\qquad t \ge 0\comma
    \end{equation}
    is a martingale in the natural filtration of the gap process. Since $g_N$ is uniformly bounded in $N$, $N^{-1/4} (M^N_{tN} + \int_0^{tN} f(X_s)\, \dd s )_{t\ge 0}$
    vanishes in $\cD([0,\infty);\R)$ as $N\to \infty$. Thus, our main task reduces to analyze the joint limit of $(N^{-1/2} X_{tN}, N^{-1/4} M^N_{tN})_{t\ge 0}$. The advantage here is that this pair is an $\R^2$-valued martingale in the joint filtration, a property which turns out to be useful for calculations. 

    The predictable quadratic variation of the second marginal equals 
    \begin{equation}\label{e:fil1}\langle N^{-1/4}M^N \rangle_{tN}
    = N^{-1/2}\textstyle{\int_0^{tN}}\,\Gamma_N(g_N,g_N)(X_s)\,\dd s\comma\qquad t \ge 0\comma\end{equation}
    where $\Gamma_N=\Gamma_{\beta_N}$ denotes the carré-du-champ associated to $\cL_{\beta_N}^{\rm gap}$, see \eqref{eq:carre-du-champ-def}. Since $g_N$ is eventually constant (Lemma \ref{lem:green-gap-beta}) and $\cL_{\beta_N}^{\rm gap}$ has finite-range jumps, $\Gamma_N(g_N,g_N)$ is supported on some finite set not depending on $N$. Moreover, by \eqref{eq:carre-du-champ-conv} and Lemma \ref{vprocess}, 
    \begin{equation}\label{eq:quad-var-approx}
    \sup_{t\le T}|\langle N^{-1/4}M^N\rangle_{tN}- N^{-1/2}\textstyle{\int_0^{tN}}\,\Gamma(g,g)(X_s)\,\dd s |\xLongrightarrow[N\to \infty]{}0\comma\qquad T>0\fstop
    \end{equation}

    Hence, for any $T, \delta>0$ and two-point motion stopping time $\tau=\tau^N\in [0,TN]$, 
    \begin{align}
    \mathbf E^N[(N^{-1/4}M_{\tau+\delta N}^N-N^{-1/4}M_\tau^N)^2] &= N^{-1/2}\mathbf E^N[\langle M^N\rangle_{\tau+\delta N}-\langle M^N\rangle_\tau]\\
    &=N^{-1/2}\mathbf E^N[\textstyle{\int_\tau^{\tau+\delta N} \Gamma_N(g_N,g_N)(X_s)\,\dd s}] \\
    &\le C \delta^{1/2}\comma
    \end{align}
    where for the last step we used the strong Markov property of the gap process, \eqref{eq:carre-du-champ-conv}, and the bound in \eqref{eq:additive-functional-moment} with $p=1$. By Aldous' stopping time criterion (see, e.g., \cite[Theorem 1]{aldous_stopping_1978}), taking $\delta\to 0$ proves that $((N^{-1/4}M_{tN}^N)_{t\ge 0})_N$ is a tight sequence in $\cD([0,\infty);\R)$. Further, by, e.g., \cite[Theorem 13.4]{billingsley_convergence_1999}), any limit point lies in $\cC([0,\infty);\R)$ as, almost surely,
    \begin{equation}
    \sup_{t>0}|N^{-1/4}M_t^N-N^{-1/4}M_{t^-}^N|\le 2N^{-1/4}\sup_{x\in \Z} |g_N(x)|\xrightarrow[N\to \infty]{}0\fstop
    \end{equation}

By Lemma \ref{vprocess}, the first marginal
    $(N^{-1/2}X_{tN})_{t\ge 0}$ converges to the Brownian motion $W$. To identify the joint limit with $N^{-1/4} M_{Nt}$,  we first calculate the predictable quadratic covariance:
    \begin{equation}\label{cov1}\mathbf E^N[|\langle N^{-1/4}M^N, N^{-1/2}X \rangle_{tN}|] = N^{-3/4} \mathbf E^N[ | \textstyle{\int_0^{tN}}\,\Gamma_N(g_N,{\rm id}_\Z)(X_s)\,\dd s|] = O(N^{-1/4})\comma
    \end{equation}where ${\rm id}_\Z$ is the identity function on $\Z$, while the last estimate follows from Lemma \ref{vbound} as $\Gamma_N(g_N,\mathrm{id}_\Z)$ is finitely supported and uniformly bounded in $N$.

    Now consider any joint limit $(X^\infty, M^\infty)$ of the pair $(N^{-1/2} X_{tN}, N^{-1/4} M^N_{tN})_{t\ge 0}$. By the above discussion and uniform integrability of the pre-limit, this limit process is a continuous $\R^2$-valued martingale. Let $\mathbf P$ and $\mathbf E$ indicate law and expectation. As already mentioned, $X^\infty$ is marginally distributed as $W$. We now show that, conditionally on $X^\infty$, the second marginal $M^\infty$ is an independent Gaussian process with  the correct covariance structure. 
    
    We prove this by first showing that 
    \begin{equation}\label{e:fil2}\mathbf E [ e^{a M_t^\infty} |X^\infty] = \exp\set{\tfrac12 a^2 \sigma^2(f) L_t^{X^{\infty}}}\comma\qquad a,  t \ge 0\fstop
    \end{equation}
    This means that, conditionally on $X^\infty$, the random variable $M_t^\infty$ is a Gaussian random variable of variance $\sigma^2(f)L_t^{X^\infty}$. To prove it,  we use an argument from \cite{das_drillick_parekh_kpz_2024}, based on It\^o's formula. Fix $T>0$, and take any bounded $\mathcal F_T(X^\infty)$-measurable functional $F:\cC([0,T],\R)\to \R$.  Note that $t\in [0,T]\mapsto \mathbf E[F(X^\infty)|\mathcal F_t(X^\infty)]$ is a martingale in the filtration of $X^\infty$. As $X^\infty$ is a standard Brownian motion, the martingale representation theorem yields
		\begin{equation}
		F(X^\infty)=\mathbf E [F(X^\infty)]+\int_0^T h_s\, \dd X_s^\infty\comma
		\end{equation}
		for some $\R$-valued adapted process $h$. For the stochastic exponential, we have 
\begin{equation}\label{exp_rep}\exp\left(a M_T^\infty - \frac{a^2}2 \langle M^\infty\rangle_T\right) = 1+a \int_0^T \exp\bigg(a M_s^\infty - \frac{a^2}2 \langle M^\infty\rangle_s\bigg) \dd M_s^\infty\fstop
        \end{equation}
        Using this, we find that 
		\begin{align} &\mathbf E\bigg[\bigg(\exp\set{a M_T^\infty - \frac{a^2}2 \langle M^\infty\rangle_T} -1\bigg)\big(F(X^\infty)-\mE[F(X^\infty)]\big)\bigg] \\
        &= a \mE\bigg[\bigg(\int_0^T \exp\bigg(a M_s^\infty - \frac{a^2}2 \langle M^\infty\rangle_s\bigg) \dd M_s^\infty\bigg)\bigg(\int_0^T h_s\, \dd X_s^\infty\bigg)\bigg]
        \\&=  a \mE\bigg[\int_0^T h_s\, \exp\set{a M_s^\infty - \frac{a^2}2 \langle M^\infty\rangle_s} \dd \langle M^\infty,X^\infty\rangle_s\bigg] = 0\comma
		\end{align}
		where the last equality is due to the fact that $\langle X^\infty,M^\infty\rangle \equiv 0$ almost surely by \eqref{cov1}. On the one hand, this proves  \begin{equation}\mE[\exp\ttset{M_T^\infty - \tfrac12 \langle M^\infty\rangle_T}F(X^\infty)]=\mE[F(X^\infty)]\end{equation} 
        for all bounded and measurable $F$. On the other hand, \eqref{e:fil1}--\eqref{eq:quad-var-approx} and Lemma \ref{vprocess} imply that $\langle M^\infty\rangle = \sigma^2(f) L^{X_\infty}$, again using that martingality must be preserved by limit points. This establishes \eqref{e:fil2}.

        To finish the proof, we claim more generally than \eqref{e:fil2} that, for disjoint intervals $(t_{j-1},t_j)\subset [0,\infty)$  and $a_j\in \R$, one has $\mathbf E[\exp\ttset{ \sum_{j=1}^n a_j (M_{t_j}^\infty - M_{t_{j-1}}^\infty)}| X^\infty] = \exp\ttset{\frac12 \sigma^2(f) \sum_{j=1}^n a_j^2 (L_{t_j}^{X^\infty} - L_{t_{j-1}}^{X^\infty}) }$. This gives the claim as it implies that, conditionally on $X^\infty$, the increments $M_{t_j}^\infty - M_{t_{j-1}}^\infty$ are independent Gaussians of the correct variance. The proof is similar to \eqref{e:fil2} and is omitted.
       
                Finally, the convergence of the entire triple follows from Lemma \ref{vprocess}. Indeed, the second coordinate
        $ N^{-1/2}\int_0^{tN} h(X_s)\,\dd s
        $ is measurable with respect to the path $(N^{-1/2}X_{tN})_{t\ge 0}$, and Lemma \ref{vprocess} identifies its joint limit with this first coordinate as $\gamma^2(h)L^W$. Hence, every subsequential limit of the three-dimensional process has first two coordinates $(W,\gamma^2(h)L^W),$ and joint first and third coordinates $(W,\sigma(f)B_{L^W})$, with the second coordinate determined by the first one (as a measurable function of it). This uniquely identifies the joint limit, 
        completing the proof.
\end{proof}

Theorem \ref{th:Dobrushin-k=2} and Lemma \ref{lem:two-point-discrepancy} provide all the ingredients to compute asymptotics of second moments of the fields $\cY_{\lambda,N}$ in \eqref{eq:tilted-field} (and, more generally, of $\mathfrak H_{\beta_{\lambda,N},N}$ in \eqref{eq:H-beta-N}) with $\lambda = 7/8$, and evaluated at constant test functions. Its full version is stated as Proposition \ref{lim_moments} below. In what follows, $\mathbf P_{\mathbf z}^{{\rm BM}^{\otimes k}}$ and $\mathbf E_{\mathbf z}^{{\rm BM}^{\otimes k}}$ denote the path law and corresponding expectation of a $k$-dimensional isotropic Brownian motion $\mathbf W=(W^1,\ldots,W^k)$ started at $\mathbf z\in \R^k$.

\begin{proposition}[Moments asymptotics: $k=2$]\label{pr:second-moment}
   Fix $\lambda =7/8$, and recall from \eqref{eq:beta-lambda-N} that $\beta_{\lambda,N}\sim N^{-1/8}$. Then, for every $t\ge 0$ and sequence $\x^N\in \Z^2$ such that $N^{-1/2}\x^N\to \mathbf z\in \R^2$, 
   \begin{equation}\label{eq:2nd-moment-asymp}\E\bigg[ \prod_{j=1,2}\mathfrak H_{\beta_{\lambda,N},N}(t,x_j^N,1)\bigg] \xrightarrow[N\to \infty]{} \mathbf E_{(z_1,z_2)}^{{\rm BM    }^{\otimes 2}}\bigg[ \exp\set{{\frac12}  L_t^{W^1-W^2} } \bigg]\comma 
   \end{equation}
   where $1\in \cC_b(\R)$ in $\mathfrak H_{\beta_{\lambda,N},N}(t,x_j^N,1)$ indicates the constant function equal to one.
\end{proposition}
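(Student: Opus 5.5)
The plan is to start from the moment representation \eqref{eq:moment-representation-beta} with $k=2$, $\phi_1=\phi_2=1$, which gives
\begin{equation}
\E\bigg[\prod_{j=1,2}\mathfrak H_{\beta_{\lambda,N},N}(t,x_j^N,1)\bigg]=\mathbf E^{(\beta_N,2)}_{\x^N}\big[\mathscr E_{\beta_N,2}(tN)\big]\comma\qquad \beta_N\eqdef\beta_{\lambda,N}\fstop
\end{equation}
By Lemma \ref{lem:two-point-discrepancy}, the exponent is a functional of the gap $X=R^1-R^2$ alone:
\begin{equation}
\log\mathscr E_{\beta_N,2}(tN)=(\mathfrak c_{\beta_N}-1)\int_0^{tN}\Phi_1(X_s)\,\dd s+(\mathfrak c_{\beta_N}-1)^2\int_0^{tN}\Phi_2(X_s)\,\dd s\fstop
\end{equation}
Since $\sinh\beta_N=N^{-1/8}$, we have $\mathfrak c_{\beta_N}-1=\tfrac12N^{-1/4}(1+o(1))$ and $(\mathfrak c_{\beta_N}-1)^2=\tfrac14N^{-1/2}(1+o(1))$. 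The exponent is therefore $\tfrac12(1+o(1))$ times $N^{-1/4}\int_0^{tN}\Phi_1(X_s)\,\dd s$, plus $\tfrac14(1+o(1))$ times $N^{-1/2}\int_0^{tN}\Phi_2(X_s)\,\dd s$.

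Next, apply Theorem \ref{th:Dobrushin-k=2} with $f=\Phi_1$, which has mean zero by \eqref{eq:mean-zero-mean-one}, and $h=\Phi_2$, with $\gamma^2(\Phi_2)=1$. The pair above then converges jointly, at fixed time $t$, to $(\sigma(\Phi_1)B_{L_t^W},\,L_t^W)$, where $W$ is a Brownian motion from $z_1-z_2$ and $B$ is an independent Brownian motion. By the continuous mapping theorem, $\mathscr E_{\beta_N,2}(tN)$ converges in law to
\begin{equation}
\exp\big\{\tfrac12\sigma(\Phi_1)B_{L_t^W}+\tfrac14L_t^W\big\}\fstop
\end{equation}
Conditioning on $W$ and using the independence of $B$, its expectation is $\E[\exp\{(\tfrac18\sigma^2(\Phi_1)+\tfrac14)L_t^W\}]$. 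The gap $W^1-W^2$ of two independent standard Brownian motions is a Brownian motion with diffusivity $2$. This is consistent with the gap process jumping at rate $\cosh\beta\to1$ in each direction, and it fixes how $L^W$ is normalized relative to $L^{W^1-W^2}$. Matching with the right-hand side of \eqref{eq:2nd-moment-asymp} therefore reduces to the explicit identity $\tfrac18\sigma^2(\Phi_1)+\tfrac14=\tfrac12$ after accounting for that normalization. Here $\sigma^2(\Phi_1)$ is computed from \eqref{eq:sigma-squared}.

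That computation is concrete. Using $G$ from Remark \ref{rem:green-function-untilted}, one gets $g=G\Phi_1$ with
\begin{equation}
g(0)=-\tfrac12,\qquad g(\pm1)=0,\qquad g(x)=0\ \text{for }|x|\ge2\fstop
\end{equation}
Then $\sum_x\Gamma(g,g)(x)$ is summed over jumps: the moves $0\to\pm1$ at rate $1/2$ each, counted from both endpoints by symmetry of $\pi$. I would check this sum carefully, together with the local-time normalization, against the target value $1/2$; this is the numerically delicate step.

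The main obstacle is upgrading convergence in law to convergence of expectations, which needs uniform integrability of $\mathscr E_{\beta_N,2}(tN)$. I would bound $\mathbf E^{(\beta_N,2)}_{\x^N}[\mathscr E_{\beta_N,2}(tN)^{1+\delta}]$ uniformly in $N$. The $\Phi_2$ part is handled by \eqref{eq:additive-functional-exp}. The $\Phi_1$ part cannot be bounded through $|\Phi_1|$, since that only gives $N^{-1/4}\cdot O(N^{1/2})$. Instead, I would use the martingale decomposition from the proof of Theorem \ref{th:Dobrushin-k=2}:
\begin{equation}
\int_0^{tN}\Phi_1(X_s)\,\dd s=g_N(X_{tN})-g_N(X_0)-M^N_{tN}\comma
\end{equation}
with $g_N$ uniformly bounded. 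Writing $\exp\{aN^{-1/4}M^N\}$ as the exponential martingale times $\exp\{c\,N^{-1/2}\langle M^N\rangle\}$ (the jumps of $M^N$ are $O(1)$, so $N^{-1/4}$ times them is small and the exponential-martingale correction is comparable to $\langle M^N\rangle$), and applying Cauchy--Schwarz, reduces everything to exponential moments of $N^{-1/2}\int_0^{tN}\Gamma_N(g_N,g_N)(X_s)\,\dd s$. These are again controlled by \eqref{eq:additive-functional-exp}, since $\Gamma_N(g_N,g_N)$ has finite support. This gives uniform integrability and hence \eqref{eq:2nd-moment-asymp}.
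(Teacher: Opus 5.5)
Your overall route matches the paper's: the moment representation, the two-point discrepancy $(\mathfrak c_{\beta_N}-1)\Phi_1+(\mathfrak c_{\beta_N}-1)^2\Phi_2$, Theorem~\ref{th:Dobrushin-k=2} with $f=\Phi_1$ and $h=\Phi_2$, and the conditional Gaussian computation giving the coefficient $\tfrac18\sigma^2(\Phi_1)+\tfrac14$. Your uniform-integrability step is more careful than the paper's one-line appeal to Lemma~\ref{vbound}, and it is correct. As you note, that lemma cannot control $N^{-1/4}\int_0^{tN}\Phi_1(X_s)\,\dd s$ through $|\Phi_1|$. Your reduction, via $g_N$, the exponential martingale and Cauchy--Schwarz, to exponential moments of $N^{-1/2}\int_0^{tN}\Gamma_N(g_N,g_N)(X_s)\,\dd s$ closes that gap.

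The problem is the one step that actually produces the constant $\tfrac12$, and there your computation is wrong.

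\textbf{The Poisson solution.} Your $g=-\tfrac12\car_{\{0\}}$ gives $\cL^{\rm gap}g(0)=\tfrac12\neq1=\Phi_1(0)$, so it does not solve $\cL^{\rm gap}g=\Phi_1$. You appear to have kept only the $|x-y|/2$ part of $G$. The term $(1-\car_{\{x=y=0\}})/2$ contributes an extra $-\tfrac12\car_{\{0\}}$ after summing against the mean-zero $\Phi_1$. The correct solution is $g=G\Phi_1=-\car_{\{0\}}$, which is immediate from $\Phi_1=-\cL^{\rm gap}\car_{\{0\}}$.

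\textbf{The variance.} Summing $\Gamma(g,g)$ over the four ordered jumps $0\leftrightarrow\pm1$, each at rate $\tfrac12$, gives $\sigma^2(\Phi_1)=4\cdot\tfrac12\cdot g(0)^2=2$. Your $g$ would give $\sigma^2(\Phi_1)=\tfrac12$, and hence a total coefficient $\tfrac1{16}+\tfrac14=\tfrac5{16}$, which is the wrong answer.

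\textbf{The normalization.} No further ``local-time normalization'' is needed. The $W$ in Theorem~\ref{th:Dobrushin-k=2} is the limit of $N^{-1/2}X_{tN}$, i.e.\ it is already $W^1-W^2$ with diffusivity $2$. Both $\gamma^2(h)=\sum_x h(x)$ and $\sigma^2(f)$ are calibrated against the same local time $L^{W^1-W^2}$ that appears on the right-hand side of \eqref{eq:2nd-moment-asymp}. Inserting an additional rescaling would spoil the constant.

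With $\sigma^2(\Phi_1)=2$, your formula gives $\tfrac18\cdot2+\tfrac14=\tfrac12$, as required.
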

\begin{proof}
	Set $\beta_N\eqdef \beta_{\lambda,N}$, with $\lambda=7/8$. By Proposition \ref{cor:moments} and Lemma
	\ref{lem:two-point-discrepancy}, the left-hand side of \eqref{eq:2nd-moment-asymp} equals
	\begin{equation}\label{eq:step-1}
		\mathbf E_{\mathbf x^N}^{(\beta_N,2)}
		\bigg[
			\exp\set{
				\int_0^{tN}
				\tonde{
					\tonde{\mathfrak c_{\beta_N}-1}\Phi_1(X_s)
					+
					\tonde{\mathfrak c_{\beta_N}-1}^2\Phi_2(X_s)
				}
				\dd s
			}
		\bigg]\comma
	\end{equation}
	where $X=R^1-R^2$ is the two-point gap process, while
	$\mathfrak c_\beta=\cosh(\beta)$.
	Since $\beta_N\sim N^{-1/8}$,
	\begin{equation}
		\mathfrak c_{\beta_N}-1
		=
		\frac{N^{-1/4}}2+o(N^{-1/4})\comma
		\qquad
		(\mathfrak c_{\beta_N}-1)^2
		=
		\frac{N^{-1/2}}4+o(N^{-1/2})\fstop
	\end{equation}
	Thus, the exponent is
	\begin{equation}
		\frac{N^{-1/4}}2\int_0^{tN} \Phi_1(X_s)\,\dd s
		+
		\frac{N^{-1/2}}4
		\int_0^{tN}\Phi_2(X_s)\,\dd s
		+
		o_{\P}(1)\comma
	\end{equation}
	with
	\begin{equation}
		\Phi_1\eqdef
		\ind_{\{0\}}
		-
		\frac12\ind_{\{1\}}
		-
		\frac12\ind_{\{-1\}} = - \cL^{\rm gap}\car_{\{0\}}\fstop
	\end{equation}

The constant $\gamma^2(\Phi_2)=\sum_{x\in \Z}\Phi_2(x)=1$ has already been computed in \eqref{eq:mean-zero-mean-one}.
	Since $\gamma^2(\Phi_1)=0$, let us compute $\sigma^2(\Phi_1)$ given in \eqref{eq:sigma-squared}. By Remark \ref{rem:green-function-untilted},
	$G\Phi_1(x)
		=
		-\ind_{\{x=0\}}$.
	Hence, recalling \eqref{eq:sigma-squared} and Proposition \ref{gapprocess},
	\begin{equation}\label{eq:sigma-computation}
	\sigma^2(\Phi_1) 
		=\Gamma(G\Phi_1,G \Phi_1)(0)+\Gamma(G\Phi_1,G \Phi_1)(1)+\Gamma(G\Phi_1,G \Phi_1)(-1)=1+\frac12
    +\frac12=
		2\fstop
	\end{equation}

	By Theorem \ref{th:Dobrushin-k=2}, jointly with the diffusive convergence of
	the tilted two-point motion,
	\begin{equation}
		\int_0^{tN}
		\tonde{
			(\mathfrak c_{\beta_N}-1)\Phi_1(X_s)
			+
			(\mathfrak c_{\beta_N}-1)^2\Phi_2(X_s)
		}
		\,\dd s
		\xLongrightarrow[N\to\infty]{}
		{\frac1{\sqrt 2}}B_{L_t^{W^1-W^2}}
		+
		\frac14L_t^{W^1-W^2}\comma
	\end{equation}
	where $W^1,W^2$ and $B$ are three independent standard Brownian
	motions. The exponential moments are uniformly
	integrable by Lemma \ref{vbound}. Therefore, the limit as $N\to \infty$ of the expression in \eqref{eq:step-1} reads as
	\begin{equation}
		\mathbf E_{(z_1,z_2)}^{{\rm BM}^{\otimes 2}}
		\bigg[
			\mathbf E\sqbra{
				\exp\set{{\frac1{\sqrt 2}}
					B_{L_t^{W^1-W^2}}
				}
				\,\middle|\,
				W^1,W^2
			}
			\exp\set{
				\frac14L_t^{W^1-W^2}
			}
		\bigg]\fstop
	\end{equation}
	Since, for all $a\in \R$,
	\begin{equation}
		\mathbf E\sqbra{
			\exp\set{a B_{L_t^{W^1-W^2}}}
			\,\middle|\,
			W^1,W^2
		}
		=
		\exp\set{
			\frac{a^2}2L_t^{W^1-W^2}
		}\comma
	\end{equation}
	the total local-time coefficient is ${\frac12\frac12}+\frac14={\frac12}$. This proves
	the claim.
\end{proof}

\section{Higher moments' asymptotics and heat kernel estimates}\label{sec:higher-moments}
Next, we turn to the asymptotics of $k$-th moments of the tilted fields, for $k\ge 3$. As showed in Proposition \ref{sec:moment-repr}, this step involves the tilted $k$-point motion, thus, requiring a generalization of Dobrushin-type invariance principle in Theorem \ref{th:Dobrushin-k=2}. However, compared to the case of $k=2$, the analysis for $k\ge 3$ gets more complicated because of the possible occurrence of triple collisions, and by the fact that marginals of length $\ell$ of the tilted $k$-point motion do not perfectly decouple from the remaining $k-\ell$ coordinates.

\subsection{Heat kernel estimates and triple collisions}
Our main goal of the section is to control triple collisions using the following off-diagonal heat kernel upper bounds for the tilted $k$-point motion, which, as soon as $\beta\neq 0$, is nonreversible. In particular, we prove that the one-dimensional estimates in \cite[Theorem 4.7]{drillick_parekh_random_2025}, established only for the untilted (thus, reversible) case $\beta=0$, considerably extend to a wider range of tilt parameters $|\beta|\lesssim N^{-1/8}$. 
\begin{lemma}[Heat kernel upper bound: $\beta\neq 0$]\label{lem:HKUB} Fix $A>0$, $k\ge 1$, $\delta\in (0,\frac14)$, $0<t_0<T$,  and $\hslash>0$. Then, for some $C>0$ and all $\x, \y \in \Z^k$, $N\in \N$, $|\beta|\le AN^{-1/8}$, and $t\in [t_0,TN]$, 
\begin{equation}
\mathbf P_\x^{(\beta,k)}(|R_t^i-\sinh(\beta)t-y_i|\le 1,\,\forall\,1\le i\le k)\le C t^{-k/2+\delta}\ttonde{1+t^{-1/2}|\x-\y|}^{-\hslash} \fstop
\end{equation}
\end{lemma}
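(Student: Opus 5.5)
The plan is to reduce the tilted, non-reversible problem to an estimate for a symmetric Dirichlet form plus a small, localized non-symmetric perturbation, and then run a Nash--Davies argument in the moving frame. \textbf{Step 1 (moving frame).} Set $\widetilde{\mathbf R}_t\eqdef \mathbf R_t-\sinh(\beta)t\,\mathbf 1$. Away from the $1$-neighborhood $\mathcal N$ of the collision hyperplanes $\{x_i=x_j\}$, each coordinate is an independent walk with rates $e^{\pm\beta}/2$, whose drift is exactly removed by the shift. Hence the generator of $\widetilde{\mathbf R}$ splits as
\begin{equation}
\cL^{(\beta,k)}=\cS_\beta+\cA_\beta\comma
\end{equation}
with the following properties:
\begin{enumerate}[(i)]
\item $\cS_\beta$ is symmetric with respect to counting measure on $\Z^k$, and its conductances are uniformly comparable to those of $k$ independent rate-$\cosh\beta$ walks.
\item Off $\mathcal N$, the antisymmetric part $\cA_\beta$ consists only of the pure transport term, which is of size $O(\beta)$ and is exactly compensated by the frame shift.
\item On $\mathcal N$, $\cA_\beta$ carries an extra part with rates of size $O(|\beta|)$, supported on $\mathcal N$.
\end{enumerate}
The target estimate is exactly a bound on the transition density of $\widetilde{\mathbf R}$ from $\x$ to the unit box around $\y$.

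\textbf{Step 2 (on-diagonal bound).} The symmetric form of $\cS_\beta$ satisfies the Nash inequality $\|u\|_2^{2+4/k}\le C\,\mathcal E(u,u)\|u\|_1^{4/k}$ on $\Z^k$, with constants uniform in $|\beta|\le 1$. The key observation is that the antisymmetric part does not contribute to the energy: $\langle u,\cA_\beta u\rangle=0$ for the pure-jump antisymmetric kernel. So $\tfrac{\dd}{\dd t}\|P_tu\|_2^2=-2\mathcal E(P_tu,P_tu)$ exactly as in the reversible case. One caveat is that $\cA_\beta$ must be checked to be of divergence form (row and column sums balanced). Rates $e^{\beta\Sigma\Delta}$ do not give this exactly, and the correction is a potential term $V_\beta$ supported on $\mathcal N$ with $|V_\beta|=O(\beta^2)$. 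Running the same argument for the adjoint gives $\|P_t\|_{1\to2},\|P_t^*\|_{1\to 2}\le Ct^{-k/4}\exp\{\int_0^t\|V_\beta\|\}$. The $L^\infty$ bound on $V_\beta$ is too crude, because $\beta^2t\sim N^{1/2}$.

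\textbf{Step 3 (Davies weights and the main obstacle).} I would apply Davies' perturbation $u\mapsto e^{\psi}P_t(e^{-\psi}u)$ with $\psi(\z)=\alpha\cdot\z$, $|\alpha|\le 1$. This produces the usual factor $e^{C|\alpha|^2t-\alpha\cdot(\y-\x)}$, and optimizing over $\alpha$ gives Gaussian off-diagonal decay, more than the polynomial factor $(1+t^{-1/2}|\x-\y|)^{-\hslash}$ required. The main obstacle is the collision-localized terms: the potential $V_\beta$ and the $O(\beta)$ non-compensated drift on $\mathcal N$. Together these generate a Feynman--Kac factor $\exp\{C(\beta^2+|\alpha|\beta)\int_0^t\ind_{\mathcal N}(\widetilde{\mathbf R}_s)\dd s\}$.

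I would not bound this by $L^\infty$. Instead I would use Hölder with exponent $p$ close to $1$ on the event $\{\widetilde{\mathbf R}_t\in \y+[-1,1]^k\}$, separating the collision factor from the indicator. By Lemma~\ref{vbound}, the occupation time of $\mathcal N$ has Gaussian tails at scale $t^{1/2}$, uniformly in $|\beta|$ small. With $|\alpha|\lesssim t^{-1/2}$ at the relevant distances, and since $\beta^2 t^{1/2}\le A^2N^{-1/4}(TN)^{1/2}$, the factor is not bounded uniformly. So the plan is the following. Choose $p=1+\delta'$, and bootstrap from the already proved reversible bound at $\beta=0$ (\cite[Theorem 4.7]{drillick_parekh_random_2025}). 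Compare the two Dirichlet forms on dyadic time blocks $[2^{-j-1}t,2^{-j}t]$; on each block the collision occupation is $O(2^{-j/2}t^{1/2})$. Then pay a factor $t^{\delta}$ from the Hölder conjugate exponent applied to $P(\cdot)^{1/p}$, using $t^{-k/(2p)}\le t^{-k/2+\delta}$ for $p$ close to $1$.

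Whether the $O(\beta)$ localized drift really cancels to leading order is what I expect to be hardest. I would check it first for $k=2$, where Proposition~\ref{pr:gapprocess-tilt} shows the gap is symmetric, so the defect lives only in the center of mass. I would then try to reduce $k\ge3$ to pairwise defects plus a center-of-mass drift, which is harmless for the box event after the frame shift. Finally, combine the block estimates by Chapman--Kolmogorov and the semigroup property, together with the weighted $L^1\to L^\infty$ bound $\|e^{\psi}P_te^{-\psi}\|_{1\to\infty}\le \|\cdot\|_{1\to 2}\|\cdot\|_{2\to\infty}$, to conclude for $t\in[t_0,TN]$.
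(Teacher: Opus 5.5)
There is a genuine gap. Your Step~3 is where the whole content of the lemma sits, and as written it does not close. By your own accounting, the factor to be controlled is $\exp\{C\beta^2\int_0^t\ind_{\mathcal N}(\mathbf R_s)\,\dd s\}$ with $\beta^2t^{1/2}\asymp N^{1/4}$, and none of the proposed remedies tames it.

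H\"older with $p$ close to $1$ makes matters worse. It calls for $q$-th moments with $q=p/(p-1)\to\infty$. Since $t^{-1/2}\int_0^t\ind_{\mathcal N}$ has Gaussian tails (it behaves like a Brownian local time), the $L^q$ norm of that factor is of order $e^{cq\beta^4t}$, with $\beta^4t\asymp N^{1/2}$.

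Bootstrapping from the $\beta=0$ bound cannot work either. On the event $\{\mathbf R_t\approx\x+\sinh(\beta)t\mathbf 1\}$, the Girsanov density of $\mathbf P^{(\beta,k)}_\x$ with respect to $\mathbf P^{(0,k)}_\x$ is of order $e^{k\beta^2t/2}\approx e^{cN^{3/4}}$. The untilted estimate only offers a polynomial factor $(1+\beta t^{1/2})^{-\hslash}$ at displacement $\sinh(\beta)t$. Comparing Dirichlet forms only sees the symmetric parts, which were never the problem. The dyadic blocks $[2^{-j-1}t,2^{-j}t]$ gain nothing, since the top block has length $t/2$ and already carries collision time of order $t^{1/2}$.

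The obstruction is also misidentified, and correcting it points to the missing idea. The defect of counting measure (your $V_\beta$) is the column sum $\sum_{\x}\cL^{(\beta,k)}(\x,\y)=V_{-\beta,k}(\y)-V_{\beta,k}(\y)$, with $V_{\beta,k}$ as in the proof of Proposition~\ref{lim_moments}. It is odd in $\beta$ while $V_{\beta,k}=O(\beta^2)$, hence it is $O(\beta^3)$. It vanishes identically for $k=2$, and is supported on configurations with at least three particles on two adjacent sites. So in an $L^2$ route, the real issue is triple collisions: they enter the energy identity and the column sums that Nash's $L^1$ bound needs. But Corollary~\ref{cor:HKUB-collisions} derives triple-collision control \emph{from} this lemma, so you would need a non-circular bootstrap that your plan does not contain.

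The localized $O(\beta)$ drift, on the other hand, never has to cancel; it only has to be sub-diffusive. This is automatic, since $t\le TN$ forces $|\beta|\le AN^{-1/8}\le Ct^{-1/8}$. The paper uses exactly this, with no $L^2$ theory:
\begin{enumerate}
\item Couple $\mathbf R$ with independent $\beta$-tilted walks $\mathbf S$ that copy $\mathbf R$'s jumps while $\mathbf R_{s-}\notin\mathsf C_k$ and move independently while $\mathbf R_{s-}\in\mathsf C_k$.
\item Then $\mathbf R-\mathbf S=\mathbf M+\mathbf A$ with $|\mathbf A_t|\le C|\beta|\int_0^t\ind_{\mathsf C_k}(\mathbf R_s)\,\dd s$ and $\langle M^i\rangle_t\le C\int_0^t\ind_{\mathsf C_k}(\mathbf R_s)\,\dd s$. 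Lemma~\ref{vbound} then gives $|\mathbf A_t|\le t^{3/8+\varepsilon}$ and $|\mathbf M_t|\le t^{1/4+\varsigma}$ outside stretched-exponentially small events.
\item This transfers the product-walk bound to boxes of size $t^{\rho}$, $\rho=5/12$, with $t^{-k/12}$ in place of $t^{-k/2}$.
\item Iterating via the Markov property at time $t-t^{2\rho}$ improves the exponent to $\frac{k}{12}\sum_{j=0}^m(5/6)^j\uparrow\frac k2$. This is exactly where the $t^{\delta}$ loss in the statement comes from.
\end{enumerate}
Your remark that a collision-localized center-of-mass drift should be harmless is the right instinct. The reason, however, is the quantitative bound $O(t^{3/8+\varepsilon})\ll t^{1/2}$ combined with this multiscale step, not the frame shift.
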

\begin{proof}
Set $\mathfrak s_\beta\eqdef \sinh(\beta)$ and  $\mathbf 1\eqdef (1,\ldots,1)\in\Z^k$. Recall that 
$\mathsf p_t^{(\beta,k)}(\x,\z)
=
\mathbf P_\x^{(\beta,k)}(\mathbf R_t=\z)
$ stands for
the heat kernel of the tilted $k$-point motion.

We first record the corresponding bound for the independent tilted walk. Let
$\mathbf S=(S^1,\ldots,S^k)$ be the process whose coordinates are independent
$\beta$-tilted one-point motions, with jump rates $e^\beta/2$ to the right and
$e^{-\beta}/2$ to the left. Let
$\mathbf P_\x^{(\beta,\otimes k)}$ denote its path law. The one-dimensional bound
\begin{equation}
\mathsf p_t^{(\beta,1)}(x,\lfloor \mathfrak s_\beta t\rfloor+y)
\le
C t^{-1/2}
\exp\tonde{
-c|y-x|\log\tonde{1+\frac{|y-x|}{t+1}}
}
\end{equation}
implies, by independence and for every $H>0$,
\begin{equation}\label{eq:independent-HKUB}
\mathbf P_\x^{(\beta,\otimes k)}
\tonde{
|\mathbf S_t-\mathfrak s_\beta t\mathbf 1-\y|\le r
}
\le
C r^k t^{-k/2}
\ttonde{1+t^{-1/2}|\x-\y|}^{-H}
\end{equation}
uniformly for $1\le r\le t^{1/2}$, $t\ge t_0$, and $|\beta|\le A$. In particular, the lemma is immediate when $k=1$. We therefore assume $k\ge 2$.

We now compare $\mathbf R$ with $\mathbf S$ by coupling. Let
\begin{equation}
\mathsf C_k
\eqdef
\big\{
\z\in\Z^k:\ \min_{1\le i<j\le k}|z_i-z_j|\le 1
\big\}
\end{equation}
be the collision neighbourhood. Outside $\mathsf C_k$, the tilted $k$-point
motion has the same transition rates as the independent tilted walk. We construct
a coupling as follows. When $\mathbf R_{t-}\notin\mathsf C_k$, the process
$\mathbf S$ makes exactly the same jump as $\mathbf R$; when
$\mathbf R_{t-}\in\mathsf C_k$, the process $\mathbf S$ evolves according to an
independent copy of the product tilted walk. With this construction, the marginal
law of $\mathbf S$ is precisely $\mathbf P_\x^{(\beta,\otimes k)}$.

Let
$
\mathbf X_t\eqdef \mathbf R_t-\mathbf S_t$.
Since $\mathbf X$ can change only while $\mathbf R_t\in\mathsf C_k$, its
Doob--Meyer decomposition has the form
\begin{equation}
\mathbf X_t=\mathbf M_t+\mathbf A_t\comma
\end{equation}
where $\mathbf M$ is a martingale and $\mathbf A$ is the compensator. From the
definition of the tilted generator, and using that the two generators agree outside
$\mathsf C_k$, there exists $C=C(k,A)>0$ such that
\begin{equation}\label{eq:A-bound}
|\mathbf A_t|
\le
C|\beta|\int_0^t \car_{\{\mathbf R_s\in\mathsf C_k\}}\,\dd s
\end{equation}
and, for each coordinate $1\le q\le k$,
\begin{equation}\label{eq:N-bracket-bound}
\langle M^q\rangle_t
\le
C\int_0^t \car_{\{\mathbf R_s\in\mathsf C_k\}}\,\dd s\fstop
\end{equation}
Indeed, the drift discrepancy between the interacting tilted motion and the
independent tilted motion is supported on $\mathsf C_k$ and is of size
$O(\sinh(\beta))=O(\beta)$, while the corresponding carré-du-champ discrepancy
is supported on $\mathsf C_k$ and is uniformly bounded, since $\cosh(\beta)$ is
bounded for $|\beta|\le A$.

We use the exponential moment estimate in Lemma \ref{vbound} for collision local times: for every
$B>0$,
\begin{equation}\label{eq:collision-exp-bound}
\max_{1\le i<j\le k}
\sup_{\x\in\Z^k}
\mathbf E_\x^{(\beta,k)}
\sqbra{
\exp\tonde{
B t^{-1/2}\int_0^t
\car_{\{|R_s^i-R_s^j|\le 1\}}\,\dd s
}
}
\le C(B,k)
\end{equation}
uniformly for $t\ge 1$, $N\in\N$, and $|\beta|\le AN^{-1/8}$. Since
\begin{equation}
\car_{\{\mathbf R_s\in\mathsf C_k\}}
\le
\textstyle{\sum_{1\le i<j\le k}}\,
\car_{\{|R_s^i-R_s^j|\le 1\}},
\end{equation}
the same estimate holds with the integral of
$\car_{\{\mathbf R_s\in\mathsf C_k\}}$ in place of any single pair-collision local
time, after changing the constant and using Hölder's inequality.

We first control the martingale part. By the exponential inequality for bounded-jump
martingales, for every $\theta\in(0,1)$ and $u>0$,
\begin{equation}
\mathbf P_\x^{(\beta,k)}(|M_t^q|>u)
\le
2e^{-\theta u}
\mathbf E_\x^{(\beta,k)}
\sqbra{
\exp\tonde{C\theta^2\langle M^q\rangle_t}
}\fstop
\end{equation}
Using \eqref{eq:N-bracket-bound}, choosing $\theta=t^{-1/4}$, and then applying
\eqref{eq:collision-exp-bound}, we obtain, for every $\varsigma>0$,
\begin{equation}\label{eq:N-tail}
\mathbf P_\x^{(\beta,k)}
\tttonde{
|\mathbf M_t|>t^{1/4+\varsigma}
}
\le
C\exp\tonde{-c t^\varsigma}
\end{equation}
uniformly in $\x,N,\beta$ and $t\in[t_0,TN]$, after enlarging $C$ to cover bounded
times.

We next control the compensator. From \eqref{eq:A-bound}, for every
$\theta>0$ and $u>0$,
\begin{equation}
\mathbf P_\x^{(\beta,k)}(|\mathbf A_t|>u)
\le
e^{-\theta u}
\mathbf E_\x^{(\beta,k)}
\sqbra{
\exp\tonde{
C\theta|\beta|
\int_0^t\car_{\{\mathbf R_s\in\mathsf C_k\}}\,\dd s
}
}\fstop
\end{equation}
Since $t\le TN$ and $|\beta|\le AN^{-1/8}$, we have
\begin{equation}
|\beta|\le C t^{-1/8}\fstop
\end{equation}
Choosing $\theta=t^{-3/8}$ and $u=t^{3/8+\varepsilon}$, with
$\varepsilon>0$, gives
\begin{equation}
\theta|\beta|\le C t^{-1/2}\comma
\qquad
\theta u=t^\varepsilon\fstop
\end{equation}
Therefore, by \eqref{eq:collision-exp-bound},
\begin{equation}\label{eq:A-tail}
\mathbf P_\x^{(\beta,k)}
\tttonde{
|\mathbf A_t|>t^{3/8+\varepsilon}
}
\le
C\exp\tonde{-c t^\varepsilon}\fstop
\end{equation}

We now choose
\begin{equation}
\rho\eqdef \frac{5}{12}\fstop
\end{equation}
Since $\mathbf R_t=\mathbf S_t+\mathbf M_t+\mathbf A_t$,
the bounds \eqref{eq:N-tail} with $\varsigma=1/6$, \eqref{eq:A-tail} with $\varepsilon=1/24$, and the independent estimate
\eqref{eq:independent-HKUB} yield
\begin{equation}\label{eq:coarse-HKUB-pre}
\mathbf P_\x^{(\beta,k)}
\tonde{
|\mathbf R_t-\mathfrak s_\beta t\mathbf 1-\y|\le t^\rho
}
\le
C t^{-k(1/2-\rho)}
\ttonde{1+t^{-1/2}|\x-\y|}^{-H}
+
C e^{-c t^{1/24}}\fstop
\end{equation}
Here $1/2-\rho=1/12$.

We claim that the stretched-exponential term in \eqref{eq:coarse-HKUB-pre} can
be absorbed into the first term, after changing $C$, provided $H$ is fixed large
enough. Indeed, if $|\x-\y|\le t^B$ for some large fixed $B$, then
$e^{-ct^{1/24}}$ is smaller than any negative power of $t$, and hence is bounded
by
\begin{equation}
C t^{-k/12}
\ttonde{1+t^{-1/2}|\x-\y|}^{-H}\fstop
\end{equation}
On the other hand, if $|\x-\y|>t^B$, then, by Markov's inequality,
\begin{equation}
\begin{aligned}
\mathbf P_\x^{(\beta,k)}
\tttonde{
|\mathbf R_t-\mathfrak s_\beta t\mathbf 1-\y|\le t^\rho
}
&\le
C\mathbf E_\x^{(\beta,k)}
\sqbra{
\exp\tonde{
-t^{-\rho}|\mathbf R_t-\mathfrak s_\beta t\mathbf 1-\y|
}
}
\\
&\le
C\exp\tonde{
-t^{-\rho}|\x-\y|
}
\mathbf E_\x^{(\beta,k)}
\sqbra{
\exp\tonde{
t^{-\rho}|\mathbf R_t-\x-\mathfrak s_\beta t\mathbf 1|
}
}
\\
&\le
C\exp\tonde{
-t^{-\rho}|\x-\y|+Ct
}\comma
\end{aligned}
\end{equation}
where the last step was obtained by passing to the exponential martingale for $\mathbf R$ and applying a crude bound in the integral term.
Choosing $B>\rho+1$, the right-hand side is bounded by
$\exp\{-c t^{-\rho}|\x-\y|\}$, which is smaller than
\begin{equation}
C t^{-k/12}
\ttonde{1+t^{-1/2}|\x-\y|}^{-H}
\end{equation}
for every prescribed $H$. Thus, for every $H>0$,
\begin{equation}\label{eq:coarse-HKUB}
\mathbf P_\x^{(\beta,k)}
\tonde{
|\mathbf R_t-\mathfrak s_\beta t\mathbf 1-\y|\le t^\rho
}
\le
C t^{-k/12}
\ttonde{1+t^{-1/2}|\x-\y|}^{-H}\fstop
\end{equation}

We next turn this coarse estimate into a weighted estimate. Fix $H>k$ and set
\begin{equation}
Q_H(\u)\eqdef (1+|\u|)^{-H}\fstop
\end{equation}
Using the covering inequality
\begin{equation}
Q_H(\u)
\le
C\sum_{\mathbf b\in\Z^k}
Q_H(\mathbf b)\car_{\{|\u-\mathbf b|\le 1\}}\comma
\end{equation}
with
\begin{equation}
\u=t^{-\rho}\tonde{\mathbf R_t-\mathfrak s_\beta t\mathbf 1-\y},
\end{equation}
and applying \eqref{eq:coarse-HKUB} to the centers
$\y+t^\rho\mathbf b$, we obtain
\begin{equation}
\begin{aligned}
\mathbf E_\x^{(\beta,k)}
\sqbra{
Q_H\tonde{
t^{-\rho}(\mathbf R_t-\mathfrak s_\beta t\mathbf 1-\y)
}
}
&\le
C t^{-k/12}
\sum_{\mathbf b\in\Z^k}
Q_H(\mathbf b)\,
Q_H\tttonde{
t^{-1/2}(\x-\y)-t^{\rho-1/2}\mathbf b
}
\\
&\le
C t^{-k/12}
Q_H\tttonde{t^{-1/2}(\x-\y)}\fstop
\end{aligned}
\end{equation}
In the last step we used the elementary convolution bound
\begin{equation}
\sum_{\mathbf b\in\Z^k}
Q_H(\mathbf b)\,Q_H(\u-\alpha\mathbf b)
\le
C Q_H(\u)\comma
\qquad
0<\alpha\le 1\comma
\end{equation}
which holds because $H>k$. Thus,
\begin{equation}\label{eq:base-weighted}
\mathbf E_\x^{(\beta,k)}
\sqbra{
\ttonde{
1+t^{-\rho}|\mathbf R_t-\mathfrak s_\beta t\mathbf 1-\y|
}^{-H}
}
\le
C t^{-k/12}
\ttonde{
1+t^{-1/2}|\x-\y|
}^{-H}\fstop
\end{equation}

We now bootstrap \eqref{eq:base-weighted} down to arbitrarily small mesoscopic
scales. We claim that, for every integer $m\ge 0$, there exists $C_m>0$ such that
\begin{equation}\label{eq:bootstrap-HKUB}
\mathbf E_\x^{(\beta,k)}
\sqbra{
\ttonde{
1+t^{-\rho(2\rho)^m}
|\mathbf R_t-\mathfrak s_\beta t\mathbf 1-\y|
}^{-H}
}
\le
C_m
t^{-\frac{k}{12}\sum_{j=0}^m(2\rho)^j}
\ttonde{
1+t^{-1/2}|\x-\y|
}^{-H}\fstop
\end{equation}
The case $m=0$ is exactly \eqref{eq:base-weighted}.

Assume \eqref{eq:bootstrap-HKUB} holds for $m-1$. Let
$
u=t^{2\rho}$.
By the Markov property at time $t-u$, and using the induction hypothesis for the
inner evolution of length $u$, we get
\begin{equation}
\begin{aligned}
&\mathbf E_\x^{(\beta,k)}
\sqbra{
\ttonde{
1+t^{-\rho(2\rho)^m}
|\mathbf R_t-\mathfrak s_\beta t\mathbf 1-\y|
}^{-H}
}
\\
&\quad =
\mathbf E_\x^{(\beta,k)}
\sqbra{
\mathbf E_{\mathbf R_{t-u}}^{(\beta,k)}
\sqbra{
\ttonde{
1+u^{-\rho(2\rho)^{m-1}}
|\mathbf R_u-\mathfrak s_\beta u\mathbf 1-(\y+\mathfrak s_\beta(t-u)\mathbf 1)|
}^{-H}
}
}
\\
&\quad \le
C_{m-1}
u^{-\frac{k}{12}\sum_{j=0}^{m-1}(2\rho)^j}
\mathbf E_\x^{(\beta,k)}
\sqbra{
\ttonde{
1+u^{-1/2}
|\mathbf R_{t-u}-\mathfrak s_\beta(t-u)\mathbf 1-\y|
}^{-H}
}\fstop
\end{aligned}
\end{equation}
Since $u^{-1/2}=t^{-\rho}$ and $t-u\sim t$, the last expectation is bounded by
\eqref{eq:base-weighted}, applied at time $t-u$. Therefore,
\begin{equation}
\begin{aligned}
&\mathbf E_\x^{(\beta,k)}
\sqbra{
\ttonde{
1+t^{-\rho(2\rho)^m}
|\mathbf R_t-\mathfrak s_\beta t\mathbf 1-\y|
}^{-H}
}
\\
&\qquad\le
C_m
t^{-\frac{k}{12}\sum_{j=1}^m(2\rho)^j}
t^{-k/12}
\ttonde{
1+t^{-1/2}|\x-\y|
}^{-H}
\\
&\qquad=
C_m
t^{-\frac{k}{12}\sum_{j=0}^m(2\rho)^j}
\ttonde{
1+t^{-1/2}|\x-\y|
}^{-H}\fstop
\end{aligned}
\end{equation}
This proves \eqref{eq:bootstrap-HKUB}.

Finally, since
$
2\rho=\frac56
$ and
$\sum_{j=0}^\infty\tonde{\frac56}^j=6$,
we may choose $m$ large enough so that
\begin{equation}
\frac{k}{12}\sum_{j=0}^m(2\rho)^j
\ge
\frac{k}{2}-\delta\fstop
\end{equation}
Taking $H\ge \hslash$ in \eqref{eq:bootstrap-HKUB}, and using
\begin{equation}
\car_{\{|\mathbf R_t-\mathfrak s_\beta t\mathbf 1-\y|\le \sqrt k\}}
\le
C
\tttonde{
1+t^{-\rho(2\rho)^m}
|\mathbf R_t-\mathfrak s_\beta t\mathbf 1-\y|
}^{-H}\comma
\end{equation}
we obtain
\begin{equation}
\mathbf P_\x^{(\beta,k)}
\tttonde{
|\mathbf R_t-\mathfrak s_\beta t\mathbf 1-\y|\le \sqrt k
}
\le
C t^{-k/2+\delta}
\ttonde{
1+t^{-1/2}|\x-\y|
}^{-\hslash}\fstop
\end{equation}
The event
\begin{equation}
\{|R_t^i-\mathfrak s_\beta t-y_i|\le 1,\ \forall\,1\le i\le k\}
\end{equation}
is contained in the event on the left-hand side above, after changing the constant.
This proves the lemma.
\end{proof}

By a union bound and a time-integration, we derive the following two immediate consequences of Lemma \ref{lem:HKUB}.
\begin{corollary}[Triple collisions]\label{cor:HKUB-collisions}
Fix $A>0$, $k\ge 3$, $\delta\in (0,\frac14)$, $0<t_0<T$, and $r\ge 1$. Then, for some $C>0$, and all $\x\in\Z^k$, $N\in \N$, $|\beta|\le AN^{-1/8}$ and $t\in[t_0,TN]$,
\begin{equation}\label{eq:HKUB-two-collisions}
\mathbf P_\x^{(\beta,k)}
(|R_t^i-R_t^j|\le r,\,
|R_t^\ell-R_t^m|\le r)
\le C t^{-1+\delta}\comma
\end{equation}
for all distinct unordered pairs $\{i,j\}\neq\{\ell,m\}$ in $\{1,\ldots,k\}$.
Consequently,
\begin{equation}\label{eq:HKUB-two-collisions-time}
\int_0^{TN}
\mathbf P_\x^{(\beta,k)}(
|R_t^i-R_t^j|\le r,\,
|R_t^\ell-R_t^m|\le r
)   
\,\dd t
\le C N^\delta\fstop
\end{equation}
\end{corollary}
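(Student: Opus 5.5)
The plan is to derive \eqref{eq:HKUB-two-collisions} from Lemma~\ref{lem:HKUB} by a union bound over lattice configurations, and then to integrate in time. Fix distinct pairs $\{i,j\}\neq\{\ell,m\}$. The union $\{i,j\}\cup\{\ell,m\}$ has either three elements (the pairs share an index) or four elements. In both cases the constraint $|R^i_t-R^j_t|\le r$, $|R^\ell_t-R^m_t|\le r$ fixes two independent linear combinations of the coordinates up to $O(r)$.

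First we reduce to the sub-motion on the indices $I=\{i,j\}\cup\{\ell,m\}$, with $|I|\in\{3,4\}$. Here there is a subtlety. By the discussion after Corollary~\ref{prop:inv}, marginals of the tilted $k$-point motion need not be tilted $|I|$-point motions. We therefore avoid projecting. Instead we apply Lemma~\ref{lem:HKUB} to the full $k$-point motion and sum over all the free coordinates.

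Concretely, write the event as a union over $\y\in\Z^k$ of the events $\{|R^q_t-\mathfrak s_\beta t-y_q|\le 1\ \forall q\}$, where $\y$ ranges over a mesh covering $\Z^k$ subject to the two gap constraints (up to $O(r)$). Fix $\mathbf y$ with the constraint imposed. The free directions are $k-2$ dimensional, and the two constrained directions contribute $O(r^2)$ choices. Lemma~\ref{lem:HKUB} with $\hslash>k$ gives
\[
\sum_{\y} C t^{-k/2+\delta}\bigl(1+t^{-1/2}|\x-\y|\bigr)^{-\hslash}.
\]
Summing the polynomial weight over a $(k-2)$-dimensional affine lattice slice gives a factor $C t^{(k-2)/2}$, uniformly in $\x$ and in the offset of the slice. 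Multiplying by the $O(r^2)$ choices yields
\[
C r^2 t^{-k/2+\delta}t^{(k-2)/2}=C t^{-1+\delta}.
\]
The only care needed is that the sum over the slice is uniform: decompose $\y$ into its component along the slice and its transverse part, and bound $(1+t^{-1/2}|\x-\y|)^{-\hslash}$ by the same weight restricted to the slice coordinates.

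For \eqref{eq:HKUB-two-collisions-time}, split $\int_0^{TN}$ into $[0,t_0]$ and $[t_0,TN]$. The first piece is at most $t_0$, since the probability is at most one. On the second piece, apply \eqref{eq:HKUB-two-collisions}:
\[
\int_{t_0}^{TN}Ct^{-1+\delta}\,\dd t\le C\delta^{-1}(TN)^\delta.
\]
Hence the total is $\le CN^\delta$. Increasing $\delta$ slightly is unnecessary, since the constant absorbs $T^\delta/\delta$.

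The main obstacle is the uniform lattice summation over the $(k-2)$-dimensional slice, together with the bookkeeping that the collision constraints cut out exactly two dimensions in both the $|I|=3$ and $|I|=4$ cases. Everything else is immediate from Lemma~\ref{lem:HKUB}.
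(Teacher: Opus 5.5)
Your proposal is correct and follows the paper's route, which the paper summarizes only as ``a union bound and a time-integration'' from Lemma~\ref{lem:HKUB}. You cover the event by unit boxes for the full tilted $k$-point motion, avoiding any non-Markovian marginal. You sum the weight over the $k-2$ unconstrained coordinates, which is legitimate because discarding the two constrained coordinates only decreases $|\x-\y|$; here $\hslash>k-2$ already suffices. You then integrate $Ct^{-1+\delta}$ over $[t_0,TN]$, which is exactly the intended argument.
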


\subsection{Dobrushin-type theorem for $k\ge 2$}

The next result extends Theorem \ref{th:Dobrushin-k=2} from a single gap to all pair gaps of the tilted $k$-point motion.
In what follows, $\mathbf 1\eqdef (1,\ldots,1)\in \R^k$.
\begin{theorem}[Dobrushin-type invariance principle: $k\ge 2$]\label{th:Dobrushin-k>2}
Fix $k\ge 2$.
Let $(\beta_N)_{N\ge1}$ be any sequence such that $\beta_N\to0$ and
$\sup_{N\ge1}\beta_N N^{1/8}<\infty$.
Set $\mathfrak s_N\eqdef \sinh(\beta_N)$.
Let $f,h:\Z\to\R$ be finite-support functions, with $f$ satisfying
$\sum_{x\in\Z}f(x)=0$.
Consider $N^{-1/2}\x^N\to\mathbf z\in\R^k$.
Then, under $\mathbf P_{\x^N}^{(\beta_N,k)}$, the processes
\begin{equation}
	\big(
	N^{-1/2}\tttonde{\mathbf R_{Nt}-tN\mathfrak s_N\mathbf 1}
	\big)_{t\ge0}\comma
	\qquad
	\big(
	\tttonde{
	N^{-1/2}\textstyle{\int_0^{Nt}}h(R_s^i-R_s^j)\,\dd s
	}_{1\le i<j\le k}
	\big)_{t\ge0}\comma
\end{equation}
\begin{equation}
	\big(
	\tttonde{
	N^{-1/4}\textstyle{\int_0^{Nt}}f(R_s^i-R_s^j)\,\dd s
	}_{1\le i<j\le k}
	\big)_{t\ge0}\comma
\end{equation}
jointly converge in law in $\cD([0,\infty);\R^{d_k})$, with $d_k\eqdef k+k(k-1)$, respectively to
\begin{equation}
	\begin{aligned}
		\tonde{
		\mathbf W_t}_{t\ge 0}\comma\qquad
		\big(\tttonde{\gamma^2(h)L_t^{W^i-W^j}}_{1\le i<j\le k}\big)_{t\ge 0}\comma\qquad
		\big(\tttonde{\sigma(f)B^{ij}_{L_t^{W^i-W^j}}}_{1\le i<j\le k}
		\big)_{t\ge0}\fstop
	\end{aligned}
\end{equation}
Here, $\mathbf W=(W^1,\ldots,W^k)$ is a standard $k$-dimensional Brownian motion started from $\z\in\R^k$, the processes $(B^{ij})_{1\le i<j\le k}$ are independent standard Brownian motions independent of $\mathbf W$, and the noise coefficients $\gamma^2(h)$ and $\sigma(f)$ are given, respectively, in \eqref{eq:W+local-time} and \eqref{eq:sigma-squared}.
\end{theorem}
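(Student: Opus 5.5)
The plan is to mimic the martingale proof of Theorem~\ref{th:Dobrushin-k=2}, now for the $k$-point motion. The new difficulty is that each pair gap $X^{ij}\eqdef R^i-R^j$ is no longer Markov when $k\ge3$ and $\beta_N\neq0$.

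For each pair $i<j$, set $g_N\eqdef G_{\beta_N}f$ from Lemma~\ref{lem:green-gap-beta} and consider
\begin{equation}
M^{ij,N}_t\eqdef g_N(X^{ij}_t)-g_N(X^{ij}_0)-\int_0^t \cL^{(\beta_N,k)}\ttonde{g_N\circ(x_i-x_j)}(\mathbf R_s)\,\dd s\comma
\end{equation}
which is a martingale for the $k$-point motion. The key observation is that, whenever no third particle lies within distance $1$ of $\{R^i,R^j\}$, the $k$-point generator acts on functions of $x_i-x_j$ exactly as $\cL^{\rm gap}_{\beta_N}$. In that situation the drift integrand equals $f(X^{ij}_s)$.

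The rates are short range and $g_N$ is bounded and eventually constant. Hence the drift integrand differs from $f(X^{ij}_s)$ only on the triple-collision set
\begin{equation}
\{|X^{ij}_s|\le r,\ \min_{\ell\ne i,j}\min(|R^\ell_s-R^i_s|,|R^\ell_s-R^j_s|)\le 1\}\comma
\end{equation}
where $r$ is a range determined by $f$. On that set the difference is uniformly bounded. By \eqref{eq:HKUB-two-collisions-time} of Corollary~\ref{cor:HKUB-collisions} with $\delta<1/4$, the expected error up to time $TN$ is $O(N^\delta)$. After multiplying by $N^{-1/4}$ it vanishes in $L^1$, uniformly on compacts, since it is a monotone bound on an integral of an absolute value. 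So $N^{-1/4}\int_0^{tN}f(X^{ij}_s)\,\dd s=-N^{-1/4}M^{ij,N}_{tN}+o(1)$. The same reduction applies to $N^{-1/2}\int h$, whose limit is already handled by Lemma~\ref{vprocess}. That lemma gives joint convergence of $N^{-1/2}(\mathbf R_{tN}-tN\mathfrak s_N\mathbf 1)$ to $\mathbf W$, and each $h$-functional is a measurable function of the limit path.

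Next I would compute the predictable brackets. The diagonal bracket $\langle M^{ij,N}\rangle$ has density $\Gamma^{(k)}_{\beta_N}(g_N\circ(x_i-x_j))$. Off triple collisions this equals $\Gamma_{\beta_N}(g_N,g_N)(X^{ij})$, and Corollary~\ref{cor:HKUB-collisions} again kills the discrepancy. So by \eqref{eq:carre-du-champ-conv} and Lemma~\ref{vprocess},
\begin{equation}
N^{-1/2}\langle M^{ij,N}\rangle_{tN}\Rightarrow\sigma^2(f)L^{W^i-W^j}_t\fstop
\end{equation}
For distinct pairs $\{i,j\}\ne\{\ell,m\}$, the cross bracket density is supported where both gaps are bounded, a two-collision event. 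It is therefore $O(N^{\delta})$ in expectation, and $O(N^{\delta-1/2})\to0$ after scaling. If the pairs share an index, say $\{i,j\}$ and $\{i,m\}$, this support forces a triple collision, or two simultaneous pair collisions when the pairs are disjoint, and the same bound applies. The brackets with the coordinates $N^{-1/2}R^q$ have bounded density supported on $\{|X^{ij}|\le r\}$. By Lemma~\ref{vbound} they are $O(N^{-3/4}N^{1/2})\to0$.

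Tightness and continuity of limit points follow exactly as in the proof of Theorem~\ref{th:Dobrushin-k=2}, using Aldous's criterion with \eqref{eq:additive-functional-moment}. Any limit point $(\mathbf W,(M^{ij,\infty}))$ is then a continuous martingale with
\begin{equation}
\langle M^{ij,\infty}\rangle=\sigma^2(f)L^{W^i-W^j},\qquad \langle M^{ij,\infty},M^{\ell m,\infty}\rangle=0,\qquad \langle M^{ij,\infty},W^q\rangle=0\fstop
\end{equation}
The identification then repeats the argument behind \eqref{e:fil2} with a vector exponent. I would apply the martingale representation to $F(\mathbf W)$ with respect to the $k$-dimensional Brownian motion, together with the stochastic exponential of $\sum a_{ij}M^{ij,\infty}$. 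This yields
\begin{equation}
\mathbf E\Big[e^{\sum a_{ij}(M^{ij,\infty}_t-M^{ij,\infty}_s)}\,\Big|\,\mathbf W\Big]=\exp\Big\{\tfrac12\sigma^2(f)\sum a_{ij}^2\ttonde{L^{W^i-W^j}_t-L^{W^i-W^j}_s}\Big\}\comma
\end{equation}
first for single intervals and then for finitely many disjoint ones. This identifies the limit law as $(\sigma(f)B^{ij}_{L^{W^i-W^j}})$ with independent $B^{ij}$.

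The main obstacle is the error control in the first step: the non-Markovianity of the gaps must be absorbed into triple-collision terms at the finer scale $N^{-1/4}$. This is exactly what Lemma~\ref{lem:HKUB} is designed for, through the $N^{\delta}$ bound with $\delta<1/4$.
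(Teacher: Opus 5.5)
Your proposal is correct and follows essentially the same route as the paper: Green-function martingales for $g_N(R^i-R^j)$, where both the Poisson-equation defect and the cross-brackets are supported on two-pair collision sets. These are killed at scale $N^{\delta-1/4}$ (resp.\ $N^{\delta-1/2}$) by Corollary~\ref{cor:HKUB-collisions}, followed by the stochastic-exponential identification from Theorem~\ref{th:Dobrushin-k=2}. The only detail you skip is that $N^{-1/2}(\mathbf R_{tN}-tN\mathfrak s_N\mathbf 1)$ is not exactly a martingale under $\mathbf P^{(\beta_N,k)}_{\x^N}$, and Lemma~\ref{vprocess} only covers pair gaps; the paper handles this by passing to the Dynkin martingale $\mathbf Q$, and the discrepancy is an $O(\beta_N)$ drift supported on collisions, hence $O(N^{-1/8})$ after rescaling by Lemma~\ref{vbound}.
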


\begin{proof}
Write $\mathbf P^N\eqdef \mathbf P_{\x^N}^{(\beta_N,k)}$, and let $\mathbf P$ denote the joint law of the limiting processes displayed in the statement. 
By the general results of \cite{drillick_parekh_random_2025}, the first marginals satisfy 
\begin{equation}
	\big(
		N^{-1/2}\tonde{\mathbf R_{tN}-tN\mathfrak s_N\mathbf 1}
	\big)_{t\ge 0}
	\xLongrightarrow[N\to \infty]{}
	\tonde{\mathbf W_t}_{t\ge 0}\comma\quad\text{in}\ \cD([0,\infty);\R^k)\fstop
\end{equation}
 Indeed, as shown in
\cite[Appendix A, Step 3 in the proof of Theorem 4.6]{drillick_parekh_random_2025},  $tN\mathfrak s_N\mathbf 1$ may be replaced, up to an error vanishing in
$\cC([0,\infty);\R^k)$, by
\begin{equation}
	\textstyle{\int_0^t}\,\cL^{(\beta_N,k)}\operatorname{id}_{\Z^k}(\mathbf R_s)\,\dd s\fstop
\end{equation}
(The error there is $\cB_N(t)$). Hence, it is enough to prove the
claim with the centered process replaced by the Dynkin martingale (under $\mathbf P^N$)
\begin{equation}
	\mathbf Q_t
	\eqdef
	N^{-1/2}\tonde{
		\mathbf R_t
		-
		\int_0^t \cL^{(\beta_N,k)}
		\operatorname{id}_{\Z^k}(\mathbf R_s)\,\dd s
	}\comma\qquad t \ge 0\fstop
\end{equation}

Fix $1\le i<j\le k$. Let
\begin{equation}
	g_N^{ij}(x_1,\ldots,x_k)
	\eqdef
	(G^N f)(x_i-x_j)\comma
\end{equation}
where $G^N$ is the Green function for the $\beta_N$-tilted two-point gap
process. By Dynkin's formula,
\begin{equation}
	H_t^{ij}=H_t^{ij,N}
	\coloneqq
	g_N^{ij}(\mathbf R_t)
	-
	\int_0^t
	\cL^{(\beta_N,k)}g_N^{ij}(\mathbf R_s)\,\dd s\comma\qquad t \ge 0\comma
\end{equation}
is a $\mathbf P^N$-martingale (with respect to the $\mathbf R$-filtration).
We compare $\cL^{(\beta_N,k)}g_N^{ij}$ with the two-point gap generator. If no
third coordinate is close to either $x_i$ or $x_j$, then the infinitesimal dynamics
of the difference $R^i-R^j$ agrees with that of the tilted
two-point gap process. Consequently, since $G^N f$ solves the corresponding
Poisson equation,
\begin{equation}
	\cL^{(\beta_N,k)}g_N^{ij}(x_1,\ldots,x_k)
	=
	f(x_i-x_j)
\end{equation}
away from triple interactions. Equivalently, there exists $\varrho>0$ such that the
difference
\begin{equation}
	\cL^{(\beta_N,k)}g_N^{ij}(\mathbf x)-f(x_i-x_j)
\end{equation}
can be nonzero only when $x_i$ or $x_j$ lies within distance $\varrho$ of a third
coordinate. By Corollary \ref{cor:HKUB-collisions}, uniformly for $t\ge 0$ in a compact interval,
\begin{equation}
	N^{-1/4}
	\int_0^{tN}
	\tttonde{
		\cL^{(\beta_N,k)}g_N^{ij}(\mathbf R_s)
		-
		f(R_s^i-R_s^j)
	}\,\dd s
	\xLongrightarrow[N\to \infty]{} 0\fstop
\end{equation}

As in the proof of Theorem \ref{th:Dobrushin-k=2}, the functions $g_N^{ij}$ are
uniformly bounded in $N$; moreover, they are constant on the two regions
$\{x_i-x_j>\varrho\}$ and $\{x_i-x_j<-\varrho\}$, after increasing $\varrho$ if
necessary. Therefore, uniformly over compact time intervals,
\begin{equation}
	N^{-1/4}g_N^{ij}(\mathbf R_{tN})
	-
	N^{-1/4}g_N^{ij}(\mathbf R_0)
	\xLongrightarrow[N\to \infty]{} 0\fstop
\end{equation}
 From the martingale identity defining
$H^{ij,N}$, we get, uniformly over compact time intervals,
\begin{equation}
	N^{-1/4}
	\tonde{
		H_{tN}^{ij}
		+
		\int_0^{tN}
		\cL^{(\beta_N,k)}g_N^{ij}(\mathbf R_s^N)\,\dd s
	}
	\xLongrightarrow[N\to \infty]{} 0\fstop
\end{equation}
 Combining the last two
estimates yields, uniformly over compact time intervals,
\begin{equation}
	N^{-1/4}
	\tonde{
		H_{tN}^{ij}
		+
		\int_0^{tN}
		f(R_s^{i,\beta_N}-R_s^{j,\beta_N})\,\dd s
	}
	\xLongrightarrow[N\to \infty]{} 0\fstop
\end{equation} Thus, the desired Dobrushin additive
functional may be replaced by the martingale $-H^{ij}$.

It remains to prove the joint convergence
\begin{equation}
	\tonde{
		\mathbf Q_{tN},
		\tttonde{N^{-1/4}H_{tN}^{ij}}_{1\le i<j\le k}
	}_{t\ge 0}
	\xLongrightarrow[N\to \infty]{}
	\tonde{
		\mathbf W_t,
		\tttonde{\sigma(f)B_{L_t^{W^i-W^j}}^{ij}}_{1\le i<j\le k}
	}_{t\ge 0}\fstop
\end{equation}
 Since the first marginal already
converges to $\mathbf W$, and all components on the left-hand side are martingales,
the convergence of each pair
$(\mathbf Q_{tN},
		N^{-1/4}H_{tN}^{ij}
	)_{t\ge 0}$
follows by the same martingale argument used in the proof of
Theorem \ref{th:Dobrushin-k=2}. In particular, the quadratic variation of
$(N^{-1/4}H^{ij}_{tN})_{t\ge 0}$ converges to
$
	(\sigma(f)L_t^{W^i-W^j})_{t\ge 0}
$,
and its covariation with $\mathbf (Q_{tN})_{t\ge0}$ vanishes.

It remains only to identify the joint law of the limiting martingales for different
pairs. Let $\{i,j\}\ne\{h,\ell\}$. The predictable covariation
\begin{equation}
	\langle
		N^{-1/4}H^{ij},
		N^{-1/4}H^{h\ell}
	\rangle_{tN}
\end{equation}
is supported on configurations where the two pair interactions occur
simultaneously. Such configurations force a triple interaction: at least three
distinct coordinates must lie within distance $\varrho$ of one another. Hence this
covariation is bounded by a constant times
\begin{equation}
	N^{-1/2}
	\int_0^{tN}
	\mathbf 1_{\{\text{three coordinates of }\mathbf R_s\ 
	\text{are within distance }\varrho\}}\,\dd s\fstop
\end{equation}
By Corollary \ref{cor:HKUB-collisions}, this bound converges to zero in probability, uniformly over compact time intervals. Therefore the limiting martingales corresponding to distinct pairs are
orthogonal. The same moment-generating-function argument used in the proof of
Theorem \ref{th:Dobrushin-k=2} then identifies them as conditionally independent
Brownian motions time-changed by the corresponding pair local times. This proves
the claimed joint convergence.
\end{proof}

The next result is the full $k$-moment version of Proposition \ref{pr:second-moment}.

\begin{proposition}[Moment asymptotics: $k\ge 2$]\label{lim_moments}
	Fix $k\ge2$, $t\ge0$, and $\phi_1,\ldots,\phi_k\in\cC_b(\R)$. Set $\lambda=7/8$ and $\beta_N\eqdef\beta_{\lambda,N}$. Let $\x^N=(x_1^N,\ldots,x_k^N)\in\Z^k$ be such that $N^{-1/2}\x^N\to\mathbf z=(z_1,\ldots,z_k)\in\R^k$. Then,
	\begin{equation}\label{eq:k-moment-asymptotics}
	\E\bigg[
		\prod_{j=1}^k
		\mathfrak H_{\beta_N,N}(t,x_j^N,\phi_j)
	\bigg]
	\xrightarrow[N\to \infty]{}
	\mathbf E_{\mathbf z}^{{\rm BM}^{\otimes k}}
	\bigg[
		\exp\bigg\{\frac12\sum_{1\le i<j\le k}L_t^{W^i-W^j}\bigg\}
		\prod_{j=1}^k\phi_j(W_t^j)
	\bigg]\fstop
	\end{equation}
	In particular, when $\x^N=\mathbf 0\in \Z^k$ and $\phi_1=\ldots=\phi_k=\phi\in \cC_b(\R)$, this gives the $k$-th moment asymptotics of $\cY_{7/8,N}(t,\phi)$.
\end{proposition}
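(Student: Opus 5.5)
We follow the proof of Proposition~\ref{pr:second-moment}, now with Theorem~\ref{th:Dobrushin-k>2} and Corollary~\ref{cor:HKUB-collisions} taking the place of Theorem~\ref{th:Dobrushin-k=2}. Proposition~\ref{cor:moments} rewrites the left-hand side of \eqref{eq:k-moment-asymptotics} as
\[
\mathbf E_{\x^N}^{(\beta_N,k)}\Big[\mathscr E_{\beta_N,k}(tN)\prod_{j=1}^k\phi_j\big(N^{-1/2}(R^j_{tN}-tN\mathfrak s_N)\big)\Big].
\]

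The first step is to decompose the $k$-point discrepancy $e^{-F_\beta}(\cL^{(k)}-\cL^{\otimes k})e^{F_\beta}(\x)$. Both generators are sums over bonds, and the tilt factor is multiplicative over particles. Consequently, on configurations in which every bond is touched by at most two particles, the discrepancy equals $\sum_{i<j}D_\beta(x_i-x_j)$, where $D_\beta=(\mathfrak c_\beta-1)\Phi_1+(\mathfrak c_\beta-1)^2\Phi_2$ is the function from Lemma~\ref{lem:two-point-discrepancy}. The remainder $\mathcal R_\beta(\x)$ is supported on the set where three or more coordinates lie within distance $2$ of each other. Because every bond term vanishes when fewer than two particles move, $|\mathcal R_\beta|\le C\beta^2\car_{\{\text{triple}\}}$; here we only need that it is $O(\beta^2)$. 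The triple-collision set is contained in the union over distinct pairs $\{i,j\}\neq\{\ell,m\}$ of the events $\{|x_i-x_j|\le r,\ |x_\ell-x_m|\le r\}$. By \eqref{eq:HKUB-two-collisions-time}, $\beta_N^2\int_0^{tN}|\mathcal R_{\beta_N}(\mathbf R_s)|\,\dd s$ has expectation at most $CN^{-1/4+\delta}\to0$.

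The second step identifies the limit of the exponent. Writing $\mathfrak c_{\beta_N}-1=\tfrac12N^{-1/4}(1+o(1))$, the exponent equals
\[
\sum_{i<j}\Big[\tfrac12 N^{-1/4}\!\int_0^{tN}\!\Phi_1(R^i_s-R^j_s)\,\dd s+\tfrac14N^{-1/2}\!\int_0^{tN}\!\Phi_2(R^i_s-R^j_s)\,\dd s\Big]+o_{\mathbf P}(1).
\]
The $o(1)$ corrections in $\mathfrak c_{\beta_N}-1$ multiply quantities that are tight by Lemma~\ref{vbound}, so they vanish in the limit. We then apply Theorem~\ref{th:Dobrushin-k>2} with $f=\Phi_1$, $\sigma^2(\Phi_1)=2$ from \eqref{eq:sigma-computation}, and $h=\Phi_2$, $\gamma^2=1$. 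This gives joint convergence of the centered positions and the exponent to
\[
\Big(\mathbf W_t,\ \sum_{i<j}\big[\tfrac1{\sqrt2}B^{ij}_{L_t^{W^i-W^j}}+\tfrac14L_t^{W^i-W^j}\big]\Big).
\]
Since the $\phi_j$ are bounded and continuous and the exponential is continuous, the integrand converges in law. For the conditional expectation, we use that, given $\mathbf W$, the $B^{ij}$ are independent. The Gaussian computation from Proposition~\ref{pr:second-moment} then yields the factor $\exp\{\tfrac12\sum_{i<j}L_t^{W^i-W^j}\}$ multiplied by $\prod_j\phi_j(W^j_t)$.

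The third step, which I expect to be the main obstacle, is uniform integrability of $\mathscr E_{\beta_N,k}(tN)$. The $\Phi_2$ and triple-collision terms are easy to control: $|\Phi_2|$-type functionals at scale $N^{-1/2}$ have uniformly bounded exponential moments by \eqref{eq:additive-functional-exp}, and the triple term satisfies $|\mathcal R|\le C N^{-1/4}\sum_{i<j}\car_{\{|R^i-R^j|\le r\}}$, which is also covered by \eqref{eq:additive-functional-exp}. The delicate part is $N^{-1/4}\int\Phi_1$. Lemma~\ref{vbound} only bounds $N^{-1/4}\int|\Phi_1|\le N^{-1/4}\cdot O(N^{1/2})$, which is useless because of the cancellation. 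I would instead use the martingale decomposition from Theorem~\ref{th:Dobrushin-k>2}: $\int_0^{tN}\Phi_1(R^i_s-R^j_s)\,\dd s=-H^{ij}_{tN}+O(1)+(\text{triple-collision error})$. The term $H^{ij}$ has jumps bounded by $2\sup|G^N\Phi_1|$ and predictable bracket at most $C\int\car_{\{|R^i-R^j|\le\varrho\}}$. The exponential inequality for bounded-jump martingales gives
\[
\mathbf E\big[e^{pN^{-1/4}H}\big]\le \mathbf E\big[e^{C p^2 N^{-1/2}\langle H\rangle}\big]^{1/2},
\]
after Cauchy–Schwarz with the stochastic exponential, and the right-hand side is bounded by \eqref{eq:additive-functional-exp}. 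The triple-collision error is $N^{-1/4}$ times an integral supported near triple collisions, hence again dominated by pair local times at scale $N^{-1/4}\ll N^{-1/2}\cdot N^{1/4}$ and controlled by \eqref{eq:additive-functional-exp}. Hölder's inequality over the finitely many pairs gives $\sup_N\mathbf E[\mathscr E_{\beta_N,k}(tN)^{p}]<\infty$ for some $p>1$. Combined with the convergence in law, this proves \eqref{eq:k-moment-asymptotics}.
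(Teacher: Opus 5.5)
Your route is the same as the paper's: moment representation, reduction of $V_{\beta_N,k}$ to pair terms off triple collisions, Theorem~\ref{th:Dobrushin-k>2} with $\sigma^2(\Phi_1)=2$ and $\gamma^2(\Phi_2)=1$, and the conditional Gaussian computation. Two of your deviations are improvements.

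\begin{itemize}
\item In Step~1 you use that $V_{\beta,k}=\sum_{i<j}D_\beta(x_i-x_j)$ holds exactly whenever every bond carries at most two particles. You also use that $|V_{\beta,k}|$ and $|\sum_{i<j}D_\beta|$ are at most $C_k\beta^2$ everywhere, since $\Psi_\beta(n,m)$ vanishes to second order at $\beta=0$. This is cleaner than the paper's fourth-order expansion of $\Psi_\beta(n,m)$. You mean $\int_0^{tN}|\mathcal R_{\beta_N}(\mathbf R_s)|\,\dd s\le C\beta_N^2\int_0^{tN}\car_{\rm triple}(\mathbf R_s)\,\dd s$, not $\beta_N^2\int|\mathcal R_{\beta_N}|$.
\item In Step~3, your exponential inequality for the bounded-jump martingale $H^{ij}$, with bracket dominated by pair occupation times at scale $N^{-1/2}$, is the right way to get exponential moments of $N^{-1/4}\int\Phi_1$. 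The paper's one-line appeal to Lemma~\ref{vbound} does not cover this term, for exactly the reason you give.
\end{itemize}

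The gap is in the exponential moments of the triple-collision terms. These are $\int_0^{tN}\mathcal R_{\beta_N}(\mathbf R_s)\,\dd s$ and the Poisson-equation error $N^{-1/4}\int_0^{tN}\big(\cL^{(\beta_N,k)}g^{ij}_N(\mathbf R_s)-\Phi_1(R^i_s-R^j_s)\big)\,\dd s$.

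\begin{itemize}
\item You dominate both by $CN^{-1/4}\sum_{i<j}\int_0^{tN}\car_{\{|R^i_s-R^j_s|\le r\}}\,\dd s$ and invoke \eqref{eq:additive-functional-exp}. That estimate only controls $a(tN)^{-1/2}\int_0^{tN}$ for fixed $a$, whereas here $a\asymp N^{1/4}$.
\item By Lemma~\ref{vprocess}, the dominating quantity is $N^{1/4}Y_N$ with $Y_N\Rightarrow(2r+1)L_t^{W^i-W^j}$. Hence $\mathbf E[e^{\kappa N^{1/4}Y_N}]\ge e^{\kappa N^{1/4}\epsilon}\,\mathbf P(Y_N>\epsilon)\to\infty$. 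Its exponential moments genuinely blow up. Your comparison ``$N^{-1/4}\ll N^{-1/2}\cdot N^{1/4}$'' is in fact an equality.
\item Replacing the triple indicator by a pair indicator discards exactly the smallness you need.
\end{itemize}

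Keep the triple indicator and use Khas'minskii's lemma instead. Corollary~\ref{cor:HKUB-collisions} is uniform in the starting point, so $\sup_{\x}\mathbf E^{(\beta_N,k)}_{\x}\big[N^{-1/4}\int_0^{tN}\car_{\rm triple}(\mathbf R_s)\,\dd s\big]\le CN^{-1/4+\delta}\to0$. Hence $\sup_{\x}\mathbf E^{(\beta_N,k)}_{\x}\big[\exp\{pCN^{-1/4}\int_0^{tN}\car_{\rm triple}(\mathbf R_s)\,\dd s\}\big]\le(1-C'pN^{-1/4+\delta})^{-1}$, which is bounded for large $N$. With this substitution, your H\"older argument does give $\sup_N\mathbf E[\mathscr E_{\beta_N,k}(tN)^p]<\infty$ for some $p>1$.

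A smaller slip of the same type occurs in Step~2. The $O(N^{-1/2})$ correction to $\mathfrak c_{\beta_N}-1$ multiplies $\int_0^{tN}\Phi_1$, and Lemma~\ref{vbound} only yields $N^{-1/2}\int_0^{tN}|\Phi_1|=O_{\mathbf P}(1)$. The needed $o_{\mathbf P}(1)$ comes from tightness of $N^{-1/4}\int_0^{tN}\Phi_1$ (Theorem~\ref{th:Dobrushin-k>2}), not from Lemma~\ref{vbound}.
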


\begin{proof}
	By the moment representation in Proposition \ref{cor:moments}, the left-hand side of \eqref{eq:k-moment-asymptotics} equals
	\begin{equation}\label{eq:k-moment-representation-start}
		\mathbf E_{\x^N}^{(\beta_N,k)}
		\bigg[
			\exp\set{\int_0^{tN}V_{\beta_N,k}(\mathbf R_s)\,\dd s}
			\prod_{j=1}^k
			\phi_j\tttonde{N^{-1/2}\tttonde{R_{tN}^j-tN\sinh(\beta_N)}}
		\bigg]\comma
	\end{equation}
	where
	\begin{equation}
		V_{\beta,k}(\x)
		\eqdef
		e^{-F_\beta}(\x)
		\ttonde{\cL^{(k)}-\cL^{\otimes k}}e^{F_\beta}(\x)\fstop
	\end{equation}
	We first compute $V_{\beta,k}$. For $\x\in\Z^k$, write
	\begin{equation}
		n_z(\x)\eqdef \#\{i:\,x_i=z\}\comma\qquad
		z\in\Z\fstop
	\end{equation}
	Then,
	\begin{equation}\label{eq:discrepancy}
		V_{\beta,k}(\x)
		=
		\sum_{z\in\Z}\Psi_\beta(n_z(\x),n_{z+1}(\x))\comma
	\end{equation}
	where, for $n,m\in\N_0$,
	\begin{equation}
		\Psi_\beta(n,m)
		\eqdef
		\tonde{\frac{1+e^\beta}{2}}^n
		\tonde{\frac{1+e^{-\beta}}{2}}^m
		-1
		-
		n\tonde{\frac{e^\beta-1}{2}}
		-
		m\tonde{\frac{e^{-\beta}-1}{2}}\fstop
	\end{equation}
	Indeed, decompose $\cL^{(k)}=\sum_{z\in \Z} \cL_z^{(k)}$ and $\cL^{\otimes k}=\sum_{z\in \Z}\cL_z^{\otimes k}$ according to the bond $(z,z+1)$. If
	\begin{equation}
		S_z(\x)\eqdef\{i:\,x_i\in\{z,z+1\}\}\comma
	\end{equation}
	and $\x^{z,A}$ is obtained from $\x$ by swapping the coordinates with labels in $A\subset S_z(\x)$ across the bond $(z,z+1)$, then
	\begin{equation}
		e^{-F_\beta}(\x)\cL_z^{(k)}e^{F_\beta}(\x)
		=
		2^{-n_z(\x)-n_{z+1}(\x)}
		\sum_{A\subset S_z(\x)}e^{F_\beta(\x^{z,A})-F_\beta(\x)}-1\fstop
	\end{equation}
	If $a$ particles are swapped from $z$ to $z+1$ and $b$ from $z+1$ to $z$, then
	\begin{equation}
		F_\beta(\x^{z,A})-F_\beta(\x)=\beta(a-b)\fstop
	\end{equation}
	Therefore,
	\begin{equation}
		\sum_{A\subset S_z(\x)}e^{F_\beta(\x^{z,A})-F_\beta(\x)}
		=
		(1+e^\beta)^{n_z(\x)}(1+e^{-\beta})^{n_{z+1}(\x)}\fstop
	\end{equation}
	On the other hand,
	\begin{equation}
		e^{-F_\beta}(\x)\cL_z^{\otimes k}e^{F_\beta}(\x)
		=
		\frac{n_z(\x)}2(e^\beta-1)+\frac{n_{z+1}(\x)}2(e^{-\beta}-1)\comma
	\end{equation}
	which proves \eqref{eq:discrepancy}.

	We now expand at $\beta=0$. Since $\Psi_\beta(n,m)=0$ for $n+m\le1$, only occupied bonds with at least two particles contribute. For $n+m\ge2$,
	\begin{equation}\label{eq:Psi-expansion}
		\Psi_\beta(n,m)
		=
		\frac{c_2(n,m)}{2^2 2!}\beta^2
		+
		\frac{c_3(n,m)}{2^3 3!}\beta^3
		+
		\frac{c_4(n,m)}{2^4 4!}\beta^4
		+O_k(\beta^5)\comma
	\end{equation}
	where
	\begin{equation}
	\begin{aligned}
		c_2(n,m)&\eqdef (n-m)^2-(n+m)\comma\\
		c_3(n,m)&\eqdef (n-m)\tonde{(n-m)^2+3(n+m)-4}\comma\\
		c_4(n,m)&\eqdef (n-m)^4+6(n-m)^2(n+m)+3(n+m)^2-10(n+m)\fstop
	\end{aligned}
	\end{equation}
	On configurations without triple collisions, every nontrivial contribution is of the form $(n,m)\in\{(1,1),(2,0),(0,2)\}$. For these values,
	\begin{equation}
		c_2(1,1)=-2\comma\qquad c_3(1,1)=0\comma\qquad c_4(1,1)=-8\comma
	\end{equation}
	and
	\begin{equation}
		c_2(2,0)=c_2(0,2)=2\comma\qquad
		c_3(2,0)=-c_3(0,2)=24\comma\qquad
		c_4(2,0)=c_4(0,2)=56\fstop
	\end{equation}
	The cubic terms cancel pairwise between the bonds adjacent to a double occupation. Hence, after integrating in time and using Corollary \ref{cor:HKUB-collisions} to remove the triple-collision contribution, \eqref{eq:Psi-expansion} gives
	\begin{equation}\label{eq:V-beta-effective}
	\begin{aligned}
		\int_0^{tN}V_{\beta_N,k}(\mathbf R_s)\,\dd s
		&=
		\frac12 N^{-1/4}
		\sum_{1\le i<j\le k}
		\int_0^{tN}\Phi_1(R_s^i-R_s^j)\,\dd s
		\\
		&\quad+
		\frac14 N^{-1/2}
		\sum_{1\le i<j\le k}
		\int_0^{tN}\Phi_2(R_s^i-R_s^j)\,\dd s
		+o_{\mathbf P}(1)\fstop
	\end{aligned}
	\end{equation}
	Here $\Phi_1$ and $\Phi_2$ are the functions in \eqref{eq:f-beta-1-2}. The $O_k((\beta_N)^5)$ remainder is negligible because $(\beta_N)^5\int_0^{tN}\car_{\{\mathbf R_s\in\mathsf C_k\}}\,\dd s=o_{\mathbf P}(1)$ by Lemma \ref{vbound}; the remaining exceptional configurations are negligible by Corollary \ref{cor:HKUB-collisions}.

	We can now apply Theorem \ref{th:Dobrushin-k>2}. Since $\sum_x\Phi_1(x)=0$, $\sum_x\Phi_2(x)=1$, and, as computed in the proof of Proposition \ref{pr:second-moment} (see \eqref{eq:sigma-computation}), $\sigma^2(\Phi_1)={2}$, the exponent in \eqref{eq:V-beta-effective} converges (as a process and jointly with the terminal positions) to
	\begin{equation}\label{f23}
		\sum_{1\le i<j\le k}
		\tonde{
		{\frac1{\sqrt2}}B^{ij}_{L_t^{W^i-W^j}}
		+
		\frac14L_t^{W^i-W^j}
		}\fstop
	\end{equation}
	Moreover,
	\begin{equation}
		\tonde{N^{-1/2}\tttonde{R_{tN}^{1,\beta_N}-tN\sinh(\beta_N)},\ldots,
		N^{-1/2}\tttonde{R_{tN}^{k,\beta_N}-tN\sinh(\beta_N)}}_{t\ge 0}
		\xLongrightarrow[N\to\infty]{}
		\mathbf W\fstop
	\end{equation}
	The exponential moments are uniformly integrable by Lemma \ref{vbound} and the finite-support structure of the functions above. Therefore, the limit of \eqref{eq:k-moment-representation-start} is
	\begin{equation}
	\begin{aligned}
		\mathbf E_{\mathbf z}^{{\rm BM}^{\otimes k}}
		\bigg[
			\mathbf E\bigg[
			\exp\bigg\{
			{\frac1{\sqrt2}}\sum_{1\le i<j\le k}B^{ij}_{L_t^{W^i-W^j}}
			\bigg\}
			\,\bigg|\,
			\mathbf W
			\bigg]
			\exp\bigg\{{\frac14}\sum_{1\le i<j\le k}L_t^{W^i-W^j}\bigg\}
			\prod_{j=1}^k\phi(W_t^j)
		\bigg]\fstop
	\end{aligned}
	\end{equation}
	Conditionally on $\mathbf W$, the Brownian motions $B^{ij}$ are independent. Hence,
	\begin{equation}
		\mathbf E\bigg[
		\exp\bigg\{
		{\frac1{\sqrt2}}\sum_{1\le i<j\le k}B^{ij}_{L_t^{W^i-W^j}}
		\bigg\}
		\,\bigg|\,
		\mathbf W
		\bigg]
		=
		\exp\bigg\{{\frac14}\sum_{1\le i<j\le k}L_t^{W^i-W^j}\bigg\}\fstop
	\end{equation}
	Combining the two local-time contributions gives the coefficient {$1/2$} in \eqref{eq:k-moment-asymptotics}, and the proof is complete.
\end{proof}

\section{Convergence of finite-dimensional distributions}\label{sec:conv-fdd}
This section is devoted to the proof of Proposition \ref{pr:conv-fdd} on convergence of finite-dimensional distributions of the tilted fields. The first key ingredient is the asymptotics of all mixed moments from Proposition \ref{lim_moments}.
The second key input is a recent moment-based characterization \cite[Proposition 1.1]{parekh_moment_2025} of mild/It\^o--Walsh solutions to the multiplicative SHE with general noise coefficient $\gamma>0$, see \eqref{eq:mSHE}. This criterion was strongly inspired by flow-based ideas developed earlier by \cite{tsai2024stochastic} for the $(2+1)$-dimensional SHE. In words, it says that the finite-dimensional laws of the It\^o--Walsh propagator of \eqref{eq:mSHE-theorem} are uniquely determined by independence on disjoint time intervals, the propagator composition rule, and the joint moment formulas.

For later convenience, we report this result below. Here and throughout, if $E\subset \R^d$, $d\ge 1$, is open, we write
$\cC(E)$ for the space of continuous functions on $E$, $\cC_c(E)$ for its
subspace of compactly supported functions, and $\mathscr M(E)$ for the space
of Radon measures on $E$, endowed with the vague topology. We also write
$\cC_c^\infty(E)$ for the space of smooth compactly supported functions on
$E$. Finally,
 $\R_\uparrow^4\eqdef \{(s,t,x,y)\in \R^4: s<t\}$.

\begin{proposition}[{\cite[Proposition 1.1]{parekh_moment_2025}}]\label{pr:mSHE-characterization}
Let $(\cZ_{s,t})_{s\le t}$ be a family of $\mathscr M(\R^2)$-valued random variables indexed by $-\infty <s\le t<\infty$ with $s,t \in \mathbb Q$, all defined on some complete probability space $(\Omega,\cF,\P)$. Fix $\gamma >0$,  and suppose that the following conditions hold true:
\begin{enumerate}[(1)]
    \item\label{it:SHE1} $\cZ_{s_j,t_j}$ are independent under $\P$ whenever $(s_j,t_j)$ are disjoint intervals.
    \item\label{it:SHE2} For all functions $f\in \cC_c^\infty(\R^2)$, $\psi \in \cC_c^\infty(\R)$ with $\int_\R\psi=1$, and indices $s<t<u$, 
    \begin{equation}
    \begin{aligned}
    &\int_{\R^4} \eps^{-1}\psi(\eps^{-1}(y_1-y_2))\, f(x,z)\, \cZ_{t,u}(\dd y_1,\dd z)\cZ_{s,t}(\dd x,\dd y_2)\\
    &\qquad\xrightarrow[\eps\to 0]{\P} \int_{\R^2} f(x,z)\,\cZ_{s,u}(\dd x,\dd z)\comma
    \end{aligned}
    \end{equation}
    where the above convergence holds in $\P$-probability.
    \item\label{it:SHE3} For all integers $k\ge 1$, functions $\phi,\psi \in \cC_c^\infty(\R)$, and indices $s<t$,
    \begin{equation}
    \begin{aligned}
&\E\bigg[\bigg(\int_{\R^2}\phi(x)\psi(y)\,\cZ_{s,t}(\dd x,\dd y)\bigg)^k\bigg]\\
&= \int_{\R^k} \prod_{j=1}^k \phi(x_j)\mathbf E_{(x_1,\ldots,x_k)}^{{\rm BM}^{\otimes k}}\bigg[\exp\bigg\{\gamma^2 \sum_{1\le i<j\le k} L_{t-s}^{W^i-W^j}\bigg\}\prod_{j=1}^k \psi(W_{t-s}^j)\bigg]\dd x_1\cdots \dd x_k\comma
\end{aligned}
    \end{equation}
    where, as before (see, e.g., the text above Proposition \ref{pr:second-moment}), the expectation is with respect to a $k$-dimensional standard Brownian motion $\mathbf W=(W^1,\ldots,W^k)$ started at $\x\in \R^k$, while $L_t$ stands for the local time at the origin in the time interval $[0,t]$ for the process in the superscript.
\end{enumerate}

Then, there exists a $\cC(\R_\uparrow^4)$-valued random element $\tilde \cZ=(\tilde \cZ_{s,t}(x,z))_{(s,t,x,z)\in \R_\uparrow^4}$, defined on the same probability space and satisfying, for every $\psi\in \cC_c(\R)$, $\phi\in \cC_b(\R)$, and $s<t$ in $\Q$, 
\begin{equation}
 \cZ_{s,t}(\psi\otimes \phi)= \int_{\R^2}\psi(x)\phi(z)\,\tilde \cZ_{s,t}(x,z)\,\dd x\dd z\comma\qquad \P\text{-a.s.}\fstop
\end{equation}
Furthermore, on the same probability space, there exists a space-time white noise $\xi$ on $\R^2$, with the property that $(\tilde \cZ_{s,t})_{s\le t}$ is the \textit{propagator} of the multiplicative ${\rm SHE}$ in \eqref{eq:mSHE}, driven by $\xi$ and with noise coefficient $\gamma>0$. More precisely, for all $s,x \in \R$, 
\begin{equation}
(t,y)\in (s,\infty)\times \R\longmapsto \tilde \cZ_{s,t}(x,y)
\end{equation}
is, $\P$-a.s., the It\^o--Walsh solution to the equation in \eqref{eq:mSHE} started from $s$ with initial datum $\delta_x$. Further,
$t\longmapsto \tilde \cZ_{s,t}(x,\emparg)$ may be realized as a process in $\cC([s,\infty),\cM_f(\R))$.

Consequently, for any finite collection of indices $s_1\le t_1, \ldots, s_m\le t_k$ in $\Q$, the finite-dimensional laws $(\cZ_{s_1,t_1},\ldots, \cZ_{s_m,t_m})$ are uniquely determined by the conditions in \ref{it:SHE1}--\ref{it:SHE3}.
\end{proposition}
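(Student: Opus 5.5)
Since this is the cited characterization \cite[Proposition 1.1]{parekh_moment_2025}, I would follow the flow-based strategy of \cite{tsai2024stochastic} rather than try moment determinacy. Condition \ref{it:SHE3} alone cannot pin down the law: the $k$-th moments grow like $e^{ck^3}$, so Carleman's criterion fails. Instead, I would \emph{construct} a white noise $\xi$ out of the given family $\cZ$ itself, and then show that $\cZ$ coincides with the It\^o--Walsh propagator driven by that $\xi$. Uniqueness of finite-dimensional laws then follows, because the It\^o--Walsh propagator is a measurable functional of $\xi$.

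\textbf{Step 1 (regularity).} From \ref{it:SHE3} with $k=2,4,\dots$, together with exponential-moment bounds for Brownian local times, I would derive uniform $L^p$ bounds on $\cZ_{s,t}(\phi\otimes\psi)$. Applying these to mollified test functions and using Kolmogorov's criterion produces the continuous density $\tilde\cZ_{s,t}(x,z)$.

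\textbf{Step 2 (short-time linearization).} For a short interval of length $h$, expanding the exponential in \ref{it:SHE3} to first order in the local time gives
\[
\E\big[\cZ_{t,t+h}(\phi\otimes\psi)\big]=\langle\phi,\mathsf p_h\psi\rangle,
\qquad
\Var\big(\cZ_{t,t+h}(\phi\otimes\psi)\big)=\gamma^2\!\int_t^{t+h}\!\!\int_\R \big(\text{heat-kernel products}\big)+O(h^{3/2}).
\]
Here $\mathsf p_h$ is the heat kernel. This identifies the centered increment $\cZ_{t,t+h}-\mathsf p_h$ as ``$\gamma\times$ heat kernel $\times$ noise on $[t,t+h]$'' to leading order. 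I would then define
\[
\xi_n(\Phi)\eqdef\gamma^{-1}\sum_i \big(\cZ_{t_i,t_{i+1}}-\mathsf p_{t_{i+1}-t_i}\big)(\text{suitable test functions})
\]
over a dyadic partition of mesh $2^{-n}$. Independence \ref{it:SHE1} makes the summands independent, so a Lindeberg CLT combined with an $L^2$-Cauchy argument (again using only moments of order at most $4$ from \ref{it:SHE3}) yields an $L^2(\P)$ limit $\xi$. This limit is a Gaussian space-time white noise on the same probability space, with the correct covariance.

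\textbf{Step 3 (identification).} Iterating the composition rule \ref{it:SHE2} writes $\cZ_{s,t}$ as the product $\cZ_{t_{m-1},t}\circ\cdots\circ\cZ_{s,t_1}$. Telescoping each factor as $\mathsf p+(\cZ-\mathsf p)$ shows that $\cZ_{s,t}$ minus the discretized mild (Walsh) equation driven by $\xi$ is a sum of martingale-like error terms. I would bound its second moment by mixed moments of independent short propagators, each computable from \ref{it:SHE3}, and show that it is $O(2^{-n\kappa})$ for some $\kappa>0$. Passing to the limit, $\tilde\cZ_{s,\cdot}(x,\cdot)$ solves the mild equation driven by $\xi$, and uniqueness of It\^o--Walsh solutions closes the argument.

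\textbf{Main obstacle.} I expect Step 3 to be the hardest: controlling the cumulative error of the short-time linearization over $2^n$ compositions with only moment information. The difficulty is that the errors are not independent, since each factor is composed with the random past. I would first try to organize the error as a discrete martingale in the time index, so that cross terms vanish by independence. This reduces everything to a sum of $2^n$ second-moment terms, each of order $h^{1+\kappa}$ by the local-time expansion of \ref{it:SHE3} at $k\le4$.
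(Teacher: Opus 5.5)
The paper does not prove this proposition. It is imported from the source, which the authors describe as adapting Tsai's flow-based characterization of the two-dimensional critical flow, a setting with no driving noise to reconstruct. So there is no in-paper argument to compare with, and your outline has to stand on its own.

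Your route is a genuinely one-dimensional strong-solution argument: build $\xi$ from centered short-time increments, identify $\tilde\cZ$ with the Walsh propagator, then read off uniqueness of laws from measurability in $\xi$. It would give the ``same probability space'' conclusion directly, and Steps 1--2 are sound in outline.

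Two smaller remarks on Step 3. First, the obstacle you single out disappears with the exact Duhamel identity (here $\cZ^*_{s,u}\phi(y)=\int\phi(x)\tilde\cZ_{s,u}(x,y)\,\dd x$):
\[
\cZ_{s,t}(\phi\otimes\psi)-\langle\phi,\mathsf p_{t-s}\psi\rangle=\sum_i\big(\cZ_{t_i,t_{i+1}}-\mathsf p_{t_{i+1}-t_i}\big)\big(\cZ^*_{s,t_i}\phi\otimes\mathsf p_{t-t_{i+1}}\psi\big).
\]
Its summands are martingale differences, so no expansion into products is needed. Second, condition (1) only covers pairwise disjoint intervals. To know that $\xi$ on $[r,\infty)$ is independent of $\sigma(\cZ_{u,v}:v\le r)$, you must first use (2) to express every such $\cZ_{u,v}$ through the pieces of a common partition.

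The genuine gap is your claim that the needed mixed moments are ``computable from (3)''. As stated, (3) gives only $\E[\cZ_{s,t}(\phi\otimes\psi)^k]$ for a single product pair. Polarizing in $\phi$ and in $\psi$ recovers $\E[\prod_j\cZ_{s,t}(\phi_j\otimes\psi_j)]$ only after symmetrizing over re-pairings of the $\psi_j$ with the $\phi_j$. For $k=2$ you learn only
\[
\E\big[\cZ(\phi_1\otimes\psi_1)\cZ(\phi_2\otimes\psi_2)\big]+\E\big[\cZ(\phi_1\otimes\psi_2)\cZ(\phi_2\otimes\psi_1)\big],
\]
so a mixed moment is determined only when the two pairs share a slot.

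Steps 1--2 can be arranged so that every mixed moment shares a slot. For example, build $\xi$ from $(\cZ_{t_i,t_{i+1}}-\mathsf p)(\Phi(t_i,\cdot)\otimes\chi)$ with $\chi$ fixed; composition chains always share a start or an end function. Step 3 cannot be arranged this way. Pairing a Duhamel summand with your noise produces
\[
\E\big[(\cZ_{u,v}-\mathsf p_{v-u})(F\otimes G)\,(\cZ_{u,v}-\mathsf p_{v-u})(FG\otimes\chi)\,\big|\,F\big],\qquad F=\cZ^*_{s,u}\phi,\quad G=\mathsf p_{t-v}\psi,
\]
which has different functions in both slots. Using another factorization of $FG$ runs into the same problem.

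The value you expect at leading order is $\gamma^2(v-u)\int F^2G^2$. But that is precisely the statement that the martingale part of $r\mapsto\cZ^*_{s,r}\phi$ is $\gamma\,\cZ^*_{s,r}\phi\,\xi$ for your $\xi$, which is what Step 3 is supposed to prove. So it cannot come out of a local-time expansion of (3).

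To close the gap you need one of two things:
\begin{itemize}
\item the fully mixed form of (3), which is available in the application of Section~5, since Proposition~\ref{lim_moments} allows distinct starting points and distinct test functions; or
\item an additional argument that recovers the unsymmetrized short-time cross-covariance from the symmetrized one, for instance by exploiting positivity of $\cZ$ and positive semidefiniteness of the limiting covariance.
\end{itemize}
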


In the next proposition, $\cY_{\lambda,N}$ are the tilted fields defined in \eqref{eq:tilted-field}, while $\cY$ stands for the It\^o--Walsh solution to the multiplicative SHE in \eqref{eq:mSHE-theorem}, that is, with noise coefficient $\gamma=1/\sqrt 2$ and $\cY_0=\delta_0$. By Proposition \ref{pr:mSHE-characterization}, $\cY$ may be realized in $\cC([0,\infty);\cM_f(\R))$. 
\begin{proposition}\label{pr:conv-fdd}  Fix $\lambda = 7/8$. 	Then,   $\cY_{\lambda,N}\Longrightarrow \cY$ as $N\to \infty$, 
	in the sense of finite-dimensional distributions. More specifically, we have, for all integers $k\ge 1$, times $0\le t_1<\ldots<t_k$, and test functions $\phi_1,\ldots,\phi_k \in \cC_b(\R)$,
	\begin{equation}
		(\cY_{\lambda,N}(t_1,\phi_1),\ldots,\cY_{\lambda,N}(t_k,\phi_k))\xLongrightarrow[N\to \infty]{} (\cY(t_1,\phi_1),\ldots,\cY(t_k,\phi_k))\fstop
	\end{equation}
\end{proposition}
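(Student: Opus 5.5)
We will prove Proposition \ref{pr:conv-fdd} by embedding the tilted fields into a family of two-parameter random measures (propagators) and checking that every subsequential limit satisfies \ref{it:SHE1}--\ref{it:SHE3} of Proposition \ref{pr:mSHE-characterization} with $\gamma=1/\sqrt2$.

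\textbf{Construction of the propagators.} For rational $s<t$, let $\cZ^N_{s,t}\in\mathscr M(\R^2)$ be the measure assigning to $(N^{-1/2}(x-sN^\lambda),N^{-1/2}(y-tN^\lambda))$ the mass
\[
N^{-1/2}\exp\{\beta_N(y-x)-(t-s)N(\cosh\beta_N-1)\}\,K_{sN,tN}(x,y),
\]
for $x,y\in\Z$. Here the factor $N^{-1/2}$ turns the sum over $x$ into a Riemann sum. By Proposition \ref{cor:moments}, applied with the time-shifted kernel, and Proposition \ref{lim_moments}, the $k$-th moments of $\cZ^N_{s,t}(\phi\otimes\psi)$ are Riemann sums in $\x^N$ of quantities converging locally uniformly to the right-hand side of \ref{it:SHE3}. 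Uniform integrability in $\x^N$, together with Gaussian decay in $|\x-\y|$ coming from Lemma \ref{vbound} and the heat kernel bounds, lets us pass to the limit. Since these moments grow like $e^{Ck^3}$, which is too fast for moment determinacy, we do not try to identify the limit law directly. Instead we use the moments only for two purposes: to get tightness in the vague topology, from the first moment and positivity, and to identify \ref{it:SHE3} for subsequential limits, using uniform integrability via a $(k+1)$-st moment bound.

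\textbf{Independence and composition.} Take a joint subsequential limit $(\cZ_{s,t})_{s<t\in\Q}$. Property \ref{it:SHE1} passes to the limit, because $\cZ^N_{s,t}$ depends only on Poisson clocks in $[sN,tN]$, so prelimit fields over disjoint intervals are exactly independent. For \ref{it:SHE2}, the prelimit satisfies Chapman--Kolmogorov exactly: $K_{sN,uN}=K_{sN,tN}K_{tN,uN}$, and the tilts compose multiplicatively. We therefore need to show that replacing the exact lattice gluing at level $t$ by the mollified gluing $\eps^{-1}\psi(\eps^{-1}(y_1-y_2))$ costs an error that is small in $L^2$, uniformly in $N$, as $\eps\to0$. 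We will compute the second moment of the difference through the $k=2$ and $k=4$ point motions, in the mixed form where pairs of particles are glued at time $tN$. By Proposition \ref{lim_moments} and the Markov property at $tN$, the resulting limiting expressions are Brownian expectations with local-time weights. For these, the $\eps$-mollified versus exact gluing difference vanishes as $\eps\to0$ by continuity of the Brownian transition densities weighted by $\exp\{\frac12\sum L\}$.

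\textbf{Main obstacle.} The hard step is this uniform-in-$N$ control for \ref{it:SHE2}. It requires moment convergence with initial points $\x^N$ that are not fixed but integrated against test functions and glued at an intermediate time. In particular, we need the convergence in Proposition \ref{lim_moments} to hold uniformly over compacts in $\mathbf z$, including near coinciding coordinates, together with a dominating bound. We would get the dominating bound from Lemma \ref{lem:HKUB} combined with \eqref{eq:additive-functional-exp} via Hölder's inequality.

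\textbf{Conclusion.} Proposition \ref{pr:mSHE-characterization} then identifies all finite-dimensional laws of the limit as those of the SHE propagator. Finally, $\cY_{\lambda,N}(t,\phi)$ is recovered as $\cZ^N_{0,t}$ started from the Dirac mass at $0$. We pass from integrated initial data to $\delta_0$ using \ref{it:SHE2} on a short interval $[0,\eps]$ and $L^2$ continuity as $\eps\to0$, both bounded via the second moment formula. We extend from compactly supported $\phi$ to $\phi\in\cC_b$ by tail control from the first moment.
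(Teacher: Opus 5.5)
Your proposal is essentially the paper's proof, and it is correct. The shared steps are:
\begin{itemize}
\item the same centered tilted propagators $\cZ^N_{s,t}$;
\item vague tightness, and Item (3) from Proposition~\ref{lim_moments}, with a $(k+1)$-st moment giving uniform integrability;
\item exact independence over disjoint Poisson clocks;
\item an $L^2$ comparison of mollified versus exact lattice gluing;
\item identification via Proposition~\ref{pr:mSHE-characterization};
\item a second-moment approximation of the Dirac initial datum.
\end{itemize}

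For the gluing comparison, two-point moments on $[s,t]$ and $[t,u]$, combined through independence, already suffice; $k=4$ point motions are not needed. For the Dirac datum, the paper mollifies the starting point at scale $\delta$ rather than composing over $[0,\varepsilon]$, but the mechanism is the same.

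One bookkeeping point: the times $t_1,\ldots,t_k$ need not be rational. As the paper does, you should run the diagonal extraction and identification over a countable index set such as $\Q_{\ge0}\cup\{t_1,\ldots,t_k\}$.
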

\begin{proof}
Fix $m\ge1$, $0\le t_1<\cdots<t_m$, and
$\phi_1,\ldots,\phi_m\in\cC_b(\R)$. We prove the convergence of this
finite-dimensional marginal. Set
$
	\beta_N\eqdef \beta_{7/8,N}$, and recall that
$
	N\sinh(\beta_N)=N^{7/8}$. 
Finally, set
$
	D\eqdef \Q_{\ge 0}\cup\{t_1,\ldots,t_m\}$.
We divide the proof into steps.

\smallskip\noindent
\textit{Step 1. Tightness in $\mathscr M(\R^2)^{\{ (s,t)\in \Q^2: s\le t\}}$.}
For $0\le s\le t$, and
\begin{equation}
	x\in \Z-sN^{7/8}\comma\qquad
	y\in \Z-tN^{7/8}\comma
\end{equation}
define
\begin{equation}\label{eq:def-Z-st-N}
	Z_{s,t}^N(x,y)
	\eqdef
	K_{sN,tN}(x+sN^{7/8},y+tN^{7/8})\,
	e^{\beta_N\tttonde{y-x+(t-s)N^{7/8}}
		-
		N(t-s)\tttonde{\cosh(\beta_N)-1}
	}\fstop
\end{equation}
This is the centered two-parameter version of the tilted kernel. The exponential
weights are multiplicative, and the kernels $K_{s,t}$ compose. Therefore, for all
$0\le s<t<u$,
\begin{equation}\label{propa}
	\sum_{y\in \Z-tN^{7/8}}
	Z_{s,t}^N(x,y)Z_{t,u}^N(y,z)
	=
	Z_{s,u}^N(x,z)\fstop
\end{equation}

Define $\cZ_{s,t}^N\in \mathscr M(\R^2)$ by
\begin{equation}\label{eq:def-cZ-st-N}
	\cZ_{s,t}^N
	\eqdef
	N^{-1/2}
	\sum_{\substack{x\in \Z-sN^{7/8}\\ y\in \Z-tN^{7/8}}}
	Z_{s,t}^N(x,y)\,
	\delta_{N^{-1/2}x}\otimes \delta_{N^{-1/2}y}\fstop
\end{equation}

For each fixed $0\le s\le t$, the family $(\cZ_{s,t}^N)_{N\ge1}$ is tight in
$\mathscr M(\R^2)$ with the vague topology. Indeed, for every
$f\in\cC_c(\R^2)$ and every integer $k\ge1$, Proposition \ref{lim_moments},
applied to the time interval $[s,t]$, gives
\begin{equation}
	\sup_{N\ge1}
	\E[|\cZ_{s,t}^N(f)|^k]
	<\infty\fstop
\end{equation}
This implies tightness of the real-valued random variables $\cZ_{s,t}^N(f)$ for
every $f\in\cC_c(\R^2)$, and hence tightness of the random measures.

Consequently, for every finite collection of intervals
\begin{equation}
	(s_1,t_1'),\ldots,(s_\ell,t_\ell')\comma\qquad
	0\le s_i\le t_i'\comma\quad s_i,t_i'\in D\comma
\end{equation}
the laws of
$(\cZ_{s_1,t_1'}^N,\ldots,\cZ_{s_\ell,t_\ell'}^N)$
are tight in $\mathscr M(\R^2)^\ell$. Since $D$ is countable, a diagonal
extraction shows that every subsequence has a further subsequence along which
all finite-dimensional marginals of
$(\cZ_{s,t}^N)_{s,t\in D,\, s\le t}$
converge. Let
$	\tttonde{\cZ_{s,t}}_{s,t\in D,\, s\le t}
$ be such a subsequential limit. We prove in Steps 2--4 that every such limit
satisfies the three conditions of Proposition \ref{pr:mSHE-characterization}, with
$\gamma=1/\sqrt2$, restricted to times in $D$. This identifies the subsequential
limit uniquely on the enlarged time set $D$, and in particular at the originally
fixed times $t_1,\ldots,t_m$.

\smallskip\noindent
\textit{Step 2. Verification of Item (1) in Proposition \ref{pr:mSHE-characterization}.}
Let $(s_1,t_1'),\ldots,(s_\ell,t_\ell')$ be disjoint intervals with endpoints in $D$.
Then the prelimit random measures
\begin{equation}
	\cZ_{s_1,t_1'}^N,\ldots,\cZ_{s_\ell,t_\ell'}^N
\end{equation}
depend on disjoint increments of the Poisson environment. Hence, they are
independent. Passing to the subsequential limit preserves independence, and Item
\ref{it:SHE1} follows.

\smallskip\noindent
\textit{Step 3. Verification of Item (2) in Proposition \ref{pr:mSHE-characterization}.}
Fix $s<t<u$ with $s,t,u\in D$, $f\in\cC_c^\infty(\R^2)$, and
$\psi\in\cC_c^\infty(\R)$ with $\int_\R\psi=1$. Write
\begin{equation}
	\psi_\varepsilon(r)
	\eqdef
	\varepsilon^{-1}\psi(\varepsilon^{-1}r)\fstop
\end{equation}
We prove that
\begin{equation}\label{eq:verification-item-2}
\int_{\R^4}
	\psi_\varepsilon(y-w)
	f(x,z)\,
	\cZ_{t,u}(\dd y,\dd z)\cZ_{s,t}(\dd x,\dd w)
	\xrightarrow[\varepsilon\to0]{\P}
	\int_{\R^2}f(x,z)\,\cZ_{s,u}(\dd x,\dd z)\fstop
\end{equation}
It is enough to prove the corresponding convergence along the prelimit in $L^2$,
after taking $N\to\infty$ and then $\varepsilon\to0$.

By \eqref{propa} and \eqref{eq:def-cZ-st-N},
\begin{equation}\label{eq:discrete-propagator-measure}
\begin{aligned}
	\int_{\R^2}f(x,z)\,\cZ_{s,u}^N(\dd x,\dd z)
	&=
	\int_{\R^4}
	N^{1/2}\car_{\{y=w\}}
	f(x,z)\,
	\cZ_{t,u}^N(\dd y,\dd z)\cZ_{s,t}^N(\dd x,\dd w)\fstop
\end{aligned}
\end{equation}
Therefore, the $L^2$ error is
\begin{equation}\label{eq:L2-error-propagator}
	\E\bigg[
	\bigg(
	\int_{\R^4}
	\sqbra{
		\psi_\varepsilon(y-w)
		-
		N^{1/2}\car_{\{y=w\}}
	}
	f(x,z)\,
	\cZ_{t,u}^N(\dd y,\dd z)\cZ_{s,t}^N(\dd x,\dd w)
	\bigg)^2
	\bigg]\fstop
\end{equation}
Expanding the square, \eqref{eq:L2-error-propagator} equals
\begin{equation}\label{eq:L2-error-expanded}
	\E\bigg[
	\int_{\R^8}
	\prod_{j=1}^2 f(x_j,z_j)
	\prod_{j=1}^2
	\sqbra{
		\psi_\varepsilon(y_j-w_j)
		-
		N^{1/2}\car_{\{y_j=w_j\}}
	}
	\prod_{j=1}^2
	\cZ_{t,u}^N(\dd y_j,\dd z_j)
	\cZ_{s,t}^N(\dd x_j,\dd w_j)
	\bigg]\fstop
\end{equation}
We expand the product in square brackets into four terms. For each of these four
terms, Proposition \ref{lim_moments}, applied separately on $[s,t]$ and $[t,u]$,
together with independence of the environment on the two intervals, gives the
limit of the expectation as $N\to\infty$. The factors
\begin{equation}
	N^{1/2}\car_{\{y_j=w_j\}}
\end{equation}
converge, in the Riemann-sum sense, to the diagonal distribution
$\delta_0(y_j-w_j)$. Hence, the limit of \eqref{eq:L2-error-expanded} is
\begin{equation}\label{eq:L2-error-continuum}
	\E\bigg[
	\bigg(
	\int_{\R^4}
	\psi_\varepsilon(y-w)
	f(x,z)\,
	\cU_{t,u}(y,z)\cU_{s,t}(x,w)\,
	\dd y\dd z\dd x\dd w
	-
	\int_{\R^2}
	f(x,z)\,\cU_{s,u}(x,z)\,
	\dd x\dd z
	\bigg)^2
	\bigg]\comma
\end{equation}
where $\cU_{a,b}$ denotes the multiplicative SHE propagator with noise coefficient
$\gamma=1/\sqrt2$.

The continuum propagator satisfies
\begin{equation}
	\int_\R \cU_{s,t}(x,y)\cU_{t,u}(y,z)\,\dd y
	=
	\cU_{s,u}(x,z)\comma
\end{equation}
and the mollifier in \eqref{eq:L2-error-continuum} approximates the diagonal
$y=w$. Therefore, \eqref{eq:L2-error-continuum} vanishes as
$\varepsilon\to0$. This proves \eqref{eq:verification-item-2}, and hence Item
\ref{it:SHE2}.

\smallskip\noindent
\textit{Step 4. Verification of Item (3) in Proposition \ref{pr:mSHE-characterization}.}
Let $k\ge1$, $\phi,\psi\in\cC_c^\infty(\R)$, and $s<t$ with $s,t\in D$. For
$k=1$, the claim follows from the tilted local central limit theorem. For $k\ge2$,
Proposition \ref{lim_moments}, applied to the interval $[s,t]$, gives
\begin{equation}\label{eq:item-3-prelimit}
\begin{aligned}
	&\E\bigg[
	\bigg(
	\int_{\R^2}\phi(x)\psi(y)\,
	\cZ_{s,t}^N(\dd x,\dd y)
	\bigg)^k
	\bigg]
	\\
	&\quad
	\xrightarrow[N\to\infty]{}
	\int_{\R^k}
	\prod_{j=1}^k\phi(x_j)\,
	\mathbf E_{\x}^{{\rm BM}^{\otimes k}}
	\bigg[
		\exp\set{
			\frac12\sum_{1\le i<j\le k}L_{t-s}^{W^i-W^j}
		}
		\prod_{j=1}^k\psi(W_{t-s}^j)
	\bigg]\dd \x\fstop
\end{aligned}
\end{equation}
Since $\gamma=1/\sqrt2$, the exponent in \eqref{eq:item-3-prelimit} is precisely
\begin{equation}
	\gamma^2\sum_{1\le i<j\le k}L_{t-s}^{W^i-W^j}\fstop
\end{equation}
This is Item \ref{it:SHE3}. The required uniform integrability follows from the
same moment estimate, with $k+1$ in place of $k$.

Thus every subsequential limit of
$
	\tttonde{\cZ_{s,t}^N}_{s,t\in D,\, s\le t}
$
satisfies Items \ref{it:SHE1}--\ref{it:SHE3}. By Proposition
\ref{pr:mSHE-characterization}, its law is uniquely identified as the law of the
multiplicative SHE propagator with noise coefficient $\gamma=1/\sqrt2$, restricted
to endpoints in $D$. Hence, since every subsequential limit is uniquely identified,
the full family
$
	\tttonde{\cZ_{s,t}^N}_{s,t\in D,\ s\le t}
$
converges in finite-dimensional distributions to this propagator.

\smallskip\noindent
\textit{Step 5. The point-to-line result.}
It remains to recover the field started from a point mass at the origin. For $t>0$
and $\phi\in\cC_b(\R)$, the definition \eqref{eq:def-Z-st-N} gives
\begin{equation}\label{eq:Y-as-point-start}
	\cY_{7/8,N}(t,\phi)
	=
	\sum_{y\in\Z-tN^{7/8}}
	Z_{0,t}^N(0,y)\,
	\phi(N^{-1/2}y)\fstop
\end{equation}
Equivalently,
\begin{equation}\label{eq:Y-as-discrete-delta}
	\cY_{7/8,N}(t,\phi)
	=
	\int_{\R^2}
	N^{1/2}\car_{\{0\}}(x)\,\phi(y)\,
	\cZ_{0,t}^N(\dd x,\dd y)\comma
\end{equation}
where $N^{1/2}\car_{\{0\}}$ is understood on the support $N^{-1/2}\Z$ of the
first coordinate.

Let $\varrho\in\cC_c^\infty(\R)$ satisfy $\int_\R\varrho=1$, and set
\begin{equation}
	\varrho_\delta^0(x)
	\eqdef
	\delta^{-1}\varrho(\delta^{-1}x)\fstop
\end{equation}
For fixed $\delta>0$, the convergence of the propagators proved in Steps 1--4
applies at every time in $D$, hence in particular at $t_1,\ldots,t_m$. Therefore,
for each $t\in D$,
\begin{equation}\label{eq:smooth-start-conv}
	\int_{\R^2}\varrho_\delta^0(x)\phi(y)\,
	\cZ_{0,t}^N(\dd x,\dd y)
	\xLongrightarrow[N\to\infty]{}
	\int_{\R^2}\varrho_\delta^0(x)\phi(y)\,
	\cZ_{0,t}(\dd x,\dd y)\fstop
\end{equation}
Moreover, by the two-point moment asymptotics in Proposition \ref{lim_moments},
applied with the two initial points in the first coordinate and then letting the
mollifier collapse to the origin,
\begin{equation}\label{eq:delta-approx-prelimit}
	\lim_{\delta\to0}
	\limsup_{N\to\infty}
	\E\bigg[
	\bigg(
		\int_{\R^2}\varrho_\delta^0(x)\phi(y)\,
		\cZ_{0,t}^N(\dd x,\dd y)
		-
		\cY_{7/8,N}(t,\phi)
	\bigg)^2
	\bigg]
	=
	0\fstop
\end{equation}
On the continuum side, Proposition \ref{pr:mSHE-characterization} realizes the
limit propagator through a continuous density $\tilde\cZ_{0,t}$. Thus,
\begin{equation}\label{eq:delta-approx-continuum}
	\int_{\R^2}\varrho_\delta^0(x)\phi(y)\,
	\cZ_{0,t}(\dd x,\dd y)
	=
	\int_{\R^2}\varrho_\delta^0(x)\phi(y)\,
	\tilde\cZ_{0,t}(x,y)\,\dd x\dd y
	\xrightarrow[\delta\to0]{L^2}
	\int_\R \phi(y)\tilde\cZ_{0,t}(0,y)\,\dd y\fstop
\end{equation}
The last term is exactly $\cY(t,\phi)$.

Combining \eqref{eq:smooth-start-conv}, \eqref{eq:delta-approx-prelimit}, and
\eqref{eq:delta-approx-continuum}, first for fixed $\delta$ and then letting
$\delta\to0$, gives the joint convergence
\begin{equation}
	\tonde{
		\cY_{7/8,N}(t_1,\phi_1),\ldots,
		\cY_{7/8,N}(t_m,\phi_m)
	}
	\xLongrightarrow[N\to\infty]{}
	\tonde{
		\cY(t_1,\phi_1),\ldots,\cY(t_m,\phi_m)
	}
	\fstop
\end{equation}
If one of the times is $0$, then both the prelimit and the limit equal the
deterministic value $\phi_i(0)$ in the corresponding component. Since the times
$t_1,\ldots,t_m$ and test functions $\phi_1,\ldots,\phi_m$ were arbitrary, the
finite-dimensional convergence follows.
\end{proof}

\section{Tightness}\label{sec:tightness}

This section is devoted to the proof of the following result. 

\begin{proposition}\label{pr:tightness}The family $(\cY_{7/8,N})_{N\in \N}$ is tight in $\cD([0,\infty);\mathscr M_f(\R))$.
\end{proposition}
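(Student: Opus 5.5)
We reduce tightness of the measure-valued paths to tightness of real-valued projections plus compact containment. By the standard criterion for $\cD([0,\infty);\mathscr M_f(\R))$ (Jakubowski's criterion, or Ethier--Kurtz), it suffices to prove two things. First, a \emph{compact containment} condition: for each $T,\eps>0$ there is a compact $K\subset\mathscr M_f(\R)$ with $\P(\cY_{7/8,N}(t,\emparg)\in K\ \forall t\le T)\ge 1-\eps$. Second, tightness in $\cD([0,\infty);\R)$ of $\cY_{7/8,N}(\emparg,\phi)$ for $\phi$ in a countable, convergence-determining family of smooth compactly supported functions, together with $\phi\equiv1$.

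For compact containment we need uniform control of total mass and of tails. We write the tilted total mass $\cY_{7/8,N}(t,1)$ and a tail functional $\cY_{7/8,N}(t,\chi_R)$, with $\chi_R$ a smooth cutoff supported on $\{|z|\ge R\}$, through the martingale decomposition below. We bound their suprema over $[0,T]$ by Doob's inequality. The expectations are controlled by Proposition~\ref{lim_moments} (second moments) together with the first-moment tilted CLT. Gaussian tails of $\mathsf g$ then give $\sup_N\E[\sup_{t\le T}\cY_{7/8,N}(t,\chi_R)]\to0$ as $R\to\infty$.

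For each fixed $\phi$, we use the martingale decomposition announced in Section~\ref{sec:remarks}. Applying the generator of the averaging process to $\eta\mapsto\sum_x\vartheta_{\lambda,N}(t,x)\eta(x)\phi(N^{-1/2}(x-tN^\lambda))$, the time derivative of the tilt exactly cancels the $\cosh-1$ term. This yields
\[
\cY_{\lambda,N}(t,\phi)=\phi(0)+\int_0^t\cY_{\lambda,N}(s,\cA_N\phi)\,\dd s+\cM_{\lambda,N}(t,\phi),
\]
where $\cA_N\phi\to\tfrac12\phi''$ uniformly. The drift term is handled by the Kolmogorov--Chentsov criterion: its increments over $[s,t]$ are bounded by $|t-s|\sup_{r\le T}\cY_{\lambda,N}(r,|\cA_N\phi|)$, which is tight by the mass bound. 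For the martingale part we use Aldous' stopping-time criterion. Its predictable quadratic variation is the expression in \eqref{eq:pred-QV-1}, a sum of squared gradients weighted by $N^{2\lambda-1}$. Since the jumps are at most $C N^{-1/2}\sup|\phi|$ times local masses, the jumps vanish in the limit.

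To avoid estimating moments of the carré-du-champ, we exploit the finite-dimensional convergence of Proposition~\ref{pr:conv-fdd} together with Aldous' criterion \cite{aldous_stopping_1989}. A sequence of martingales whose finite-dimensional distributions converge to those of a continuous martingale, and which is uniformly integrable, is tight in $\cD$ provided the limit is continuous. Concretely, we show that $\cM_{\lambda,N}(\emparg,\phi)$ converges in finite-dimensional distributions. This follows because $\cY_{\lambda,N}$ and the integrated drift converge jointly in finite-dimensional distributions, the drift being approximated by Riemann sums using Proposition~\ref{pr:conv-fdd} and uniform $L^2$ bounds. The limit $\cY(t,\phi)-\phi(0)-\int_0^t\cY(s,\tfrac12\phi'')\dd s$ is a continuous martingale, since $\cY$ has continuous paths by Proposition~\ref{pr:mSHE-characterization}. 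Uniform integrability comes from the uniform $L^2$ (indeed $L^k$) bounds of Proposition~\ref{lim_moments}.

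The main obstacle is verifying the exact hypotheses of Aldous' martingale criterion, in particular that the prelimit martingales are genuinely martingales in a filtration for which the limit identification is compatible. A careful check of the uniform integrability of $\sup_{t\le T}|\cM_{\lambda,N}(t,\phi)|$ is also needed. If that route fails, the fallback is to bound $\E[\langle\cM\rangle_{\tau+\delta}-\langle\cM\rangle_\tau]$ directly. This requires showing that the gradient-squared expectation is $O(N^{-3})$ after tilting, via a two-point motion computation analogous to Proposition~\ref{cor:moments} with an extra gradient.
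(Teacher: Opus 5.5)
Your proof works in outline but departs from the paper's in three places. \emph{Drift:} you bound the increments of $\int_0^t\cY_{7/8,N}(s,\mathcal A_N\phi)\,\dd s$ pathwise by $|t-s|\,\norm{\mathcal A_N\phi}_\infty\sup_{r\le T}\cY_{7/8,N}(r,1)$. The supremum is tight by Doob's inequality for the nonnegative mean-one martingale $\cY_{7/8,N}(\emparg,1)$. This is lighter than the paper, which controls the drift at stopping times through the time-integrated second-moment bound of Corollary~\ref{cor:QV-estimate}, obtained from the renewal equation.

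\emph{Martingale part:} you apply Aldous' martingale criterion to $\cM_{7/8,N}(\emparg,\phi)$ for every $\phi$. That forces you to prove finite-dimensional convergence of $\cM_{7/8,N}(\emparg,\phi)$, hence of the drift integrals jointly with $\cY_{7/8,N}$. The paper applies the criterion only to $\phi\equiv1$, where $\cM_{7/8,N}(\emparg,1)=\cY_{7/8,N}(\emparg,1)-1$, so finite-dimensional convergence is read off Proposition~\ref{pr:conv-fdd} directly. It then transfers to general $\phi$ inside Aldous' stopping-time criterion, via optional stopping and the domination $\varGamma_{7/8,N}(t,\phi)\le\norm{\phi}_\infty^2\,\varGamma_{7/8,N}(t,1)$ of the carré-du-champs (plus the bound on $\cS_{7/8,N}$). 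Your extra step is doable, but ``uniform $L^2$ bounds'' alone do not make the Riemann-sum approximation uniform in $N$. You need the two-time moments $\E[\cY_{7/8,N}(a,\psi)\cY_{7/8,N}(b,\psi)]$ to converge pointwise. This follows from Proposition~\ref{pr:conv-fdd} plus uniform integrability, e.g.\ from $\E[\cY_{7/8,N}(s,1)^4]\le\E[\cY_{7/8,N}(T,1)^4]$ by the submartingale property. Then the $L^2$ Riemann-sum error converges to its continuum analogue, which is small by $L^2$-continuity of $\cY(\emparg,\psi)$.

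\emph{Compact containment:} the paper bounds $\cY_{\lambda,N}(t,e^{\pm a\emparg})$ by $e^{C(a,T)}$ times the nonnegative mean-one martingale with shifted tilt $\beta_N\pm aN^{-1/2}$. Doob's maximal inequality then yields the compact set $\{\nu:\int e^{a|z|}\nu(\dd z)\le b\}$, using first moments only and valid for every $\lambda\le 1$. Your tail functionals $\cY_{7/8,N}(\emparg,\chi_R)$ are not martingales, so Doob applies only to their martingale part. You then need $\E[\cM_{7/8,N}(T,\chi_R)^2]$ small, uniformly in large $N$, as $R\to\infty$. Gaussian tails of $\mathsf g$ alone do not give this; you get it by combining uniformly small first moments with uniformly bounded third moments, e.g.\ $\E[X^2]\le\E[X]^{1/2}\E[X^3]^{1/2}$ for $X\ge0$.

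Overall, your route avoids the renewal-equation estimate but pays with extra moment bookkeeping in the finite-dimensional and compact-containment steps. The paper's route needs finite-dimensional convergence only for the total mass and no higher moments for compact containment.
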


We divide the proof of Proposition \ref{pr:tightness} into steps. 	In what follows, $\cC_b^\infty(\R)$ denotes the space of smooth bounded functions on $\R$ whose derivatives of all orders are bounded.

\subsection{Discrete SPDE and martingale decomposition} 
We first record the stochastic equations used throughout this section.
Let $(\omega_t(x))_{t\ge 0,\,x\in\Z}$ be the independent unit-rate Poisson
clocks driving the averaging process, where the clock $\omega(x)$ is attached to
the bond $\{x,x+1\}$. When $\omega(x)$ rings, the two values
$\eta(x)$ and $\eta(x+1)$ are both replaced by their average. For the process
observed on the time scale $tN$, set
\begin{equation}
	\omega_N(t,x)\eqdef \omega_{tN}(x)\comma
	\qquad
	\overline\omega_N(t,x)\eqdef \omega_N(t,x)-Nt\fstop
\end{equation}

Then, the averaging process satisfies the discrete SPDE \cite{sau_tiny_2024}
\begin{equation}\label{eq:avg-discrete-spde}
	\dd \eta_{tN}(x)
	=
	\frac N2\Delta\eta_{tN}(x)\,\dd t
	+
	\frac12\nabla^*
	\tttonde{
		\dd\overline\omega_N(t,\emparg)\,\nabla\eta_{tN^-}
	}(x)\comma
	\qquad x\in \Z\comma
\end{equation}
where
\begin{equation}
	\nabla f(x)\eqdef f(x+1)-f(x)\comma
	\qquad
	\nabla^* f(x)\eqdef f(x)-f(x-1)\comma
\end{equation}
\begin{equation}
	\Delta f(x)\eqdef\nabla^*\nabla f(x) = \nabla \nabla^* f(x)=f(x+1)+f(x-1)-2f(x)\fstop
\end{equation}
Applying $\nabla$ to \eqref{eq:avg-discrete-spde}, we obtain the gradient equation
\begin{equation}\label{eq:gradient-discrete-spde}
	\dd\nabla\eta_{tN}(x)
	=
	\frac N2\Delta\nabla\eta_{tN}(x)\,\dd t
	+
	\frac12\Delta
	\tttonde{
		\dd\overline\omega_N(t,\emparg)\,\nabla\eta_{tN^-}
	}(x)\fstop
\end{equation}
In mild form, this reads as
\begin{equation}\label{eq:gradient-mild}
	\nabla\eta_{tN}(x)
	=
	\nabla \pp_{tN}(x)
	+
	\int_0^t
	\sum_{y\in\Z}
	\frac12\Delta \pp_{(t-s)N}(x-y)\,
	\nabla\eta_{sN^-}(y)\,
	\dd\overline\omega_N(s,y)\fstop
\end{equation}

We now pass to the tilted fields. For $\phi\in \cC_b^\infty(\R)$, write
\begin{equation}
	\phi_{\lambda,N}(t,x)
	\eqdef
	\phi(N^{-1/2}(x-tN^\lambda))\comma
	\qquad t\ge 0\comma x\in\Z\fstop
\end{equation}
Thus,
\begin{equation}
	\cY_{\lambda,N}(t,\phi)
	=
	\sum_{x\in\Z}
	\vartheta_{\lambda,N}(t,x)\,
	\phi_{\lambda,N}(t,x)\,
	\eta_{tN}(x)\fstop
\end{equation}

We now collect a useful martingale decomposition.
\begin{lemma}\label{lem:martingale-decomposition}
	Fix $1/2<\lambda<1$ and $\phi\in\cC_b^\infty(\R)$. Then,
	for every $N\in\N$,
	\begin{equation}\label{eq:martingale-decomposition}
		\cY_{\lambda,N}(t,\phi)
		=
		\phi(0)
		+
		\int_0^t
		\cY_{\lambda,N}
		(s,\tfrac12\partial_z^2\phi)
		\dd s
		+
		\cM_{\lambda,N}(t,\phi)+ \cR_{\lambda,N}(t,\phi)\fstop
	\end{equation}
	Here, 
	$(\cM_{\lambda,N}(t,\phi))_{t\ge0}$ is a martingale (with respect to the natural filtration of $(\eta_{tN})_{t\ge 0}$) with
	predictable quadratic variation given by
	\begin{equation}\label{eq:pred-QV-2}
		\langle\cM_{\lambda,N}(\emparg,\phi)\rangle
		(t)
		=\frac{N^{2\lambda-1}}{4}\int_0^t \varGamma_{\lambda,N}(s,\phi)\,\dd s + \cS_{\lambda,N}(t,\phi)\comma
	\end{equation}
	where	
	\begin{equation}\label{eq:carre-du-champ}
		\varGamma_{\lambda,N}(t,\phi)\eqdef \sum_{x\in \Z} \vartheta_{\lambda,N}(t,x)^2\,[\nabla \eta_{sN}(x)]^2\, \phi_{\lambda,N}(t,x)^2\fstop
	\end{equation}
	The error terms $\cR_{\lambda,N}(t,\phi)$ and $\cS_{\lambda,N}(t,\phi)$ satisfy, for some $C=C(\phi)>0$, 
	\begin{equation}\label{eq:bound-R}
		|\cR_{\lambda,N}(t,\phi)|\le 
		 CN^{\lambda-1}\int_0^t \cY_{\lambda,N}(s,1)\,\dd s	\comma
	\end{equation}
	\begin{equation}\label{eq:bound-S}
		|\cS_{\lambda,N}(t,\phi)|\le C N^{\lambda-1/2}
		\int_0^t
	\varGamma_{\lambda,N}(s,1)
		\,\dd s\fstop
	\end{equation}
\end{lemma}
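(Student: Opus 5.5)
We apply Itô's formula for jump processes (equivalently, Dynkin's formula for the Markov process $(\eta_{tN})_{t\ge0}$) to the functional $F(t,\eta)=\sum_x \vartheta_{\lambda,N}(t,x)\phi_{\lambda,N}(t,x)\eta(x)$. This functional is linear in $\eta$ and has an explicit time dependence. It gives $\cY_{\lambda,N}(t,\phi)=\phi(0)+\int_0^t(\partial_s+N\cL)F(s,\eta_{sN})\,\dd s+\cM_{\lambda,N}(t,\phi)$, where $\cM_{\lambda,N}$ is the Dynkin martingale. Its jumps come from the compensated Poisson clocks in \eqref{eq:avg-discrete-spde}.

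\textbf{Drift.} Summing by parts against the drift $\frac N2\Delta\eta$, we get $\frac N2\sum_x \eta(x)\Delta(\vartheta\phi_{\lambda,N})(s,x)$. We write $\Delta(\vartheta\phi)(x)=\vartheta(x)\big[e^{\beta}\phi(x+1)+e^{-\beta}\phi(x-1)-2\phi(x)\big]$ and Taylor expand $\phi(x\pm1)=\phi\pm N^{-1/2}\phi'+\frac12N^{-1}\phi''\pm\frac16 N^{-3/2}\phi'''+\dots$. This yields $\vartheta\big[2(\cosh\beta-1)\phi+2\sinh\beta\,N^{-1/2}\phi'+\cosh\beta\,N^{-1}\phi''+O(\sinh\beta\,N^{-3/2}+N^{-2})\big]$.

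The time derivative of $\vartheta\phi_{\lambda,N}$ equals $\vartheta\big[-N(\cosh\beta-1)\phi-N^{\lambda-1/2}\phi'\big]$. The zeroth-order terms cancel exactly against $\frac N2\cdot2(\cosh\beta-1)$. The first-order terms cancel because $N\sinh\beta_{\lambda,N}=N^\lambda$. What remains is $\frac{\cosh\beta}{2}\phi''$, which is $\frac12\phi''$ up to an error $O(\beta^2)=O(N^{2\lambda-2})$, plus remainders of size $O(N^{\lambda-1})$ times $\|\phi\|_{\cC^3}$. Since $\eta\ge0$ and $\vartheta>0$, all these errors are bounded by $CN^{\lambda-1}\cY_{\lambda,N}(s,1)$, using $2\lambda-2<\lambda-1$. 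Collecting them into $\cR_{\lambda,N}$ gives \eqref{eq:bound-R}.

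\textbf{Quadratic variation.} When the clock on $\{x,x+1\}$ rings (rate $N$ on this time scale), the jump of $F$ is $\frac12\nabla\eta(x)\,\big[(\vartheta\phi)(x)-(\vartheta\phi)(x+1)\big]$. The predictable quadratic variation is therefore $\frac N4\int_0^t\sum_x[\nabla\eta_{sN}(x)]^2\big[\nabla(\vartheta\phi_{\lambda,N})(s,x)\big]^2\dd s$. We expand $\nabla(\vartheta\phi)(x)=\vartheta(x)\big[(e^\beta-1)\phi(x+1)+N^{-1/2}\phi'(\xi)\big]$. The leading part is $\beta^2\vartheta^2\phi^2$, with $N\beta^2=N^{2\lambda-1}(1+o(1))$, which produces the main term $\frac{N^{2\lambda-1}}4\int\varGamma_{\lambda,N}$.

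The remaining contributions are cross terms of order $N\cdot\beta N^{-1/2}=N^{\lambda-1/2}$, terms of order $N\cdot N^{-1}$, and the corrections from replacing $\phi(x+1)$ by $\phi(x)$ and $e^\beta-1$ by $\beta$. Each of these is bounded by $C N^{\lambda-1/2}\vartheta^2[\nabla\eta]^2$, since $\phi,\phi'$ are bounded and $\lambda>1/2$ makes $N^{\lambda-1/2}$ dominate $1$. Collecting them into $\cS_{\lambda,N}$ gives \eqref{eq:bound-S}.

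The only delicate point is bookkeeping. The zeroth- and first-order cancellations in the drift must be exact, which they are by the very choices of $\vartheta_{\lambda,N}$ and $\beta_{\lambda,N}$. Beyond that, all errors must be controlled by the nonnegative quantities $\cY_{\lambda,N}(s,1)$ and $\varGamma_{\lambda,N}(s,1)$. For the latter, both the positivity of $\eta$ and the uniformity of the Taylor remainders in $x$ (from $\phi\in\cC_b^\infty$) are used.
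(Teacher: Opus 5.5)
Your argument follows the same route as the paper's proof. You apply Dynkin/It\^o to the linear functional via \eqref{eq:avg-discrete-spde}, sum by parts, and cancel the zeroth- and first-order drift terms exactly using the form of $\vartheta_{\lambda,N}$ and $N\sinh\beta_{\lambda,N}=N^\lambda$. The jump computation then gives the bracket $\frac N4\int_0^t\sum_x[\nabla(\vartheta_{\lambda,N}\phi_{\lambda,N})(s,x)]^2[\nabla\eta_{sN}(x)]^2\,\dd s$. Your drift computation and the bound on $\cR_{\lambda,N}$ are correct.

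The gap is in your bookkeeping for the bracket. You assert that replacing $e^{\beta}-1$ by $\beta$, and $N\beta^2$ by $N^{2\lambda-1}$, each costs at most $CN^{\lambda-1/2}$; the latter comes with an unquantified $o(1)$. In fact $e^{\beta_{\lambda,N}}-1=N^{\lambda-1}+\tfrac12N^{2\lambda-2}+O(N^{4\lambda-4})$. Hence $N(e^{\beta_{\lambda,N}}-1)^2-N^{2\lambda-1}=N^{3\lambda-2}(1+o(1))$. This is much larger than $N^{\lambda-1/2}$ once $\lambda>3/4$; at $\lambda=7/8$ it is $N^{5/8}$ against $N^{3/8}$.

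This cannot be repaired inside the argument, because the statement itself is off. For $\phi\equiv1$ there are no cross terms, and exactly $\cS_{\lambda,N}(t,1)=\frac{N(e^{\beta_{\lambda,N}}-1)^2-N^{2\lambda-1}}{4}\int_0^t\varGamma_{\lambda,N}(s,1)\,\dd s$, where the integral is a.s.\ positive for $t>0$. So \eqref{eq:bound-S} is false as written for $\lambda\in(3/4,1)$. The paper's proof makes the same slip at ``Since $(e^{\beta_{\lambda,N}}-1)^2\sim N^{2\lambda-2}$''.

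There are two ways to fix it. One is to keep the exact prefactor $\frac N4(e^{\beta_{\lambda,N}}-1)^2$ in front of $\int_0^t\varGamma_{\lambda,N}(s,\phi)\,\dd s$. Then your remaining corrections are genuinely $O(N^{\lambda-1/2})$: the cross terms are of order $N^{\lambda-1/2}$, the swap $\phi(x+1)\to\phi(x)$ is of order $N^{2\lambda-3/2}$, and the $(\nabla\phi)^2$ term is $O(1)$. The other is to keep $N^{2\lambda-1}$ and weaken \eqref{eq:bound-S} to $C(N^{\lambda-1/2}+N^{3\lambda-2})\int_0^t\varGamma_{\lambda,N}(s,1)\,\dd s$.

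Either version suffices for every later use. Corollary \ref{cor:QV-estimate} and \eqref{eq:mart-phi-1} only need the bracket increments to be bounded by $C(\phi)\,N^{2\lambda-1}\int\varGamma_{\lambda,N}(s,1)\,\dd s$, which also follows directly from $[\nabla(\vartheta\phi)/\vartheta]^2\le 2(e^{\beta}-1)^2\|\phi\|_\infty^2+2e^{2\beta}N^{-1}\|\phi'\|_\infty^2$.
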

\begin{proof}
	By \eqref{eq:avg-discrete-spde} and summation by parts ($\nabla$ and $\Delta$ always act on the $x$ variables),
	\begin{align}\label{eq:dY}
		\begin{aligned}
		\dd \cY_{\lambda,N}(t,\phi)
		&=
		\sum_{x\in\Z}
		\tonde{
			\partial_t+\frac N2\Delta
		}
		\tonde{
			\vartheta_{\lambda,N}(t,x)\,\phi_{\lambda,N}(t,x)
		}
		\eta_{tN}(x)\,\dd t
		\\
		&\quad
		+
		\frac12
		\sum_{x\in\Z}
		\nabla\tttonde{
			\vartheta_{\lambda,N}(t,x)\,\phi_{\lambda,N}(t,x)
		}\,
		\nabla\eta_{tN^-}(x)\,
		\dd\overline\omega_N(t,x)\fstop
		\end{aligned}
	\end{align}
A direct computation ensures
	\begin{equation}\label{eq:drift-computation}
		\tonde{
			\partial_t+\frac N2\Delta
		}
		\tonde{
			\vartheta_{\lambda,N}(t,x)\,\phi_{\lambda,N}(t,x)
		}
		=
		\vartheta_{\lambda,N}(t,x)
		\set{
			\frac12(\partial_z^2\phi)_{\lambda,N}(t,x)
			+
			\varepsilon_{\lambda,N}(t,x)
		}\comma
	\end{equation}
	with
	\begin{equation}
		\sup_{t\ge 0}\sup_{x\in\Z}
		\abs{\varepsilon_{\lambda,N}(t,x)}
		\le
		C_1(\phi)\,\tttonde{\beta_{\lambda,N}^2+N^{-1}}\fstop
	\end{equation}
	This gives the drift and error terms in  \eqref{eq:martingale-decomposition}, as well as the bound in
	\eqref{eq:bound-R}.
	
	As for the martingale term, \eqref{eq:dY} yields
	\begin{equation}
		\langle \cM_{\lambda,N}(\emparg,\phi)\rangle(t)=\int_0^t \frac{N}4 \sum_{x\in \Z} [\nabla(\vartheta_{\lambda,N}(s,x)\,\phi_{\lambda,N}(s,x))]^2\,[\nabla \eta_{sN}(x)]^2\, \dd s\comma
	\end{equation}
	where we used the Poisson clocks' independence and  
	$\langle\overline\omega_N(\emparg,x)\rangle(t)=Nt$. Since 
	\begin{align}\label{eq:nabla-theta-phi}
		\nabla(\vartheta_{\lambda,N}(t,x)\,\phi_{\lambda,N}(t,x))&= \vartheta_{\lambda,N}(t,x)\set{\tttonde{e^{\beta_{\lambda,N}}-1}\,\phi_{\lambda,N}(t,x)+ e^\beta\, \nabla \phi_{\lambda,N}(t,x)}\comma
	\end{align}
we  obtain, for some $C_2(\phi)>0$ and all $N$ large enough, uniformly over $t\ge 0$ and $x\in \Z$,	 
	\begin{align}
	\left|\left[\frac{\nabla(\vartheta_{\lambda,N}(t,x)\,\phi_{\lambda,N}(t,x))}{\vartheta_{\lambda,N}(t,x)}\right]^2-[\tttonde{e^{\beta_{\lambda,N}}-1}\,\phi_{\lambda,N}(t,x)]^2\right|
	 \le  C_2(\phi)\,\tttonde{e^{\beta_{\lambda,N}}-1}^2\, N^{1/2-\lambda}\fstop
	\end{align}
Since $\tttonde{e^{\beta_{\lambda,N}}-1}^2\sim N^{2\lambda-2}$, this recovers \eqref{eq:pred-QV-2} and \eqref{eq:bound-S}.
\end{proof}
\begin{remark}\label{rem:martingale} When $\phi \equiv 1$, the quantity in \eqref{eq:drift-computation} vanishes and, thus, 
	\begin{equation}\label{eq:rem-martingales}
		\cY_{\lambda,N}(t,1)= 1 + \cM_{\lambda,N}(t,1)\comma\qquad t \ge 0\fstop
	\end{equation}
	
\end{remark}

\subsection{Technical estimates}
We now estimate the expectation of the  quadratic variation in \eqref{eq:pred-QV-2} when $\phi =1$. Our main result of this part is Corollary \ref{cor:QV-estimate}.

By the gradient equations \eqref{eq:gradient-discrete-spde}--\eqref{eq:gradient-mild} and the identity (cf.\ \eqref{eq:tilt-micro})
\begin{equation}
	\vartheta_{\lambda,N}(t,x)=\vartheta_{\lambda,N}(t-s,x-y)\,\vartheta_{\lambda,N}(s,y)\comma\qquad 0\le s \le t\comma x,y \in \Z\comma
\end{equation} 
we get
\begin{equation}\label{eq:gradient-squared}
	\begin{aligned}
 &\vartheta_{\lambda,N}(t,x)^2\,\E[(\nabla \eta_{tN}(x))^2] = \vartheta_{\lambda,N}(t,x)^2\, [\nabla \pp_{tN}(x)]^2 \\
 &+ \int_0^t \sum_{y\in \Z}N(\tfrac12\Delta \pp_{(t-s)N}(x-y))^2\,\vartheta_{\lambda,N}(t-s,x-y)^2\,\vartheta_{\lambda,N}(s,y)^2\,\E[(\nabla \eta_{sN}(y))^2]\,\dd s\fstop
 \end{aligned}
\end{equation}
Define, for all $t\ge 0$,
\begin{align}
	f_{\lambda,N}(t)&\eqdef  \sum_{x\in \Z} \vartheta_{\lambda,N}(t,x)^2\,\E[(\nabla \eta_{tN}(x))^2]\comma
\\
	g_{\lambda,N}(t)&\eqdef  \sum_{x\in \Z} \vartheta_{\lambda,N}(t,x)^2\,[\nabla \pp_{tN}(x)]^2\comma
\\
	h_{\lambda,N}(t)&\eqdef N\sum_{x\in \Z} (\tfrac12 \Delta \pp_{tN}(x))^2\,\vartheta_{\lambda,N}(t,x)^2\fstop
\end{align}
Then, summing over $x\in \Z$ on both sides of \eqref{eq:gradient-squared}
yields the renewal equation
\begin{equation}\label{eq:renewal-equation}
	f_{\lambda,N}(t)= g_{\lambda,N}(t) + \int_0^t  h_{\lambda,N}(t-s)\, f_{\lambda,N}(s)\, \dd s\fstop
\end{equation}
 Remark that (cf.\ \eqref{eq:carre-du-champ})
\begin{equation}\label{eq:F-carre-du-champ}
	f_{\lambda,N}(t)=\E[\varGamma_{\lambda,N}(t,1)]\fstop
\end{equation}

In the remainder of this section, we estimate time integrals of $f_{\lambda,N}$ (Lemma \ref{lem:F}) through \eqref{eq:renewal-equation} and an analysis of  $h_{\lambda,N}$ and $g_{\lambda,N}$ (Lemmas \ref{lem:q} and \ref{lem:G}, respectively).
\begin{lemma}\label{lem:q}
	For every $\eps\in(0,1)$,
	there exists $C_\eps>0$ satisfying
	\begin{equation}\label{eq:q-bound}
		\int_0^\infty e^{-\rho t}h_{\lambda,N}(t)\,\dd t
		\le
	\frac{1+\eps}2
		+
		C_\eps\frac{N^{4(\lambda-7/8)}}{\sqrt \rho}\comma
	\end{equation}
 for all $\lambda\le 7/8$, $\rho>0$, and $N$ large enough.	
\end{lemma}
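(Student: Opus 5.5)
The plan is to compute $h_{\lambda,N}$ exactly in Fourier space via Plancherel. The tilt turns into a shift of the Fourier variable into the complex plane, and the Laplace transform in $t$ becomes an explicit one-dimensional integral over $k\in[-\pi,\pi]$. That integral splits into a "regular" part, giving the constant $\tfrac12$, and a "singular" part near $k=0$, giving the term $N^{4(\lambda-7/8)}\rho^{-1/2}$.

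\emph{Step 1 (Fourier representation).} Write $\beta=\beta_{\lambda,N}$, $\mathfrak c=\cosh\beta$, $\mathfrak s=\sinh\beta$ and $T=tN$. Since $\pp_T$ has generating function $\sum_x e^{zx}\pp_T(x)=e^{T(\cosh z-1)}$ for $z\in\mathbb C$, and $\Delta$ acts as multiplication by $2(\cosh z-1)$, we get for $z=\beta+ik$
\begin{equation}
\sum_{x\in\Z} e^{(\beta+ik)x}\,\tfrac12\Delta\pp_T(x)=(\cosh(\beta+ik)-1)\,e^{T(\cosh(\beta+ik)-1)}.
\end{equation}
By Plancherel and $\operatorname{Re}\cosh(\beta+ik)=\mathfrak c\cos k$,
\begin{equation}
h_{\lambda,N}(t)=\frac{N}{2\pi}\int_{-\pi}^{\pi}\abs{\cosh(\beta+ik)-1}^2\,e^{-2tN\mathfrak c(1-\cos k)}\,\dd k .
\end{equation}
Here the factor $e^{-2T(\mathfrak c-1)}$ coming from $\vartheta^2$ cancels exactly against $e^{2T(\mathfrak c\cos k-1)}$, up to the $(1-\cos k)$ term. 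Integrating in $t$ then gives
\begin{equation}
\int_0^\infty e^{-\rho t}h_{\lambda,N}(t)\,\dd t=\frac1{2\pi}\int_{-\pi}^{\pi}\abs{\cosh(\beta+ik)-1}^2\,\frac{N}{\rho+2N\mathfrak c(1-\cos k)}\,\dd k .
\end{equation}

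\emph{Step 2 (algebraic splitting).} Expand
\begin{equation}
\abs{\cosh(\beta+ik)-1}^2=(\mathfrak c\cos k-1)^2+\mathfrak s^2\sin^2k,
\end{equation}
and write $\mathfrak c\cos k-1=-\mathfrak c(1-\cos k)+(\mathfrak c-1)$. This gives
\begin{equation}
\abs{\cosh(\beta+ik)-1}^2\le \mathfrak c^2(1-\cos k)^2+2\mathfrak s^2(1-\cos k)+(\mathfrak c-1)^2 ,
\end{equation}
where the cross term is nonpositive and can be dropped.

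For the first two terms, which carry a factor $(1-\cos k)$, use $N/(\rho+2N\mathfrak c(1-\cos k))\le 1/(2\mathfrak c(1-\cos k))$. Their contribution is then at most
\begin{equation}
\frac1{2\pi}\int_{-\pi}^{\pi}\Big(\tfrac{\mathfrak c}{2}(1-\cos k)+\tfrac{\mathfrak s^2}{\mathfrak c}\Big)\dd k=\tfrac{\mathfrak c}{2}+\tfrac{\mathfrak s^2}{\mathfrak c}=\tfrac12+O(\beta^2).
\end{equation}
Since $\beta\to0$ (as $\lambda\le7/8<1$), this is at most $\tfrac{1+\eps}2$ for $N$ large.

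\emph{Step 3 (singular term).} The remaining contribution is
\begin{equation}
\frac{(\mathfrak c-1)^2}{2\pi}\int_{-\pi}^{\pi}\frac{N}{\rho+2N\mathfrak c(1-\cos k)}\,\dd k .
\end{equation}
Use $1-\cos k\ge 2k^2/\pi^2$ on $[-\pi,\pi]$ and extend the integral to $\R$. This bounds it by $C\int_\R N(\rho+cNk^2)^{-1}\dd k=C'\sqrt{N/\rho}$. Since $(\mathfrak c-1)^2\sim\beta^4/4\sim N^{4(\lambda-1)}/4$, the total is at most $C N^{4\lambda-4+1/2}\rho^{-1/2}=CN^{4(\lambda-7/8)}\rho^{-1/2}$, uniformly in $\lambda\le7/8$.

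The only delicate point is Step 1: checking that the tilt weights combine exactly into the decaying exponent $-2tN\mathfrak c(1-\cos k)$ with no residual growth. This uses $\vartheta^2=e^{2\beta x-2T(\mathfrak c-1)}$ together with the real part of $\cosh(\beta+ik)$. Once that is done, the rest is elementary. The hypothesis $\lambda\le7/8$ is used only to keep the constants uniform and to ensure $\beta\to0$.
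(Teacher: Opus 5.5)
Your proof is correct and follows essentially the paper's route: Plancherel plus the Laplace transform in time, where the terms carrying a factor $(1-\cos k)$ give $\tfrac12+O(\beta^2)$ and the $(\cosh\beta-1)^2$ term produces the $\sqrt{N/\rho}$ singularity at $k=0$. The difference is only in bookkeeping. You split the complex symbol $\cosh(\beta+ik)-1$ exactly into real and imaginary parts and drop a nonpositive cross term, whereas the paper first decomposes $e^{\beta x-r(\cosh\beta-1)}\Delta\pp_r$ in physical space into $\Delta\mathsf q_\beta$, $\nabla\mathsf q_\beta$, $\mathsf q_\beta$ pieces and then applies an $\eps$-weighted Young inequality; your version therefore avoids the $\eps$-loss in the main term.
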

\begin{proof}
	Set $\beta=\beta_{\lambda,N}$ and
	\begin{equation}\label{eq:p-beta}
		\mathsf q_\beta(r,x)
		\eqdef
		e^{\beta x-r(\cosh\beta-1)}\pp_r(x)\comma
		\qquad r\ge0\comma x\in\Z\fstop
	\end{equation}
	Then, $\mathsf q_\beta$ is the transition kernel of the continuous-time walk with
	jump rates $e^\beta/2$ to the right and $e^{-\beta}/2$ to the left. Changing variables $r=tN$,
	\begin{equation}
		\int_0^\infty e^{-\rho t}h_{\lambda,N}(t)\,\dd t
		=
		\frac14\int_0^\infty e^{-\rho r/N}
		\sum_{x\in\Z}
		\tonde{e^{\beta x-r(\cosh\beta-1)}\Delta \pp_r(x)}^2\dd r\fstop
	\end{equation}
	Moreover,
	\begin{align}
		e^{\beta x-r(\cosh\beta-1)}\Delta \pp_r(x)
		&=
		e^{-\beta}\mathsf q_\beta(r,x+1)+e^\beta \mathsf q_\beta(r,x-1)-2\mathsf q_\beta(r,x)
		\nonumber\\
		&=
		\Delta \mathsf q_\beta(r,x)
		+
		(e^{-\beta}-1)\nabla \mathsf q_\beta(r,x)
		-
		(e^\beta-1)\nabla \mathsf q_\beta(r,x-1)
		\nonumber\\
		&\quad+
		(e^\beta+e^{-\beta}-2)\mathsf q_\beta(r,x)\fstop
	\end{align}
	Hence, by the elementary inequality $(a+b+c)^2\le (1+\eps)a^2+2(1+\eps^{-1})(b^2+c^2)$, 
	\begin{align}
		\int_0^\infty e^{-\rho t}h_{\lambda,N}(t)\,\dd t
		&\le
		\frac{1+\varepsilon}{4}
		\int_0^\infty e^{-\rho r/N}\|\Delta \mathsf q_\beta(r,\emparg)\|_{\ell^2(\Z)}^2\,\dd r
		\nonumber\\
		&\quad+
		2(1+\eps^{-1})\beta^2
		\int_0^\infty e^{-\rho r/N}\|\nabla \mathsf q_\beta(r,\emparg)\|_{\ell^2(\Z)}^2\,\dd r
		\nonumber\\
		&\quad+
		2(1+\eps^{-1})\beta^4
		\int_0^\infty e^{-\rho r/N}\|\mathsf q_\beta(r,\emparg)\|_{\ell^2(\Z)}^2\,\dd r\fstop
	\end{align}
By Plancherel,
\begin{equation}\label{eq:plancherel}
	\abs{\widehat {\mathsf q}_\beta(r,\xi)}^2
	=
	\exp\set{-2r\cosh\beta\,(1-\cos\xi)}\comma
	\qquad \xi\in[-\pi,\pi]\fstop
\end{equation}
Therefore, using $\widehat{\nabla f}=(e^{i\xi}-1)\widehat f$ and
$\widehat{\Delta f}=-2(1-\cos\xi)\widehat f$,
\begin{align}
	\frac14\int_0^\infty e^{-\rho r/N}
	\|\Delta \mathsf q_\beta(r,\emparg)\|_{\ell^2(\Z)}^2\,\dd r
	&=
	\frac1{2\pi}\int_{-\pi}^{\pi}
	\frac{(1-\cos\xi)^2}
	{\rho/N+2\cosh\beta\,(1-\cos\xi)}
	\,\dd\xi
	\nonumber\\
	&\le
	\frac1{2\cosh\beta}
	\frac1{2\pi}\int_{-\pi}^{\pi}(1-\cos\xi)\,\dd\xi
	\nonumber\\
	&=
	\frac1{2\cosh\beta}\le\frac12\comma
\end{align}
while
\begin{align}
	\int_0^\infty e^{-\rho r/N}
	\|\nabla \mathsf q_\beta(r,\emparg)\|_{\ell^2(\Z)}^2\,\dd r
	&=
	\frac1{2\pi}\int_{-\pi}^{\pi}
	\frac{2(1-\cos\xi)}
	{\rho/N+2\cosh\beta\,(1-\cos\xi)}
	\,\dd\xi
\le 1\comma
\end{align}
and
\begin{align}
	\int_0^\infty e^{-\rho r/N}
	\|\mathsf q_\beta(r,\emparg)\|_{\ell^2(\Z)}^2\,\dd r
	&=
	\frac1{2\pi}\int_{-\pi}^{\pi}
	\frac{\dd\xi}
	{\rho/N+2\cosh\beta\,(1-\cos\xi)}
	\nonumber\\
	&\le
	C_1\int_{-\pi}^{\pi}
	\frac{\dd\xi}{\rho/N+\xi^2}
	\nonumber\\
	&\le
	C_2\sqrt{\frac N\rho}\fstop
\end{align}
Observing that $\beta=\beta_{\lambda,N}\sim N^{\lambda-1}$ as $N\to \infty$ concludes the proof.
\end{proof}

\begin{lemma}\label{lem:G}
 There exists $C>0$ satisfying, for all $1/2<\lambda<1$,
	$T\ge0$ and $N\in\N$,
	\begin{equation}\label{eq:G-bound}
		\int_0^T g_{\lambda,N}(t)\,\dd t
		\le
		C\set{
			N^{-1}
			+
			N^{2\lambda-5/2}\,T^{1/2}
		}\fstop
	\end{equation}
\end{lemma}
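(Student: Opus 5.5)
The plan is to move to the tilted walk, exactly as in the proof of Lemma \ref{lem:q}, and then compute in Fourier space. With $\beta=\beta_{\lambda,N}$ and $\mathsf q_\beta$ the tilted kernel from \eqref{eq:p-beta}, we have $\vartheta_{\lambda,N}(t,x)\,\pp_{tN}(x)=\mathsf q_\beta(tN,x)$. It follows that
\begin{equation}
\vartheta_{\lambda,N}(t,x)\,\nabla\pp_{tN}(x)=e^{-\beta}\mathsf q_\beta(tN,x+1)-\mathsf q_\beta(tN,x)=\nabla\mathsf q_\beta(tN,x)+(e^{-\beta}-1)\,\mathsf q_\beta(tN,x+1)\fstop
\end{equation}
Using $(a+b)^2\le 2a^2+2b^2$ gives
\begin{equation}
g_{\lambda,N}(t)\le 2\|\nabla\mathsf q_\beta(tN,\emparg)\|_{\ell^2(\Z)}^2+2(e^{-\beta}-1)^2\|\mathsf q_\beta(tN,\emparg)\|_{\ell^2(\Z)}^2\fstop
\end{equation}
We then integrate in $t\in[0,T]$ and substitute $r=tN$, which produces a factor $N^{-1}$ in front of $\int_0^{TN}(\cdots)\,\dd r$.

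The two terms are handled separately with the Plancherel identity \eqref{eq:plancherel}. For the gradient term, integrating over all $r\in[0,\infty)$ is enough:
\begin{equation}
\int_0^\infty\|\nabla\mathsf q_\beta(r,\emparg)\|_{\ell^2}^2\,\dd r=\frac1{2\pi}\int_{-\pi}^{\pi}\frac{2(1-\cos\xi)}{2\cosh\beta\,(1-\cos\xi)}\,\dd\xi\le 1\fstop
\end{equation}
After the factor $N^{-1}$, this gives the $CN^{-1}$ term. For the mass term, $\|\mathsf q_\beta(r,\emparg)\|_{\ell^2}^2=\frac1{2\pi}\int e^{-2r\cosh\beta(1-\cos\xi)}\,\dd\xi\le C(1+r)^{-1/2}$, uniformly in $\beta$ because $\cosh\beta\ge1$. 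Consequently $\int_0^{TN}\|\mathsf q_\beta(r,\emparg)\|_{\ell^2}^2\,\dd r\le C(TN)^{1/2}$, up to an additive $O(1)$ that can be absorbed. Since $(e^{-\beta}-1)^2\le C\beta^2\le CN^{2\lambda-2}$ for $\lambda<1$, the mass term contributes
\begin{equation}
N^{-1}\cdot N^{2\lambda-2}\cdot (TN)^{1/2}=N^{2\lambda-5/2}T^{1/2}\fstop
\end{equation}

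The only real care point is uniformity. The constant $C$ must not depend on $\lambda\in(1/2,1)$ or on $N$. This holds because $\beta_{\lambda,N}=\operatorname{arcsinh}(N^{\lambda-1})\le\operatorname{arcsinh}(1)$, so $|e^{-\beta}-1|\le C\beta\le C\sinh\beta=CN^{\lambda-1}$ with an absolute constant. The bounds on the two Fourier integrals above use only $\cosh\beta\ge1$.

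A secondary point is that the crude bound $C(1+r)^{-1/2}$ produces an extra additive constant when $TN$ is small. There are two ways to deal with it. One is to use $\|\mathsf q_\beta(r,\emparg)\|_{\ell^2}^2\le1$ for $r\le1$, which gives the contribution $N^{-1}\cdot N^{2\lambda-2}\cdot\min(TN,1)\le N^{-1}$, absorbed into the first term. The other is to use the sharper bound $\min(1,Cr^{-1/2})$ directly, whose integral over $[0,TN]$ is at most $C(TN)^{1/2}$. No other obstacle is expected.
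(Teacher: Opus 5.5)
Your proposal is correct and follows essentially the same route as the paper: rewrite $\vartheta_{\lambda,N}\nabla\pp_{tN}$ through the tilted kernel $\mathsf q_\beta$ as a gradient term plus an $O(\beta)$ multiple of $\mathsf q_\beta$, bound both $\ell^2$ norms via Plancherel, and integrate in time after the substitution $r=tN$. The differences are cosmetic: you integrate the gradient term exactly over $[0,\infty)$ in Fourier space instead of using the pointwise decay $(1+r)^{-3/2}$. Also, your worry about an extra additive constant is unnecessary, since $\int_0^{TN}(1+r)^{-1/2}\,\dd r=2\big((1+TN)^{1/2}-1\big)\le 2(TN)^{1/2}$.
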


\begin{proof}
	Set $\beta=\beta_{\lambda,N}\ge 0$ and $\mathsf q_\beta$ as in \eqref{eq:p-beta}.
	Then,
	\begin{align}
		\vartheta_{\lambda,N}(t,x)\nabla \pp_{tN}(x)
		&=
		e^{-\beta}\mathsf q_\beta(tN,x+1)-\mathsf q_\beta(tN,x)
		\nonumber\\
		&=
		e^{-\beta}\nabla \mathsf q_\beta(tN,x)
		+
		\tttonde{e^{-\beta}-1}\,\mathsf q_\beta(tN,x)\fstop
	\end{align}
	Since $\ttabs{e^{-\beta}-1}\le \beta$, we get
	\begin{equation}
		g_{\lambda,N}(t)
		\le
		2\|\nabla \mathsf q_\beta(tN,\emparg)\|_{\ell^2(\Z)}^2
		+
		2\beta^2\|\mathsf q_\beta(tN,\emparg)\|_{\ell^2(\Z)}^2\fstop
	\end{equation}
	Changing variables $r=tN$ gives
	\begin{equation}
		\int_0^T g_{\lambda,N}(t)\,\dd t
		\le
		2N^{-1}\int_0^{TN}\|\nabla \mathsf q_\beta(r,\emparg)\|_{\ell^2(\Z)}^2\,\dd r
		+
		2\beta^2N^{-1}\int_0^{TN}\|\mathsf q_\beta(r,\emparg)\|_{\ell^2(\Z)}^2\,\dd r\fstop
	\end{equation}
	Recall \eqref{eq:plancherel}.
	Hence, using $1-\cos\xi\asymp \xi^2$ on $[-\pi,\pi]$ and
	$\cosh\beta\ge1$,
	\begin{equation}
		\|\nabla \mathsf q_\beta(r,\emparg)\|_{\ell^2(\Z)}^2
		\le C(1+r)^{-3/2}\comma
		\qquad
		\|\mathsf q_\beta(r,\emparg)\|_{\ell^2(\Z)}^2
		\le C(1+r)^{-1/2}\fstop
	\end{equation}
	Therefore, for some absolute constant $C>0$ (whose value may change from line to line)
	\begin{align}
		\int_0^T g_{\lambda,N}(t)\,\dd t
		&\le
		CN^{-1}\int_0^{TN}(1+r)^{-3/2}\,\dd r
		+
		C\beta^2N^{-1}\int_0^{TN}(1+r)^{-1/2}\,\dd r
		\nonumber\\
		&\le
		CN^{-1}\set{1-(1+TN)^{-1/2}}
		+
		C\beta^2N^{-1}\set{(1+TN)^{1/2}-1}\\
		&\le CN^{-1} + C\beta^2 N^{-1}(TN)^{1/2}\fstop
	\end{align}
	The final conclusion follows because $\beta=\beta_{\lambda,N}\sim N^{\lambda-1}$ as $N\to \infty$.
\end{proof}

\begin{lemma}\label{lem:F} There exists $C>0$ such that, for all $1/2<\lambda \le 7/8$, $T>0$ and $N$ large enough,
	\begin{equation}\label{eq:F-bound}
		\int_0^T f_{\lambda,N}(t)\,\dd t \le C e^{CT}\set{N^{-1}+N^{2\lambda-5/2}\,T^{1/2}}\fstop
	\end{equation}
\end{lemma}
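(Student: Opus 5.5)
The plan is to solve the renewal equation \eqref{eq:renewal-equation} through Laplace transforms, after introducing an exponential weight that absorbs the growth in $T$. Since $f_{\lambda,N},g_{\lambda,N},h_{\lambda,N}\ge0$, the equation iterates to the Neumann series
\begin{equation}
	f_{\lambda,N}=\sum_{n\ge0}h_{\lambda,N}^{*n}*g_{\lambda,N}\comma
\end{equation}
where $*$ denotes convolution on $[0,\infty)$. This series holds as an identity between nonnegative functions: a priori $f_{\lambda,N}$ is finite and locally bounded because $\E[\eta_{tN}(x)^2]\le \pp_{tN}(x)$, so the remainder after $n$ iterations tends to zero.

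Fix $\rho>0$, to be chosen later as a large constant. Multiply by $e^{-\rho t}$ and integrate over $[0,\infty)$. Writing $\hat u(\rho)\eqdef\int_0^\infty e^{-\rho t}u(t)\,\dd t$, this gives
\begin{equation}
	\hat f_{\lambda,N}(\rho)=\frac{\hat g_{\lambda,N}(\rho)}{1-\hat h_{\lambda,N}(\rho)}\comma
\end{equation}
provided $\hat h_{\lambda,N}(\rho)<1$.

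By Lemma \ref{lem:q}, applied with, say, $\eps=1/2$, and using $\lambda\le 7/8$ so that $N^{4(\lambda-7/8)}\le1$, we get
\begin{equation}
	\hat h_{\lambda,N}(\rho)\le \tfrac34+C\rho^{-1/2}\fstop
\end{equation}
Choosing $\rho=\rho_0$ a large absolute constant therefore gives $\hat h_{\lambda,N}(\rho_0)\le 7/8$, uniformly in $N$ large and $\lambda\in(1/2,7/8]$. It follows that $\hat f_{\lambda,N}(\rho_0)\le 8\,\hat g_{\lambda,N}(\rho_0)$.

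It remains to pass between finite-horizon integrals and Laplace transforms. In one direction,
\begin{equation}
	\int_0^T f_{\lambda,N}\,\dd t\le e^{\rho_0T}\hat f_{\lambda,N}(\rho_0)\fstop
\end{equation}
In the other direction, I bound $\hat g_{\lambda,N}(\rho_0)$ by Lemma \ref{lem:G}. Writing $G(t)\eqdef\int_0^tg_{\lambda,N}$ and integrating by parts,
\begin{equation}
	\hat g_{\lambda,N}(\rho_0)=\rho_0\int_0^\infty e^{-\rho_0t}G(t)\,\dd t\le C\rho_0\int_0^\infty e^{-\rho_0 t}\set{N^{-1}+N^{2\lambda-5/2}t^{1/2}}\dd t\le C'\set{N^{-1}+N^{2\lambda-5/2}}\fstop
\end{equation}
Combining these gives $\int_0^Tf_{\lambda,N}\le Ce^{CT}\{N^{-1}+N^{2\lambda-5/2}\}$.

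To recover the $T^{1/2}$ factor in front of the second term as stated, I will not transform over $[0,\infty)$. Instead I truncate to $[0,T]$: replacing $g_{\lambda,N}$ by $g_{\lambda,N}\car_{[0,T]}$ does not change $f_{\lambda,N}$ on $[0,T]$, by causality of the renewal equation. Then
\begin{equation}
	\hat g_{\lambda,N}(\rho_0)\le\int_0^T g_{\lambda,N}\le C\set{N^{-1}+N^{2\lambda-5/2}T^{1/2}}\comma
\end{equation}
which yields exactly \eqref{eq:F-bound} with $C$ replaced by $\max(C,\rho_0)$.

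The only delicate point is the uniform strict subcriticality $\hat h_{\lambda,N}(\rho_0)<1$. This is exactly where $\lambda\le7/8$ enters, through the term $N^{4(\lambda-7/8)}$ in Lemma \ref{lem:q}, together with the sharp constant $1/2$ in that lemma. Because the leading constant is $1/2<1$, the margin is ample and the argument goes through without further work. One should also check the a priori finiteness that justifies the Neumann series, but this follows from the boundedness of the Poisson rates for fixed $N$.
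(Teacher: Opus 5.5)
Your proof is correct and is essentially the paper's argument: choose $\rho$ large so that Lemma \ref{lem:q}, with $\lambda\le 7/8$, makes the Laplace transform of $h_{\lambda,N}$ uniformly smaller than one. Then bound the $e^{-\rho t}$-weighted integral of $f_{\lambda,N}$ over $[0,T]$ by a constant times $\int_0^T g_{\lambda,N}$, undo the weight at cost $e^{\rho T}$, and conclude with Lemma \ref{lem:G}. The paper does this directly: it integrates the renewal equation against $e^{-\rho t}$ on $[0,T]$ and absorbs the convolution term, which is legitimate since $f_{\lambda,N}$ is locally bounded. Your Neumann-series and causality detour, and the intermediate bound without the $T^{1/2}$ factor, are therefore unnecessary but harmless.
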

\begin{proof}
	By \eqref{eq:renewal-equation}, we have, for all $\rho\ge 0$, 
	\begin{align}
	 \int_0^T e^{-\rho t}f_{\lambda,N}(t)\, \dd t
		& = \int_0^T e^{-\rho t}g_{\lambda,N}(t)\,\dd t + \int_0^T e^{-\rho t}\int_0^t f_{\lambda,N}(s)\,h_{\lambda,N}(t-s)\,\dd s\,\dd t\\
		&\le \int_0^T e^{-\rho t}g_{\lambda,N}(t)\,\dd t + \int_0^\infty e^{-\rho t} h_{\lambda,N}(t)\,\dd t \int_0^T e^{-\rho t}f_{\lambda,N}(t)\,\dd t\fstop
		\end{align}
	By \eqref{eq:q-bound} and the assumption $\lambda \le 7/8$, there exists $\rho_\ast>0$ satisfying
	\begin{equation}
		\int_0^\infty e^{-\rho_\ast t}h_{\lambda,N}(t)\,\dd t\le 3/4
	\end{equation}
	for all $N$ large enough,
	and, thus,
	\begin{equation}
		\int_0^T e^{-\rho_\ast t}f_{\lambda,N}(t)\,\dd t \le 4 \int_0^T e^{-\rho_\ast t}g_{\lambda,N}(t)\,\dd t\le 4\int_0^T g_{\lambda,N}(t)\,\dd t\fstop
	\end{equation}
	We get the desired estimate by \eqref{eq:G-bound} and
	\begin{equation}
		\int_0^T f_{\lambda,N}(t)\,\dd t \le e^{\rho_\ast T}\int_0^T e^{-\rho_\ast t}f_{\lambda,N}(t)\,\dd t\fstop 	\end{equation}
		This concludes the proof of the lemma.
\end{proof}

We readily obtain the following bound.
\begin{corollary}\label{cor:QV-estimate} Fix $1/2<\lambda \le 7/8$ and $\phi \in \cC_b^\infty(\R)$. Then, there exist $C=C(\phi)>0$ satisfying, for all  $T>0$ and $N$ large enough, 
	\begin{equation}\label{eq:estimate-mart-second-moment}
	\E[(\cM_{\lambda,N}(T,\phi))^2]=	\E[\langle \cM_{\lambda,N}(\emparg,\phi)\rangle(T)]\le C e^{CT} \set{N^{2\lambda-2}+ N^{4(\lambda-7/8)}\,T^{1/2}}\fstop
	\end{equation}
	As a consequence, we have
		\begin{align}\label{eq:time-integral}
		\limsup_{N\to \infty}\int_0^T \E[(\cY_{\lambda,N}(t,\phi))^2]\,\dd t<\infty\fstop
	\end{align}
\end{corollary}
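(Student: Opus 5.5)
The plan is to feed Lemma \ref{lem:F} into the quadratic variation formula of Lemma \ref{lem:martingale-decomposition}, and then control the second moment of the field through its decomposition.

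\textbf{Quadratic variation bound.} The equality $\E[(\cM_{\lambda,N}(T,\phi))^2]=\E[\langle \cM_{\lambda,N}(\emparg,\phi)\rangle(T)]$ is standard. The martingale starts at zero and is square integrable for fixed $N$, because the jumps are bounded by the finite total mass times bounded tilts on compact time intervals. For the upper bound, take expectations in \eqref{eq:pred-QV-2}. Since $\phi$ is bounded, $\varGamma_{\lambda,N}(s,\phi)\le \|\phi\|_\infty^2\,\varGamma_{\lambda,N}(s,1)$. Together with \eqref{eq:bound-S} and \eqref{eq:F-carre-du-champ}, this gives
\begin{equation}
\E[\langle \cM_{\lambda,N}(\emparg,\phi)\rangle(T)]\le C(\phi)\tonde{N^{2\lambda-1}+N^{\lambda-1/2}}\int_0^T f_{\lambda,N}(t)\,\dd t\le C N^{2\lambda-1}\int_0^T f_{\lambda,N}(t)\,\dd t\comma
\end{equation}
where the last step uses $\lambda>1/2$. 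Lemma \ref{lem:F} then bounds the right-hand side by $Ce^{CT}\{N^{2\lambda-2}+N^{4\lambda-7/2}T^{1/2}\}$. Since $4\lambda-7/2=4(\lambda-7/8)$, this is exactly \eqref{eq:estimate-mart-second-moment}.

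\textbf{Time-integrated second moment.} For \eqref{eq:time-integral}, write $\cY_{\lambda,N}(t,\phi)$ via \eqref{eq:martingale-decomposition} and square, using $(a+b+c+d)^2\le 4(a^2+b^2+c^2+d^2)$.
\begin{itemize}
\item The martingale term is bounded uniformly in $t\le T$ by the estimate just proved. This bound is $O(1)$ because $\lambda\le 7/8$.
\item For the drift term, apply Cauchy--Schwarz in time and the pointwise bound $|\cY_{\lambda,N}(s,\frac12\partial_z^2\phi)|\le C\,\cY_{\lambda,N}(s,1)$, which holds by positivity of $\eta$ and $\vartheta$. This gives a bound $CT\int_0^t \E[\cY_{\lambda,N}(s,1)^2]\,\dd s$.
\item The term $\cR_{\lambda,N}$ is handled the same way through \eqref{eq:bound-R}, with an extra factor $N^{2\lambda-2}$.
\end{itemize}
It therefore suffices to bound $\E[\cY_{\lambda,N}(s,1)^2]$ uniformly for $s\le T$. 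This follows from Remark \ref{rem:martingale}: $\cY_{\lambda,N}(s,1)=1+\cM_{\lambda,N}(s,1)$, so $\E[\cY_{\lambda,N}(s,1)^2]=1+\E[\cM_{\lambda,N}(s,1)^2]$. By \eqref{eq:estimate-mart-second-moment} with $\phi\equiv1$, this is at most $1+Ce^{CT}(1+T^{1/2})$ for $N$ large. Integrating over $[0,T]$ gives a bound uniform in $N$, and hence the $\limsup$ is finite.

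\textbf{Main obstacle.} The only nontrivial input is Lemma \ref{lem:F}, which is assumed; it rests on the renewal constant $\frac{1+\eps}{2}<1$ from Lemma \ref{lem:q}. The remaining points are bookkeeping. The error $\cS_{\lambda,N}$ is absorbed because $N^{\lambda-1/2}\le N^{2\lambda-1}$. The constant test function $1\in\cC_b^\infty(\R)$ is admissible in Lemma \ref{lem:martingale-decomposition}, so the bound applies to $\cM_{\lambda,N}(\emparg,1)$.
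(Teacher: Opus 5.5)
Your proof is correct and follows the paper's route. The quadratic-variation bound comes, as you do it, from \eqref{eq:pred-QV-2}, \eqref{eq:bound-S}, \eqref{eq:F-carre-du-champ} and Lemma~\ref{lem:F}, and the second claim rests on the same two facts: the positivity domination $|\cY_{\lambda,N}(t,\psi)|\le\|\psi\|_\infty\,\cY_{\lambda,N}(t,1)$ and the identity $\cY_{\lambda,N}(t,1)=1+\cM_{\lambda,N}(t,1)$ from Remark~\ref{rem:martingale}.

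Your passage through the decomposition of $\cY_{\lambda,N}(t,\phi)$ is superfluous. Applying the domination directly with $\psi=\phi$ gives $\E[\cY_{\lambda,N}(t,\phi)^2]\le\|\phi\|_\infty^2\{1+\E[\cM_{\lambda,N}(t,1)^2]\}$ in one line, which is what the paper does.

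A minor aside on your square-integrability remark: $\vartheta_{\lambda,N}(t,\emparg)$ is not bounded on $\Z$. Square integrability for fixed $N$ is better justified via $\cY_{\lambda,N}(t,1)^2\le\sum_x\vartheta_{\lambda,N}(t,x)^2\eta_{tN}(x)$, whose mean is finite.
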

\begin{proof}
	The first claim is a direct consequence of \eqref{eq:pred-QV-2}, \eqref{eq:carre-du-champ}, \eqref{eq:F-carre-du-champ}, and \eqref{eq:F-bound}.

	By  Remark \ref{rem:martingale}, we have, for all  $t\ge 0$,
	\begin{align}
		\E[(\cY_{\lambda,N}(t,\phi))^2]&\le \norm{\phi}_\infty^2\E[(\cY_{\lambda,N}(t,1))^2]= \norm{\phi}_\infty^2\set{1+\E[(\cM_{\lambda,N}(t,1))^2]}\fstop
	\end{align} 
	Hence, 
\eqref{eq:estimate-mart-second-moment} ensures the validity of \eqref{eq:time-integral}.
\end{proof}

\subsection{Tightness of projections}
This section is devoted to  proving tightness for the projections of the tilted fields against smooth test functions.
\begin{lemma}\label{lem:tightness-projections} Fix $\lambda = 7/8$. Then, for every $\phi \in \cC_b^\infty(\R)$, the family $(\cY_{\lambda,N}(\emparg,\phi))_{N\in \N}$ is tight in $\cD([0,\infty);\R)$.	
\end{lemma}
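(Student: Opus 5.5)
We use the martingale decomposition of Lemma~\ref{lem:martingale-decomposition},
\begin{equation}
\cY_{\lambda,N}(t,\phi)=\phi(0)+\int_0^t\cY_{\lambda,N}(s,\tfrac12\partial_z^2\phi)\,\dd s+\cM_{\lambda,N}(t,\phi)+\cR_{\lambda,N}(t,\phi)\comma
\end{equation}
and prove tightness in $\cD([0,\infty);\R)$ of each of the three non-constant pieces separately. A sum of $\cC$-tight sequences and one $\cD$-tight sequence is $\cD$-tight, so this suffices.

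\textbf{Drift and remainder terms.} We show that the drift term is $\cC$-tight. By Cauchy--Schwarz, for $s<t\le T$,
\begin{equation}
\E\Big[\Big|\int_s^t\cY_{\lambda,N}(r,\tfrac12\partial_z^2\phi)\,\dd r\Big|^2\Big]\le (t-s)\int_0^T\E[\cY_{\lambda,N}(r,\tfrac12\partial_z^2\phi)^2]\,\dd r\comma
\end{equation}
and the integral on the right is bounded uniformly in $N$ by \eqref{eq:time-integral}. This modulus bound gives $\cC$-tightness. Alternatively, Aldous' criterion with stopping times applies after a Cauchy--Schwarz step in time: $\E\big[\int_\tau^{\tau+\delta}|\cY|\,\dd r\big]\le \delta^{1/2}\big(\E\int_0^{T+1}\cY^2\big)^{1/2}$, using $|\cY_{\lambda,N}(r,\psi)|\le\|\psi\|_\infty\cY_{\lambda,N}(r,1)$. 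For the remainder, \eqref{eq:bound-R} gives
\begin{equation}
\E\Big[\sup_{t\le T}|\cR_{\lambda,N}(t,\phi)|\Big]\le CN^{-1/8}\int_0^T\E[\cY_{\lambda,N}(s,1)]\,\dd s=CTN^{-1/8}\to0\comma
\end{equation}
so $\cR_{\lambda,N}$ vanishes uniformly on compacts.

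\textbf{Martingale term.} This is the main obstacle. At $\lambda=7/8$, Corollary~\ref{cor:QV-estimate} only bounds $\E\langle\cM\rangle(T)$. Aldous' stopping-time criterion needs
\begin{equation}
\limsup_N\E\big[\langle\cM\rangle(\tau+\delta)-\langle\cM\rangle(\tau)\big]\to0\quad\text{as }\delta\to0\comma
\end{equation}
uniformly over stopping times $\tau\le T$. A naive bound via the renewal estimate at a random time is not available, since $f_{\lambda,N}$ is an expectation at deterministic times. We handle this with the second Aldous criterion \cite{aldous_stopping_1989}. The finite-dimensional convergence of Proposition~\ref{pr:conv-fdd}, together with uniform integrability from the moment bounds of Proposition~\ref{lim_moments} (using $k=4$), gives convergence of the finite-dimensional laws of $\cM_{\lambda,N}(\cdot,\phi)=\cY_{\lambda,N}(\cdot,\phi)-\phi(0)-\int_0^{\cdot}\cY_{\lambda,N}(s,\tfrac12\partial_z^2\phi)\,\dd s-\cR_{\lambda,N}$. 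We then identify the limit as the martingale
\begin{equation}
M(t)=\cY(t,\phi)-\phi(0)-\int_0^t\cY(s,\tfrac12\partial_z^2\phi)\,\dd s\comma
\end{equation}
which is continuous in $t$. The drift integral converges jointly with the other terms because it is $\cC$-tight and its finite-dimensional laws are determined by those of $\cY$. Aldous' theorem states that a sequence of martingales whose finite-dimensional distributions converge to those of a continuous martingale (plus $L^1$ uniform integrability of $\cM_{\lambda,N}(t,\phi)$, which follows from the second-moment bound \eqref{eq:estimate-mart-second-moment}) is tight in $\cD$. This yields tightness of $\cM_{\lambda,N}(\cdot,\phi)$.

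The remaining delicate points are as follows. First, the filtration in Aldous' result must be the natural one of the prelimit martingale; conditioning on the richer $\eta$-filtration preserves martingality, so this causes no problem. Second, the limit $M$ must be a genuine continuous martingale in its own filtration. This follows from the mild/weak formulation of the multiplicative SHE with the It\^o--Walsh solution of Proposition~\ref{pr:mSHE-characterization}: $M(t)=\int_0^t\int\phi\,\tfrac{1}{\sqrt2}\cY\,\xi$. Since the drift is $\cC$-tight, the remainder is negligible, and the martingale is $\cD$-tight, the sum $\cY_{\lambda,N}(\cdot,\phi)$ is tight in $\cD([0,\infty);\R)$.
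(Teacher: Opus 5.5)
Your handling of $\cR_{\lambda,N}$ is fine, and so is the drift, up to one slip. A bound $\E|D_N(t)-D_N(s)|^2\le C(t-s)$ is the borderline case of Kolmogorov's criterion and does not by itself give $\cC$-tightness. Use instead the pathwise estimate $|D_N(t)-D_N(s)|\le\norm{\psi}_\infty\sqrt{t-s}\,(\int_0^T\cY_{\lambda,N}(r,1)^2\,\dd r)^{1/2}$, where $\psi\eqdef\tfrac12\partial_z^2\phi$ and $D_N(t)\eqdef\int_0^t\cY_{\lambda,N}(r,\psi)\,\dd r$.

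The real gap is in the martingale step. Applying the criterion of \cite{aldous_stopping_1989} to $\cM_{\lambda,N}(\emparg,\phi)$ requires convergence of its finite-dimensional laws, i.e.\ joint convergence of $(\cY_{\lambda,N}(t_i,\phi),D_N(t_i))_i$. Your reason (``$D_N$ is $\cC$-tight and its laws are determined by those of $\cY$'') is not an argument. $D_N(t)$ is not a function of finitely many values $\cY_{\lambda,N}(s,\psi)$, so tightness of $D_N$ plus fixed-time convergence of the integrand does not identify a subsequential joint limit of $D_N$ with the point values. You need one of two things, and neither is supplied:
\begin{itemize}
\item a Riemann-sum approximation of $D_N$ that is uniform in $N$, which requires $L^1$ time regularity of $\cY_{\lambda,N}(\emparg,\psi)$ at deterministic times;
\item an $L^2$ identification of the limit via two-time correlations and fourth-moment bounds.
\end{itemize}

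The first can be supplied. By the isometry $\E[(\cM(s)-\cM(u))^2]=\E[\langle\cM\rangle(s)-\langle\cM\rangle(u)]$, the pointwise bound $\varGamma_{\lambda,N}(t,\psi)\le\norm{\psi}_\infty^2\varGamma_{\lambda,N}(t,1)$, and \eqref{eq:bound-S}, one gets for $u\le s$
\[
\E[(\cM_{\lambda,N}(s,\psi)-\cM_{\lambda,N}(u,\psi))^2]\le C\big(\E[\cY_{\lambda,N}(s,1)^2]-\E[\cY_{\lambda,N}(u,1)^2]\big),
\]
using $\cY_{\lambda,N}(\emparg,1)=1+\cM_{\lambda,N}(\emparg,1)$. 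Since $s\mapsto\E[\cY_{\lambda,N}(s,1)^2]$ is nondecreasing and bounded on $[0,T+1]$ uniformly in $N$, a telescoping sum over a mesh $h$ gives $\sup_N\E|D_N(t)-h\sum_{j<t/h}\cY_{\lambda,N}(jh,\psi)|\le C\sqrt h$. This closes your argument.

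Note, however, that this comparison of quadratic variations is exactly the paper's key step, and with it the paper's route is shorter. The paper applies Aldous' martingale criterion only to $\cM_{\lambda,N}(\emparg,1)=\cY_{\lambda,N}(\emparg,1)-1$. Its finite-dimensional convergence is immediate from Proposition~\ref{pr:conv-fdd}, because there is no drift (Remark~\ref{rem:martingale}). From $\cC$-tightness, Doob's inequality and fourth moments, it obtains $\E[(\cM_{\lambda,N}(\tau_N+\delta_N,1)-\cM_{\lambda,N}(\tau_N,1))^2]\to0$. It then transfers this to $\phi$ at stopping times through the same comparison. The limit of the drift integral never has to be identified.
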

\begin{proof} We omit $\lambda=7/8$ from the notation. As Proposition \ref{pr:conv-fdd} ensures that the family of random variables $(\cY_{N}(t,\phi))_{N\in \N}$ is tight, the desired claim follows from Aldous' tightness criterion via stopping times \cite[Theorem 1]{aldous_stopping_1978}; see also \cite[Theorem 16.10 \& Eq.\ (16.32)]{billingsley_convergence_1999}:
	\begin{enumerate}[(i)]
		\item \label{it:tightness-a} for every $T>0$;
		\item \label{it:tightness-b} for every sequence $(\tau_N)_{N\in \N}$ of stopping times (adapted with respect to the averaging process' natural filtration), a.s.\ bounded by $T$ and taking finitely many values;
		\item \label{it:tightness-c}for every sequence $(\delta_N)_{N\in \N}$ of positive constants in $(0,1)$ satisfying $\lim_{N\to \infty}\delta_N= 0$;
	\end{enumerate}
	one has
	\begin{equation}\label{eq:tightness-proj}
		\cY_N(\tau_N+\delta_N,\phi)-\cY_N(\tau_N,\phi)\xLongrightarrow[N\to \infty]{}0\fstop
	\end{equation}

In view of the decomposition in \eqref{eq:martingale-decomposition}, the bounds in \eqref{eq:bound-R}--\eqref{eq:bound-S}, and applications of Markov and Cauchy-Schwarz inequalities,  \eqref{eq:tightness-proj} follows if we prove, for the quantities in \ref{it:tightness-a}--\ref{it:tightness-c},
\begin{equation}\label{eq:drift-tightness}
\E\bigg[\bigg(\int_{\tau_N}^{\tau_N+\delta_N}\cY_{N}(t,\partial_z^2\phi)\,\dd t\bigg)^2\bigg]\le \delta_N\int_0^{T+1}\E[\cY_{N}(t,\partial_z^2\phi)^2]\,\dd t\xrightarrow[N\to \infty]{}0\comma
\end{equation}
and
\begin{equation}\label{eq:mart-tightness}
\E[(\cM_{N}(\tau_N+\delta_N,\phi)-\cM_{N}(\tau_N,\phi))^2]\xrightarrow[N\to \infty]{}0\fstop
\end{equation}
The claim in \eqref{eq:drift-tightness} follows from \eqref{eq:time-integral}. For the proof of \eqref{eq:mart-tightness}, we follow a soft argument.

By the optional stopping theorem applied to the martingales $(\cM_N(t,\phi))_{t\ge 0}$ and $(\cM_N(t,1))_{t\ge0}$ (adapted with respect to the averaging process' natural filtration) with the stopping times $\tau_N$ as in \ref{it:tightness-b}, we obtain (cf.\ \eqref{eq:bound-S})
\begin{align}\label{eq:mart-phi-1}
	\begin{aligned}
& \E[(\cM_{N}(\tau_N+\delta_N,\phi)-\cM_{N}(\tau_N,\phi))^2]\\
 &\quad=\E\bigg[\int_{\tau_N}^{\tau_N+\delta_N}\frac{N^{2\lambda-1}}{4} \varGamma_{N}(t,\phi^2)\,\dd t\bigg] + \E[\cS_N(\tau_N+\delta_N,\phi)-\cS_N(\tau_N,\phi)]\\
 &\quad\le C_1(\phi)\,{ \E\bigg[\int_{\tau_N}^{\tau_N+\delta_N}\frac{N^{2\lambda-1}}{4} \varGamma_{N}(t,1)\,\dd t\bigg]}\\
&\quad\le C_2(\phi)\, \E[(\cM_N(\tau_N+\delta_N,1)-\cM_N(\tau_N,1))^2]\fstop
\end{aligned}
 \end{align}
Let us take advantage of the identity in Remark \ref{rem:martingale}. In combination with Proposition \ref{pr:conv-fdd} applied with  $\phi_1=\ldots=\phi_k = 1$, we find that there exists an $\R$-valued process $(\cM(t,1))_{t\ge 0}= (\cY(t,1)-1)_{t\ge 0}$ for which:
\begin{enumerate}[(a)]
	\item All finite-dimensional distributions of $(\cM_N(\emparg,1))_{N\in \N}$ converge to those of $\cM(\emparg,1)$;
	\item $\cM(\emparg,1)$ is a.s.\ continuous;
	\item\label{it:c} For each $t\ge 0$,  $\lim_{N\to \infty}\E[(\cM_N(t,1))^4]=\E[(\cM(t,1))^4]<\infty$.
\end{enumerate}
As $\cM_N(\emparg,1)$ is a martingale, by \cite[Proposition 1.2]{aldous_stopping_1989}, these three properties ensure that $(\cM_N(\emparg,1))_{N\in \N}$ converges to $\cM(\emparg,1)$ in $\cD([0,\infty);\R)$. By the a.s.\ continuity of $\cM(\emparg,1)$,  $(\cM_N(\emparg,1))_{N\in \N}$ is actually $\cC$-tight, i.e., tight with respect to the uniform metric; see, e.g., \cite[Chapter 3, Theorem 10.2(a)]{ethier_kurtz_1986_Markov}. Hence, by, e.g., \cite[Chapter VI, Proposition 3.26(ii)]{jacod_shiryaev_limit_2003},
\begin{equation}\label{eq:mart-1}
	\cM_N(\tau_N+\delta_N,1)-\cM_N(\tau_N,1)\xLongrightarrow[N\to \infty]{}0
\end{equation}
holds true
for the quantities in \ref{it:tightness-a}--\ref{it:tightness-c}; more generally, \eqref{eq:mart-1} is valid for any sequence of (uniformly bounded) random times $(\tau_N)_{N\in \N}$.

By Doob's martingale inequality and item \ref{it:c}, we get
\begin{align}
\sup_{N\in \N}\E\bigg[\sup_{t\in [0,T+1]}|\cM_N(t,1)|^4\bigg]\le (4/3)^4\sup_{N\in \N}\E[|\cM_N(T+1,1)|^4]<\infty\fstop
\end{align}
Hence, the random variables $(\cM_N(\tau_N+\delta_N,1)-\cM_N(\tau_N,1))_{N\in \N}$ are uniformly $L^2$-integrable, and, thus, \eqref{eq:mart-1} may be upgraded, for the quantities in \ref{it:tightness-a}--\ref{it:tightness-c}, to
\begin{equation}
	\E[(\cM_N(\tau_N+\delta_N,1)-\cM_N(\tau_N,1))^2]\xrightarrow[N\to \infty]{}0\fstop
\end{equation}
By \eqref{eq:mart-phi-1}, this proves \eqref{eq:mart-tightness}.
\end{proof}
\begin{remark}\label{rem:carre-du-champ}
Proving \eqref{eq:mart-tightness} often goes through the  bound, for some $\eps>0$,
\begin{equation}
\E[(\cM_{N}(\tau_N+\delta_N,\phi)-\cM_{N}(\tau_N,\phi))^2]\le \delta_N^{\eps/(1+\eps)}\, \E\bigg[\int_0^{T+1}\bigg(\frac{N^{2\lambda-1}}{4} \varGamma_{N}(t,1)\bigg)^{1+\eps}\dd t\bigg]^{1/(1+\eps)}\comma
\end{equation}
which is cruder than \eqref{eq:mart-phi-1}. Our argument avoids to estimate this right-hand side.
\end{remark}

\subsection{Proof of Proposition \ref{pr:tightness}} We have now all the ingredients to prove Proposition \ref{pr:tightness}.
\begin{proof}[Proof of Proposition \ref{pr:tightness}]  By, e.g., \cite[Chapter 3, Theorem 9.1]{ethier_kurtz_1986_Markov}, tightness of $(\cY_{7/8,N})_{N\in \N}$ in $\cD([0,\infty);\mathscr M_f(\R))$ follows from the following two inputs:
	\begin{enumerate}[(a)]
		\item \textit{Tightness of projections.} For all $\phi \in \cC_b^\infty(\R)$,  $(\cY_{7/8,N}(\emparg,\phi))_{N\in \N}$ is tight in $\cD([0,\infty);\R)$;
		\item \textit{Compact containment condition.} For all $T>0$ and $\eps >0$, there exists a compact set $\mathscr K_{T,\eps}\subset \mathscr M_f(\R)$ satisfying 
		\begin{equation}
			\liminf_{N\to \infty}  \P(\cY_{7/8,N}(t,\emparg)\in \mathscr K_{T,\eps}\ \text{for all}\ t \in [0,T])>1-\eps\fstop
		\end{equation}
	\end{enumerate}
	The first property was precisely proven  in Lemma \ref{lem:tightness-projections}. 
	It remains to prove the compact containment condition.  The following argument is valid for every $\lambda\le1$.
	
	Fix $\lambda\le1$,  $T>0$ and
	$\eps>0$. For $\alpha\in\R$, set
	\begin{equation}
		\cX_N^\alpha(t)
		\eqdef
		\sum_{x\in\Z}
		\exp\set{\alpha x-tN(\cosh(\alpha)-1)}
		\eta_{tN}(x)\comma
		\qquad t\ge 0\fstop
	\end{equation}
As argued in Remark \ref{rem:martingale}, 
	$(\cX_N^\alpha(t))_{t\ge 0}$ is a non-negative martingale with mean one.

	Let $\beta_N=\beta_{\lambda,N}$ and fix $a>0$. Then, for all $t\le T$,
	\begin{align}
		\cY_{\lambda,N}(t,e^{a\emparg})
		&=
		\exp\set{
			tN\tttonde{\cosh(\beta_N+aN^{-1/2})-\cosh(\beta_N)}
			-atN^{\lambda-1/2}
		}
		\cX_N^{\beta_N+aN^{-1/2}}(t)\comma
	\end{align}
	and the exponent in front is bounded from above by $C(a,T)$, for all
	$N$ large enough. Indeed, writing $\delta_N\eqdef aN^{-1/2}$, Taylor's formula gives
	\begin{align}
		& tN\tttonde{\cosh(\beta_N+\delta_N)-\cosh(\beta_N)}
		-atN^{\lambda-1/2}  \\
		&\quad =
		tN\delta_N\sinh(\beta_N)
		-atN^{\lambda-1/2}
		+
		tN\int_0^{\delta_N}
		(\delta_N-r)\cosh(\beta_N+r)\,\dd r\fstop
	\end{align}
	As $N\sinh(\beta_N)=N^\lambda$, the first two terms cancel exactly. 
 Since $\sup_{N\in \N}\beta_N<\infty$ and
	$\delta_N\to0$, the Taylor remainder satisfies, for all $N$ large enough and some $C=C(a)>0$,
	\begin{equation}
		tN\int_0^{\delta_N}(\delta_N-r)\cosh(\beta_N+r)\,\dd r
		\le
		CTN \delta_N^2 =
		CTa^2\fstop
	\end{equation}
	Hence,
	\begin{equation}
	\sup_{t\in[0,T]}
	\set{
		tN\tttonde{\cosh(\beta_N+aN^{-1/2})-\cosh(\beta_N)}
		-atN^{\lambda-1/2}
	}
	\le C(a,T)\fstop
	\end{equation}
	
	This proves
	\begin{equation}\label{eq:+a}
		\cY_{\lambda,N}(t,e^{a\emparg})\le e^{C(a,T)}\,\cX_N^{\beta_N+aN^{-1/2}}(t)\comma\qquad t\in [0,T]\comma
	\end{equation}
	and the same argument gives
	\begin{equation}\label{eq:-a}
		\cY_{\lambda,N}(t,e^{-a\emparg})
		\le
		e^{C(a,T)}\cX_N^{\beta_N-aN^{-1/2}}(t)\comma
		\qquad t\in[0,T]\fstop
	\end{equation}
	Therefore, by \eqref{eq:+a}--\eqref{eq:-a} and  Doob's inequality for non-negative mean-one martingales,
	\begin{align}
		\P\bigg(
		\sup_{t\in[0,T]}\cY_{\lambda,N}(t,e^{a|\emparg|})>b
		\bigg)
		&\le
		\P\bigg(
		\sup_{t\in[0,T]}\cY_{\lambda,N}(t,e^{a\emparg})>\frac b2
		\bigg)+
		\P\bigg(
		\sup_{t\in[0,T]}\cY_{\lambda,N}(t,e^{-a\emparg})>\frac b2
		\bigg) \\
		&\le
		\frac{4e^{C(a,T)}}{b}\fstop
	\end{align}
	
	Choose $b=b(T,\eps)>0$ so large that the last quantity is smaller than
	$\eps$. Define
	\begin{equation}
		\mathscr K_{T,\eps}
		\eqdef
		\set{
			\nu\in\mathscr M_f(\R):
			\int_\R e^{a|z|}\,\nu(\dd z)\le b
		}\fstop
	\end{equation}
	This set is compact in $\mathscr M_f(\R)$. Indeed, it is closed by the lower
	semi-continuity of $\nu\mapsto \int e^{a|z|}\nu(\dd z)$, and it is tight since 
	$
	\sup_{r>0}	e^{ar}\nu([-r,r]^c)\le b $ holds true for all $\nu\in\mathscr K_{T,\eps}$.
	Thus, by Prokhorov's theorem,
	$\mathscr K_{T,\eps}
	\subset \mathscr M_f(\R)
	$ is compact. Hence,
	\begin{equation}
		\liminf_{N\to\infty}
		\P\big(
		\cY_{\lambda,N}(t,\emparg)\in\mathscr K_{T,\eps}
		\ \text{for all}\ t\in[0,T]
		\big)
		\ge 1-\eps\comma
	\end{equation}
	as desired.
\end{proof}

\section{Proof of Theorems \ref{thm:main} and \ref{th:non-critical}}\label{sec:proof-final}
In this short section, we collect all the ingredients to finish the proof of our two main results from Section \ref{sec:intro}.

\begin{proof}[Proof of Theorem \ref{thm:main}]
By Proposition \ref{pr:conv-fdd}, the finite-dimensional distributions of
$\cY_{7/8,N}$ converge to those of $\cY$. By Proposition \ref{pr:tightness},
the family $(\cY_{7/8,N})_{N\in\N}$ is tight in
$\cD([0,\infty);\mathscr M_f(\R))$. By Proposition \ref{pr:mSHE-characterization}, this identifies the unique possible limit,
thus, proving the theorem.
\end{proof}

\begin{proof}[Proof of Theorem \ref{th:non-critical}]
Fix $t>0$ and $0\neq\phi\in\cC_b^+(\R)$. By \eqref{eq:eta-E} and
\eqref{eq:tilted-LCLT-general},
$
	\E[\cY_{\lambda,N}(t,\phi)]
$
is bounded away from zero and infinity, uniformly in $N$ large. Hence, it is
enough to prove the corresponding second-moment bounds.

For $\lambda<7/8$, the estimate \eqref{eq:estimate-mart-second-moment}, together
with the martingale decomposition in Lemma \ref{lem:martingale-decomposition}, gives
\begin{equation}
	\Var(\cY_{\lambda,N}(t,\phi))\xrightarrow[N\to\infty]{}0\fstop
\end{equation}
For $\lambda>7/8$, set $\lambda=7/8+\eps$, $\eps \in (0,1/8)$. For simplicity of exposition, let us only consider $\phi\equiv 1$; the case with a general test function $0\neq\phi\in \cC_b^+(\R)$ may be dealt with analogously via a $\phi$-dependent tilting, whose details are left to the reader.

Recall \eqref{eq:G-beta-k} and Lemma \ref{lem:two-point-discrepancy}, and observe that, since $\Phi_2\ge 0$, one has, almost surely,
\begin{equation}\label{eq:bound-exponential}
\mathscr E_{\beta_{\lambda,N},2}(T)\ge (\mathscr E_{\beta_{7/8,N},2}(T))^{N^\eps}\comma\qquad T>0\fstop
\end{equation}By the moment representation
\eqref{eq:moment-representation-lambda-specialized} and the bound in \eqref{eq:bound-exponential},  Jensen inequality,  Theorem \ref{th:Dobrushin-k=2} and Proposition \ref{pr:second-moment}, and
\begin{equation}
\mathbf E_{\mathbf 0}^{{\rm BM    }^{\otimes 2}}\bigg[ \exp\set{{\frac12}  L_t^{W^1-W^2} } \bigg]>1\comma
\end{equation}
give
\begin{equation}
	\E[\cY_{\lambda,N}(t,\phi)^2]\ge \mathbf E_{\mathbf 0}^{(\beta_{\lambda,N},2)}\big[\mathscr E_{\beta_{7/8,N},2}(tN)\big]^{N^\eps}\xrightarrow[N\to\infty]{}\infty\fstop
\end{equation}
Since the first moment is constantly equal to one (as we assumed $\phi\equiv 1$), this implies
\begin{equation}
	\Var(\cY_{\lambda,N}(t,\phi))\xrightarrow[N\to\infty]{}\infty\fstop
\end{equation}
The proof is complete.
\end{proof}

\subsection*{Acknowledgments}
We thank Davar Khoshnevisan for pointing out that the original convergence result for mean-zero additive functionals is due to Dobrushin. This material is based upon work supported by the National Science Foundation under Grant No.\ DMS-2424139, while HD was in residence at the Simons Laufer Mathematical Sciences Institute in Berkeley, California, during the Fall 2025 semester. This work was supported in part by the Italian Ministry of Foreign Affairs and International Cooperation, grant number BR26GR05. While
this work was written, FS was associated to INdAM (Istituto Nazionale di
Alta Matematica “Francesco Severi”) and the group GNAMPA.


\end{document}